\newtheorem{theorem}{Theorem}
\newtheorem{lemma}{Lemma}
\newtheorem{definition}{Definition}
\newtheorem{corollary}{Corollary}
\newcommand{\cV}{\mathcal V}
\newcommand{\cS}{\mathcal S}
\newcommand{\fF}{\mathfrak F}
\newcommand{\PG}{\mathrm{PG}\,}
\newcommand{\PGL}{{\mathrm{PGL}}\,}
\newcommand{\rank}{{\mathrm{rank}}\,}
\newcommand{\FF}{{\mathbb F}}
\newtheorem{proposition}[theorem]{Proposition}
\def\rif#1{(\ref{#1})}
\def\eqn#1$$#2$${\begin{equation}\label#1#2\end{equation}}
\def\F{\mathbb F}
\def\Z{\mathbb Z}
\def\cS{{\cal S}}
\def\cV{{\cal V}}
\def\la{\langle}
\def\ra{\rangle}
\DeclareMathOperator*{\opl}{\oplus}
\begin{document}
\title{On subspaces defining linear sets of maximum rank}
\author{Valentina Pepe}
\maketitle

\begin{abstract}
		
		Let $V$ denote an $r$-dimensional $\F_{q^n}$-vector space. Let $U$ and $W$ be $\F_q$-subspaces of $V$ and let $L_U$ and $L_W$ be the projective points of $\PG(V,q^n)$ defined by  $U$ and $W$ respectively. We address the problem of when $L_W=L_U$ under the hypothesis that $U$ and $W$ have maximum dimension, i.e.,  $\dim_{\F_q} W=\dim_{\F_q}U=$ $rn-n $, and we give a complete characterization for $r=2$.
	\end{abstract}

	\section{Introduction}

	Let $\F_{q}$ be the finite field of order $q$, where $q$ is a power of the prime $p$ and let $\F_q^*$ be $\F_q\setminus \{0\}$. Throughout the paper, by dimension we will mean the vectorial dimension.
	Let $V$ be an $r$-dimensional vector space over $\F_{q^n}$ and let $\PG(V,\F_{q^n})=\PG(r-1,q^n)$ be the associated projective space. We will often identify a projective point with one of its representative vectors.
	A point set $L$ of $\PG(V,\F_{q^n})$ is called an \emph{$\F_q$-linear set} of {\it rank} $m$ if it is
	defined by the non-zero vectors of a $m$-dimensional $\F_q$-vector subspace $U$ of $V$, i.e.
	\[L=L_U=\{\la {\bf u} \ra_{\F_{q^n}} \colon {\bf u}\in U\setminus \{{\bf 0} \}\}.\]
	
	By Grassmann's Identity, it can be easily seen that if $m>(r-1)n$ then $L_U=\PG(V,\F_{q^n})$, hence  $L_U$ is a proper subset of $\PG(V,\F_{q^n})$ if and only if its rank is at most $rn-n$. A linear set of rank $rn-n$ is said to be of \textit{maximum rank}.
	
	\medskip
	For a point $P=\la {\bf v} \ra_{\F_{q^n}}\in \PG(V,\F_{q^n})$ the \emph{weight} of $P$ with respect to the linear set $L_U$ is $w_{L_U}(P):=\dim_q(\la {\bf v} \ra_{\F_{q^n}} \cap U)$.
	
	\medskip
	
	The \emph{maximum field of linearity} of an $\F_q$-linear set $L$ is $\F_{q^d}$ if $d \mid n$ is the largest integer such that $L=L_U$ for some $\F_{q^d}$-subspace $U$, cf. \cite[pg.\ 403]{CsMP}.
	
	In  recent years, starting from the paper \cite{Lu1999} by Lunardon, linear sets have been used to construct or characterize various objects in finite geometry, such as blocking sets and multiple blocking sets in finite projective spaces, two-intersection sets in finite projective spaces, translation spreads of the Cayley Generalized Hexagon, translation ovoids of polar spaces, semifield flocks, finite semifields and rank metric codes. For a survey on linear sets, we refer the reader to \cite{OP2010}.
	
	\medskip


It is well known that different vector spaces - even with different dimensions - can define the same linear set.  For example, let $V=\F_{q^4}^2$, $U=\F_{q^2}^2$, $W=$ $\{(x,y), x \in \F_{q^2}, y \in \F_q\}$, then $\dim_{\F_q} U=4$, $\dim_{\F_q} W=3$ and $L_U=L_W \cong \PG(1,q^2)$ embedded in $\PG(1,q^4)$.

Also if we restrict ourselves to vector spaces of the same dimension, we can take  $W=\lambda U$ for some $\lambda \in \F_{q^n}^*$ and we easily get $L_U=L_W$.

In \cite{OP2010}, another way is shown  to get different vector spaces that determine the same linear set when $\dim_{\F_{q^n}} V=2$ and the linear sets have maximum rank. Let $Tr: x\in \F_{q^n}\mapsto x+x^q+\cdots + x^{q^{n-1}}\in \F_q$ the usual trace map, let $V=\{(x,y), x,y \in \F_{q^n}\}$ and let $\mathbf{b}$ the $\F_q$-bilinear skew-symmetric form defined on $V \times V$ such that $\mathbf{b}((x_1,y_1),(x_2,y_2))=Tr(x_1y_2-x_2y_1)$. A projective point $P=(a,b)$ of $\PG(V,q^n)=\PG(1,q^n)$ is an $n$-dimensional vector space $\Pi_P=\{\lambda (a,b), \lambda \in \F_{q^n}\}$ over $\F_q$ and $\Pi_P$ is a maximal totally isotropic space with respect to $\mathbf{b}$. Let $U$ an $\F_{q}$-vector subspace of $V$ of dimension $n$, i.e., $L_U$ has maximum rank and let $\perp$ be the polarity associated to $\mathbf{b}$. Then

$$\dim_{\F_q} \Pi_P \cap U \neq 0 \Leftrightarrow \dim_{\F_q} (\Pi_P + U^{\perp}) \neq 2n \Leftrightarrow \dim_{\F_q} \Pi_P \cap U^{\perp} \neq 0.$$

\noindent Therefore, $L_U=L_{U^{\perp}}$. Also, in the same way, it is easy to see that $w_{L_U}(P)=w_{L_{U^{\perp}}}(P)$.

In \cite{CsMP}, it is proved that for $n \leq 4$, $L_U=L_W$ if and only if $W=\lambda U$ or $W=\lambda U^{\perp}$ for some $\lambda \in \F_{q^n}^*$. Also, If $L_U$ is a club, then  $L_U=L_W$ if and only if $W=\lambda U$ for some $\lambda \in \F_{q^n}^*$. We recall that  a \emph{club} is a linear set $L_U$ of rank $n$ of PG$(1,q^n)$ with one point of weight $n-1$ and hence all the others with weight 1. In our general treatment, we will include a new proof of these two results with our methods.

Finally, in \cite{CsZ}, the authors  show that a particular class of linear sets of $\PG(1,q^n)$, the so called linear sets of \emph{pseudoregulus type}, can be determined by $\F_q$-vector spaces $U$ and $W$ of $V=\F_{q^n}^2$ of dimension $n$ such that $W \neq U^{\perp}$ and such that $U$ and $W$ do not belong to the same orbit under the action of the projective semilinear group $P\Gamma L(2,q^n)$, so $W \neq \lambda U$ for all $\lambda \in \F_{q^n}$. Let $N: x \in \F_{q^n} \mapsto x^{1+q+\cdots +q^{n-1}} \in \F_q$ be the usual norm map and let $L=\{(1,t), t \in \F_{q^n} : N(t)=1\}$. Then $L$ is a linear set of pseudoregulus type and $L=L_U=L_W$, with $U=\{(x,x^q), x \in \F_{q^n}\}$ and $W=\{(y,y^{q^i}), y \in \F_{q^n}\}$ such that $GCD(n,i)=1$ and $i \neq n-1$. So far, the linear sets of pseudoregulus type are the only known linear sets of $\PG(1,q^n)$ that can be determined by vector spaces $U$ and $W$ of the same dimension but not in the same $P\Gamma L(2,q^n)$-orbit and such that $U \neq \lambda W^{\perp}$. In general, a linear set of pseudoregulus type $L_U$ is such that $U=\{\lambda v_0+ \lambda^{q^i} v_1, \lambda \in \F_{q^n}\}$ for some vectors  $v_0,v_1 \in V$ such that $\dim_{\F_{q^n}} \la v_0,v_1\ra=2$ and the maximum field of linearity of $L_U$ is $\F_{q^d}$, with $d=GCD (n,i)$ (\cite{LMPT14}, \cite{CsZ2}).

From now on, let $d\mid n$ and let

$$Tr_{q^n|q^d}: x \in \F_{q^n}\mapsto x+x^{q^d}+\cdots + x^{q^{n-d}}\in \F_{q^d}$$

\noindent and

$$N_{q^n|q^d}:  x \in \F_{q^n}\mapsto x^{1+q^d+\cdots+q^{n-d}}\in \F_{q^d}$$

\noindent the usual trace and norm maps respectively.

We find  new classes of linear sets of $\PG(1,q^n)$, $n \geq 6$ not a prime, that can be determined by $\F_q$-vector spaces $U$ and $W$ of dimension $n$ such that $W \neq \lambda U^{\perp}$ and $W \neq \lambda U$ for any $\lambda \in \F_{q^n}^*$. Let $d$ be a non-trivial divisor of $n$, $g(y),g'(y)\in \F_{q^d}[y]$ be $\F_{q}$-linear maps and $f(x)$ an $\F_{q^d}$-linear map. Let $U'=\{(y,g(y)),y\in \F_{q^d}\}$ $W'=\{(y,g'(y)),y\in \F_{q^d}\}$,  $U=\{(x,f(x)+g(Tr_{q^n|q^d}(ax)),x\in \F_{q^n}\} $ and

\noindent $W=\{(x,f(x)+g'(Tr_{q^n|q^d}(ax)),x\in \F_{q^n}\}$. We will show in Theorem \ref{last} that if  $L_{U'}=L_{W'}$, then $L_U=L_{W}$, even if $W \neq \lambda U$ or $\lambda U^{\perp}$. If $L_{U'}=L_{W'}$ is a linear set of $\PG(1,q^d)$ of pseudoregulus type  we call $L_U=L_W$ a linear set of  \textit{generalized pseudoregulus type}. Let $\perp_d$ be the polarity induced by

\noindent $\mathbf{b}_d:((x_1,y_1),(x_2,y_2))\in \F_{q^d}^2\times \F_{q^d}^2 \mapsto Tr_{q^d|\F_q}(x_1y_2-x_2y_1) \in \F_q$.  If $W'=\lambda U'^{\perp_d}$, then we say that $W$ is the  \textit{generalized perp} of $U$.

\medskip

Our main results are the following.

\begin{theorem}
Let $V=\F_{q^n}^r$, $U$ and $W$ be two $(rn-n)$-dimensional vector spaces of $V$ such that $L_U=L_W$ and  let $\F_q$ be the maximum field of linearity for $L_U=L_W$. Then, for every point $P \in L=L_U=L_W$, we have that $w_{L_U}(P)=w_{L_W}(P)$. If $r=2$ and $\perp$ be the polarity associated to $\mathbf{b}:((x_1,y_1),(x_2,y_2))\in V\times V \mapsto Tr(x_1y_2-x_2y_1) \in \F_q$, then, one of these possibilities must occur:

  \begin{enumerate}
    \item $W=\lambda U$ for some $\lambda \in \F_{q^n}^*$;
    \item $W=\lambda U^{\perp}$ for some $\lambda \in \F_{q^n}^*$;
    \item $n\geq 5$ and $L_U=L_W$ is a linear set of pseudoregulus type;
    \item $n \geq 10$ is not a prime and  $L_U=L_W$ is a linear set of generalized pseudoregulus type;
    \item $n \geq 6$ is not a prime and $W$ is the generalized perp of $U$.
  \end{enumerate}
\end{theorem}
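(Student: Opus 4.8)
The plan is to prove the weight statement first, for all $r$, by induction on $r$, and then to carry out the classification for $r=2$ via the theory of linearized polynomials. For the weight statement, the key observation is that for any $\F_{q^n}$-subspace $\Sigma$ of $\PG(V,q^n)$ with underlying $\F_q$-subspace $H$, one has $L_U\cap\Sigma=L_{U\cap H}$ and $w_{L_{U\cap H}}(P)=w_{L_U}(P)$ for every $P\in\Sigma$, since $\Pi_P\subseteq H$. Applying the identity $\sum_{P\in L_U}(q^{w_{L_U}(P)}-1)=q^{\dim_q U}-1$ to each restriction gives, for every hyperplane $\Sigma$,
\[
\sum_{P\in L\cap\Sigma}\bigl(q^{w_{L_U}(P)}-q^{w_{L_W}(P)}\bigr)=q^{\dim_q(U\cap H)}-q^{\dim_q(W\cap H)}.
\]
Because $U,W$ have maximum rank, $\dim_q(U\cap H)\geq(r-2)n$ with equality precisely when $\Sigma\not\subseteq L$, and the condition $\Sigma\subseteq L$ depends only on the common point set $L$; hence $\dim_q(U\cap H)=\dim_q(W\cap H)=(r-2)n$ whenever $\Sigma\not\subseteq L$, and the right-hand side vanishes. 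For $r\geq3$ every point of $L$ lies on some hyperplane $\Sigma\not\subseteq L$ (as $L$ is proper), and on such a $\Sigma$ the set $L\cap\Sigma$ is a maximum-rank linear set of $\PG(r-2,q^n)$ defined by both $U\cap H$ and $W\cap H$, so the inductive hypothesis (after checking the maximum field of linearity is preserved) forces $w_{L_U}(P)=w_{L_W}(P)$ there. This reduces everything to the base case $r=2$.

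The base case $r=2$ is the delicate point. I would work with the symplectic polarity $\perp$ and the field-reduction picture, in which $U$ and $W$ become $(n-1)$-dimensional subspaces meeting a fixed Desarguesian spread of $\PG(2n-1,q)$ in exactly the same spread elements; the content is that they then meet each such element in the same dimension. Equivalently, writing $U=\{(x,f(x)):x\in\F_{q^n}\}$ and $W=\{(x,g(x)):x\in\F_{q^n}\}$ for linearized polynomials $f,g$, one has $w_{L_U}(\langle(1,s)\rangle)=\dim_q\ker(f-s\,\mathrm{id})$, so the assertion becomes $\dim_q\ker(f-s)=\dim_q\ker(g-s)$ for every $s\in\F_{q^n}$. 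I expect to prove this kernel-dimension identity directly, exploiting $L_U=L_{U^{\perp}}$ and $w_{L_U}=w_{L_{U^{\perp}}}$ from the Introduction together with the equality of value sets $\{f(x)/x\}=\{g(x)/x\}$.

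With the kernel-dimension identity available, the $r=2$ classification becomes a problem about pairs of linearized polynomials $f,g$ over $\F_{q^n}$ with $\dim_q\ker(f-s)=\dim_q\ker(g-s)$ for all $s$ and with $\F_q$ as maximum field of linearity. I would analyse this in the Ore ring of $q$-polynomials, where $W=\lambda U$ corresponds to $g(x)=\lambda f(\lambda^{-1}x)$ and $W=\lambda U^{\perp}$ to the adjoint of $f$ with respect to the trace form; these are always present and give cases (1) and (2). The heart of the matter is to show that any further coincidence forces extra structure governed by a proper subfield $\F_{q^d}$, $d\mid n$: detecting this $\F_{q^d}$, peeling it off through the decomposition $U=\{(x,f_0(x)+g_0(Tr_{q^n\mid q^d}(ax)))\}$ of the Introduction, and reducing to a pseudoregulus configuration over $\F_{q^d}$ then produces cases (3)–(5), with the thresholds $n\geq5$ and $n\geq6$ or $n\geq10$ non-prime appearing exactly as the ranges in which these families are genuinely new rather than collapsing into (1) or (2).

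The main obstacle is precisely this last structural classification: proving the list exhaustive, i.e.\ that equal kernel-dimension spectra for $f$ and $g$ leave no exotic pairs beyond the five families, and that the subfield $\F_{q^d}$ can always be extracted whenever $W\neq\lambda U$ and $W\neq\lambda U^{\perp}$. This is a rigidity statement for $\F_q$-linear maps sharing the same $\F_{q^n}$-eigenvalue multiplicities, and controlling it — ruling out spurious solutions and correctly pinning the numerical thresholds — is where I expect the real work to lie.
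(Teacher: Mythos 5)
Your proposal is a plan rather than a proof, and both of its load-bearing steps are left open. First, the weight statement. Your reduction of the general-$r$ case to $r=2$ by hyperplane sections is legitimate (the paper performs an analogous reduction to lines inside the proof of Theorem \ref{singular}), though you would need to drop the maximum-field-of-linearity hypothesis from the inductive statement, since it need not be inherited by $L\cap\Sigma$; the paper's Theorem \ref{main1} indeed proves the weight equality without that hypothesis. But the base case $r=2$ is exactly where the content lies, and there you offer only the intention to ``prove this kernel-dimension identity directly'' from $L_U=L_{U^{\perp}}$ and the equality of the value sets $\{f(x)/x\}=\{g(x)/x\}$. Those two facts do not suffice: the value-set equality only says that $p_A(\lambda)=\det\bigl(A-\mathrm{diag}(\lambda,\lambda^q,\ldots,\lambda^{q^{n-1}})\bigr)$ and $p_B(\lambda)$ have the same root set, while the weight of a point is the \emph{multiplicity} of the corresponding root, and two split polynomials of equal degree with equal root sets can have different multiplicities. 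The paper closes this gap with genuinely new input: the multiplicity of a weight-$k$ point on the determinantal hypersurface $X(U)$ is exactly $1+q+\cdots+q^{k-1}$ (Theorem \ref{singular}, resting on Theorem \ref{rank} and \cite{CS}), the sum of these multiplicities equals $\deg F$ (Corollary \ref{rational}), and a minimal-degree/Nullstellensatz argument then forces $G=\lambda F$. None of this is present or replaced in your outline.

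Second, the classification for $r=2$. You correctly identify it as a rigidity problem for pairs of linearized polynomials with equal eigenvalue multiplicities, but you explicitly defer the exhaustiveness of the five families to ``where I expect the real work to lie''. That deferred work is the entire mechanism of the paper: equal weights at all points translates (Theorem \ref{main3}) into the associated Dickson matrices having equal corresponding principal minors, which unlocks the Hartfiel--Loewy and Loewy diagonal-similarity theorems (Theorems \ref{sim1} and \ref{sim2}) to produce cases (1) and (2); the residual cases then come from classifying Loewy-reducible Dickson matrices (Theorem \ref{classification}, Proposition \ref{divisor}, Theorems \ref{pseudo} and \ref{last}), which is where the subfield $\F_{q^d}$, the pseudoregulus case and the generalized constructions actually emerge, together with the thresholds on $n$. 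Your Ore-ring framing contains no substitute for the principal-minor translation or for the Loewy-reducibility analysis, so the proposal does not establish the theorem.
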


 \begin{theorem}
 Let $\dim_{\F_{q^n}}V >2$.
Let $U$ and $W$ be $\F_q$-vector spaces such that $\dim_{\F_q} U=\dim_{\F_q}W=(r-1)n$ and $L_U = L_W$ in $\PG(r-1,q^n)$. Let $U_0$ and $W_0$ be the intersection of $U$ and $W$ respectively with a line $\ell$ of $\PG(r-1,q^n)$, such that $\dim_{\F_q} U_0 = \dim_{\F_q} W_0 =n$ and $L_{U_0} = L_{W_0}$ is a linear set of pseudoregulus type with maximum field of linearity $\F_q$. Then either $W=\lambda U$ for some $\lambda \in \F_{q^n}^*$, or $L_U=L_W$ is cone with vertex a codimension 2 subspace of $\PG(r-1,q^n)$ disjoint from $\ell$ and base $L_{U_0}=L_{W_0}$.
\end{theorem}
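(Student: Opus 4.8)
The plan is to recast the cone alternative as a condition on the largest $\F_{q^n}$-subspace contained in $U$, to produce a common vertex for $U$ and $W$ out of the weight--preservation part of Theorem~1, and then to settle the remaining \emph{vertex-free} case by induction on $r$. Let $\Lambda$ be the $2$-dimensional $\F_{q^n}$-subspace with $\ell=\PG(\Lambda,q^n)$, so $U_0=U\cap\Lambda$ and $W_0=W\cap\Lambda$, and for an $(r-1)n$-dimensional $\F_q$-subspace $U$ set $M_U=\{v\in U:\F_{q^n}v\subseteq U\}$, its largest $\F_{q^n}$-subspace. A dimension count shows that $L_U$ is a cone with vertex a codimension $2$ subspace $\PG(M,q^n)$ disjoint from $\ell$ and base $L_{U_0}$ precisely when $U=M\oplus U_0$ with $\dim_{\F_{q^n}}M=r-2$ and $M\cap\Lambda=0$; equivalently, when $\dim_{\F_{q^n}}M_U=r-2$. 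Hence the theorem reduces to the dichotomy: either $\dim_{\F_{q^n}}M_U=r-2$, or $W=\lambda U$.

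The key step is that $M_U$ and $M_W$ can be located by weights. Since $\langle v\rangle_{\F_{q^n}}\cap U=\langle v\rangle_{\F_{q^n}}$ holds exactly when $v\in M_U$, the points of maximal weight $n$ in $L_U$ are precisely those of $\PG(M_U,q^n)$, and similarly for $W$. By the first assertion of Theorem~1 the weight functions of $L_U$ and $L_W$ agree at every point, so $\PG(M_U,q^n)=\PG(M_W,q^n)$ and thus $M_U=M_W=:\Pi^\ast$. Moreover $\Pi^\ast\cap\Lambda$ is an $\F_{q^n}$-subspace contained in $U_0$; as $L_{U_0}$ is a pseudoregulus, with more than one point, $U_0$ contains no $\F_{q^n}$-line, whence $\Pi^\ast\cap\Lambda=0$. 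If $\dim_{\F_{q^n}}\Pi^\ast=r-2$ we are in the cone case. Otherwise I would pass to $\overline V=V/\Pi^\ast$: because $\Pi^\ast\subseteq U\cap W$ is $\F_{q^n}$-stable and meets $\Lambda$ trivially, the images $\overline U,\overline W$ attain the maximal rank in $\PG(r-t-1,q^n)$ (with $t=\dim_{\F_{q^n}}\Pi^\ast$), one has $L_{\overline U}=L_{\overline W}$, the line $\overline\ell$ still carries the pseudoregulus $L_{\overline{U_0}}$ with maximum field of linearity $\F_q$, and by maximality of $\Pi^\ast$ one gets $M_{\overline U}=M_{\overline W}=0$. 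Since $\Pi^\ast$ lies in both spaces, $\overline W=\lambda\overline U$ would give $W=\lambda U$, so it suffices to treat the vertex-free case $M_U=M_W=0$, $r\ge3$, and prove $W=\lambda U$.

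For the vertex-free statement I would induct on $r$. When $r\ge4$, choose a hyperplane $H\supseteq\ell$ in general position so that $U_H:=U\cap\Lambda_H$ and $W_H:=W\cap\Lambda_H$ both attain the maximal dimension $(r-2)n$; then $L_{U_H}=L_{W_H}$ in $H\cong\PG(r-2,q^n)$, the base on $\ell$ is unchanged, and $M_{U_H}=M_U\cap\Lambda_H=0$ (any $\F_{q^n}$-subspace of $U_H$ lies in $M_U$), so the inductive hypothesis yields $W_H=\lambda_H U_H$. Restricting this equality to $\Lambda$ gives $W_0=\lambda_H U_0$, and since the scaling stabiliser of the pseudoregulus space $U_0$ is exactly $\F_q^\ast$, all the $\lambda_H$ coincide modulo $\F_q^\ast$. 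Fixing a representative $\lambda$, we obtain $W\cap\Lambda_H=\lambda_H U_H=\lambda(U\cap\Lambda_H)=(\lambda U)\cap\Lambda_H$ for every such $H$; as these $\Lambda_H$ cover $V$, this forces $W=\lambda U$.

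The genuine base case, and the main obstacle, is $r=3$, where the only hyperplane through $\ell$ is $\ell$ itself and no reduction by sections is available. Here one must argue directly: writing, in suitable coordinates, $U=U_0\oplus\{(\phi(z),\psi(z),z):z\in\F_{q^n}\}$ and $W$ similarly, with $U_0=\{(\lambda,\lambda^{q^i}):\lambda\in\F_{q^n}\}$ and $W_0=\{(\mu,\mu^{q^j}):\mu\in\F_{q^n}\}$, $\gcd(i,n)=\gcd(j,n)=1$, the equality $L_U=L_W$ becomes a system of identities between $\F_q$-linearised maps. The crux is to show that $M_U=0$ together with the scatteredness of $L_{U_0}$ rules out every exotic solution and forces the maps defining $W$ to be a single common $\F_{q^n}$-multiple of those defining $U$, i.e. $W=\lambda U$; this rigidity is exactly what the coprimality conditions $\gcd(i,n)=\gcd(j,n)=1$, equivalent to the maximum field of linearity being $\F_q$, are there to supply.
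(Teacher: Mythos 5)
Your structural reductions are sensible and essentially correct: identifying the common vertex $M_U=M_W$ as the locus of weight-$n$ points via the weight-preservation part of Theorem \ref{main1}, checking $M_U\cap\Lambda=0$ because a pseudoregulus contains no $\F_{q^n}$-line, quotienting by the vertex, and running the hyperplane-section induction for $r\ge 4$ (where the cone alternative for the section is excluded because $M_{U_H}=0$ while a cone in $\PG(r-2,q^n)$ would need a vertex of dimension $r-3\ge 1$). This is a genuinely different organisation from the paper, which does not separate out a vertex at all: it reduces immediately to $r=3$, writes $U=\{(x,y,f_0(x)+f_1(y))\}$, $W=\{(x,y,g_0(x)+g_1(y))\}$ with $f_0(x)=ax^{q^i}$, $g_0(x)=bx^{q^j}$, and applies the mixed principal-minor criterion of Theorem \ref{main2} to the matrices obtained by replacing one column of the Dickson matrix of $f_0$ (resp.\ $g_0$) by the corresponding column for $f_1$ (resp.\ $g_1$); because the Dickson matrix of a monomial $ax^{q^i}$ with $\gcd(i,n)=1$ is a weighted permutation matrix whose only nonzero principal minor is the full determinant, the surviving mixed minors isolate the individual coefficients $a_h$, $b_h$ of $f_1$, $g_1$ and force either $f_1=g_1=cy$ (the cone, when $i\ne j$) or $f_1=g_1$ with $i=j$ (giving $W=\lambda U$ after normalising $b=a$).

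The problem is that your argument stops exactly where the real work begins. The base case $r=3$ with $M_U=M_W=0$ is the entire substantive content of the theorem -- it is precisely the case the paper's proof computes, with everything else dismissed in one sentence -- and you do not prove it: you write down the coordinate form, state that ``one must argue directly'', and assert that the coprimality conditions ``supply'' the needed rigidity. Nothing in your reductions produces the required identities between the linearised maps $\psi$ defining $U$ and those defining $W$; you need some mechanism (in the paper, Theorem \ref{main2} applied to the mixed Dickson minors, exploiting the permutation structure coming from $\gcd(i,n)=1$) that converts $L_U=L_W$ into coefficientwise equalities, and that mechanism is absent. Until that step is supplied, the proposal establishes only that the theorem for general $r$ follows from its $r=3$ instance, not the theorem itself. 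A secondary, smaller gap: in the induction step you assert that the good hyperplanes $\Lambda_H$ (those with $U+\Lambda_H=W+\Lambda_H=V$) cover $V$, so that the sectionwise equalities $W\cap\Lambda_H=(\lambda U)\cap\Lambda_H$ glue to $W=\lambda U$; this needs a short argument that every nonzero $w\in W$ lies on at least one good hyperplane through $\ell$.
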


We also give a few examples of $\F_q$-vector spaces $U$ and $W$ such that   $\dim_{\F_q} U=\dim_{\F_q}W$, $L_U = L_W$ in $\PG(r-1,q^n)$ and $W \neq \lambda U$ for any $\lambda \in \F_{q^n}^*$, but it is still an open problem whether these examples  are the only possible constructions with that property.

The paper is organized as follows. In Section 2 we will introduce the geometric and algebraic setting, in particular the so-called cyclic representation of PG$(r-1,q^n)$ with the associated Desarguesian spread, and how to use Dickson matrices to prove our results. In Section 3 we will characterize Dickson matrices having equal corresponding principal minors. In Section 4 we will present our main results.

\section{The geometric and algebraic setting}

Let us start by describing the geometric setting we adopt to study $\FF_q$--linear sets of $\PG(V,\F_{q^n})= \PG(r-1,q^n)$ (see \cite{Lu1999} and \cite{giuzzipepe15}).
	
	Regarding the $r$-dimensional $\FF_{q^n}$-vector space $V$ as an $\FF_q$--vector space of dimension $rn$, the points of $\PG(r-1,q^n)$ correspond to a partition of $\PG(rn-1,q)$ into $(n-1)$-dimensional projective subspaces. Such a partition, say $\mathcal S$, is called a \textit{Desarguesian spread} of $\PG(rn-1,q)$ and the pair $(\PG(rn-1,q),{\mathcal S})$ is said to be the $\FF_q$-\textit{linear representation} of $\PG(r-1,q^n)$. In this setting, an $\FF_q$-linear set $L_U$ of $\PG(r-1,q^n)$ is the subset of $\mathcal{S}$ consisting of the elements with non-empty intersection with the projective subspace $\PG(U,\FF_q)$ of $\PG(rn-1,q)$ defined by $U$.
	
	Consider the following \emph{cyclic representation} of $\PG(rn-1,q)$ in $\PG(rn-1,q^n)$. Let $\PG(rn-1,q^n)= \PG(V',\F_{q^n})$, where $V'=\F_{q^n}^{rn}$ and let ${\mathbf e}_i$ be the $i$-th canonical basis element of $V'$. Let $\sigma$ be the $\F_q$-semilinear map defined by the rule ${\mathbf e}_i\mapsto {\mathbf e}_{i+r}$, where the subscripts are taken mod $rn$ and with accompanying automorphism $x\in\F_{q^n}\mapsto x^q\in\F_{q^n}$. Then $\sigma$ has order $n$ and the $\FF_q$-vector subspace
$$\mathrm{Fix}\,\sigma=\{(\mathbf{x},\mathbf{x}^q,\ldots,\mathbf{x}^{q^{n-1}}),\mathbf{x}=x_0,x_1,\ldots,x_{r-1} \mbox{ and }x_i \in \FF_{q^n}\}$$
 of $V'$ defines a set of points of $\PG(rn-1,q^n)$  fixed by $\sigma$, i.e. a subgeometry $\Sigma=\PG(rn-1,q)$. The elements of $\mathcal{S}$ are the subspaces $\Pi_P:=\langle P,P^{\sigma},\ldots,P^{\sigma^{n-1}} \rangle \cap \Sigma$, with $P\in \Pi_0 \cong \PG(r-1,q^n)$ and $\Pi_0=\PG(V_0,\FF_{q^n})$, where

 \noindent $V_0:=\{({\mathbf x}, {\mathbf 0},\ldots,{\mathbf 0})\colon {\mathbf x}=x_0,\ldots,x_{r-1} \mbox{ and }x_i \in \FF_{q^n}\}$ (see \cite{Lu1999}). Let $\Pi_i=\PG(V_i,\FF_{q^n})$ be $\Pi_0^{\sigma^i}$, where $V_i=\langle \mathbf{e}_{h+ir}, h=0,1,\ldots,r-1\rangle$. In the following, we shall identify a point $P$ of $\Pi_0= \PG(r-1,q^n)$ with the spread element $\Pi_P$. We observe that $P$ is just the projection of $\Pi_P$ from $\langle \Pi_1,\Pi_{2},\ldots,\Pi_{n-1}\rangle$ on $\Pi_0$. If $L_U$ is a linear set of rank $m$, then it is induced by the projective subspace $\PG(U,\FF_q) \subset \Sigma$ and it can be viewed both as the subset of $\Pi_0$ that is the projection of $\PG(U,\F_q)$ from $\langle \Pi_1,\Pi_{2},\ldots,\Pi_{n-1}\rangle$ on $\Pi_0$, as well as the subset of $\mathcal{S}$ consisting of the elements $\Pi_P$ such that $\Pi_P \cap \PG(U,\FF_q) \neq \emptyset$.  We stress that we have defined the subspaces $\PG(U,\FF_q)$ and $\Pi_P$ as subspaces of $\Sigma=\PG(rn-1,q)$. In particular we will denote by $U(\F_{q^n})$ the span of $U$ over $\F_{q^n}$. Since $U(\F_{q^n})$ is spanned by vectors fixed by $\sigma$, $U(\F_{q^n})^{\sigma}=U(\F_{q^n})$ and hence $\dim_{\F_{q^n}} U(\F_{q^n})=\dim_{\F_q}U$ (see, e.g., \cite[Lemma 1]{Lu1999}). Analogously,

$$(\langle P,P^{\sigma},\ldots,P^{\sigma^{n-1}} \rangle \cap U(\F_{q^n}))^{\sigma}= \langle P,P^{\sigma},\ldots,P^{\sigma^{n-1}} \rangle \cap U(\F_{q^n})$$

so

\begin{equation}\label{dimension}
\dim_{\F_{q^n}}\langle P,P^{\sigma},\ldots,P^{\sigma^{n-1}} \rangle \cap U(\F_{q^n})=\dim_{\F_q} \Pi_P \cap U
\end{equation}

	Note that, if a point $P$ has weight $i$ in $L_U$ then \[\dim_{\F_q}\left( \Pi_P\cap U \right)= i.\]
	
	\medskip
	
	The image under the Grassmann embedding $\varepsilon_n$ of a Desarguesian spread $\cS$ of $\PG(rn-1,q)$ determines the algebraic variety $\cV_{rn} \subset \PG(r^n-1,q)$ (see \cite{giuzzipepe15}).
	In more details, put $P=\langle (x_0,x_1,\ldots,x_{r-1},0,0,\ldots,0)\rangle$, then the Grassmann embedding of $\Pi_P$ is the point defined by the vector of the minors of order $n$ of

	\[\left(
	\begin{array}{ccccccccccccc}
		x_0 & x_1 &\cdots & x_{r-1} & 0 & 0 & \cdots & \cdots & \cdots &0 & 0 &\cdots & 0  \\
		0 & 0 &\cdots  & 0 & x_0^q & x_1^q & \cdots & x_{r-1}^q& \cdots & \cdots &\cdots & \cdots  & \cdots\\
		\cdots & \cdots & \cdots & \cdots & \cdots & \cdots & \cdots & \cdots & \cdots & \cdots& \cdots & \cdots & \cdots\\
		0 & 0 & \cdots & 0 & 0 & 0 & \cdots & 0 & \cdots &  x_0^{q^{n-1}}& x_1^{q^{n-1}} &\cdots &  x_{r-1}^{q^{n-1}} \\
	\end{array}
	\right).\]
	
	If we disregard the identically zero minors, then the Grassmann embedding of the Desarguesian spread $\mathcal{S}$, i.e. $\cV_{rn}$,   is the image of the map
	\[ \alpha: (x_0,\ldots,x_{r-1})\in\PG(r-1,q^n)\mapsto
	 \left(\prod_{i=0}^{n-1} x_{f(i)}^{q^i}\right)_{f \in \fF}\in \PG(r^n-1,q)
	\subset \PG(r^n-1,q^n),\]
	where $\fF=\{ f: \{0,\ldots, n-1\}\to\{0,\ldots,r-1\} \}$.
	Here, $\alpha$ is the map that makes the following diagram commute:
	$$\begin{tikzcd}[row sep=large]
		\PG(r-1,q^n)\arrow[r, dotted, "\alpha"] \arrow[d ,"\text{$\FF_q$-linear repres.}"'] &\PG(r^n-1,q)  \\
		\mbox{\begin{minipage}{6cm}\begin{centering}
					$\PG(rn-1,q)$ \\ $\cS =$ {Desarguesian Spread} \end{centering}
		\end{minipage}}
		\arrow[ru, ->, "\varepsilon_n"'] & \\
	\end{tikzcd}
	.$$
	For more details see \cite{giuzzipepe15}.
	
	Let $\mathbf{x}^{(i)}:=(x_0^{(i)},x_1^{(i)},\ldots,x_{r-1}^{(i)})\in \mathbb{F}^r$, where $\mathbb{F}$ is any field. Then the Segre variety
	
	\[\displaystyle \Sigma_{rn}:=\underbrace{\PG(r-1,\mathbb{F})\otimes \PG(r-1,\mathbb{F})\otimes \cdots \otimes \PG(r-1,\mathbb{F})}_{n \text{ times}}\subset \PG(r^n-1,\mathbb{F})\]
	
	is the image of the map:
	
	\[s:\underbrace{\PG(r-1,\mathbb{F})\times \PG(r-1,\mathbb{F})\times \cdots \times \PG(r-1,\mathbb{F})}_{n \text{ times}}\longrightarrow \PG(r^n-1,\mathbb{F})\]

\[(\mathbf{x}^{(0)},\mathbf{x}^{(1)},\ldots,\mathbf{x}^{(n-1)}) \mapsto  \left(\prod_{i=0}^{n-1} x_{f(i)}^{(i)}\right)_{f \in \fF} .\]
	
	Let $\mathcal{T}$ be the collection of subspaces of $\PG(rn-1,q^n)$ of type $\la P_0, P_1,\ldots,P_{n-1}\ra$ with $P_i=\la v_i \ra_{\F_{q^n}} \in \Pi_i$, then $\Sigma_{rn}$ is the Grassmann embedding of $\mathcal{T}$. that is, for $v_i \in V_i \setminus \{\mathbf{0}\}$, by abuse of notation, we write $v_0\wedge v_1\wedge \cdots \wedge v_{n-1}=v_0\otimes v_1 \otimes \cdots \otimes v_{n-1}$.
	
	Hence we observe that $\cV_{rn}$ is the subvariety  of $\Sigma_{rn}$ fixed by the semilinear map \[\hat\sigma:\ v_0\otimes v_1 \otimes \cdots \otimes v_{n-1}\mapsto v_{n-1}^{\sigma}\otimes v_0^{\sigma} \otimes \cdots \otimes v_{n-2}^{\sigma},\] with $\la v_i \ra_{\F_{q^n}} \in \Pi_i \cong \PG(r-1,q^n)$.
	
	We recall the following result, which we will explain below.
	
	\begin{theorem}\cite{giuzzipepe15}
		The Grassmann embedding of a linear set $L_U$ of rank $m$ of $\PG(r-1,q^n)$ is the intersection of $\cV_{rn}$
		with a linear subspace.
		In particular, if the rank of $L_U$ is maximum, then the image of the linear set is a hyperplane section of $\cV_{rn}$.
	\end{theorem}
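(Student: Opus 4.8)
The plan is to recognize $\varepsilon_n(L_U)$ as a special Schubert-type condition on the Grassmannian and to express that condition by linear equations in the Plücker coordinates. Regard $V$ as an $\F_q$-vector space of dimension $rn$, so that every spread element $\Pi_P\in\cS$ is an $n$-dimensional $\F_q$-subspace and $U$ is an $m$-dimensional one. By the description in this section, $L_U$ consists of those $\Pi_P$ with $\Pi_P\cap U\neq\{\mathbf 0\}$, while $\cV_{rn}=\varepsilon_n(\cS)$. Thus I must show that the locus $\{\varepsilon_n(\Pi_P):\Pi_P\cap U\neq\{\mathbf 0\}\}$ is the intersection of $\cV_{rn}$ with a linear subspace of the ambient $\PG(\binom{rn}{n}-1,q)$, and that it is a hyperplane section when $m=rn-n$.

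First I would fix an $\F_q$-complement $B$ of $U$ in $V$, of dimension $rn-m$, and let $\pi\colon V\to B$ be the projection with kernel $U$. Since $L_U$ is a proper linear set we have $m\le rn-n$, hence $\dim_{\F_q}B\ge n$. Writing a spread element as $\Pi_P=\langle s_1,\ldots,s_n\rangle$, one has $\Pi_P\cap U=\ker(\pi|_{\Pi_P})$, so $\Pi_P\cap U\neq\{\mathbf 0\}$ if and only if $\pi|_{\Pi_P}$ is not injective, i.e. if and only if $\pi(s_1),\ldots,\pi(s_n)$ are linearly dependent in $B$. Passing to the $n$-th exterior power, $\bigwedge^n\pi\colon\bigwedge^n V\to\bigwedge^n B$ is an $\F_q$-linear map with $\bigwedge^n\pi(s_1\wedge\cdots\wedge s_n)=\pi(s_1)\wedge\cdots\wedge\pi(s_n)$, and this vector vanishes precisely when the $\pi(s_i)$ are dependent. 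Therefore $\Pi_P\in L_U$ if and only if the Plücker point $\varepsilon_n(\Pi_P)=\langle s_1\wedge\cdots\wedge s_n\rangle$ lies in $\ker\bigl(\bigwedge^n\pi\bigr)$.

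The key point is that the components of $\bigwedge^n\pi$ are linear forms in the Plücker coordinates: choosing a basis of $B$, each coordinate of $\bigwedge^n\pi(\cdot)$ in $\bigwedge^n B$ is an explicit $\F_q$-linear combination of the coordinates on $\bigwedge^n V$. Hence $\ker\bigl(\bigwedge^n\pi\bigr)$ defines a linear subspace $\Lambda$ of $\PG(\binom{rn}{n}-1,q)$, and by the equivalence above $\varepsilon_n(L_U)=\cV_{rn}\cap\Lambda$ exactly, not merely up to inclusion. Because $\cV_{rn}$ already lies in the coordinate subspace $\PG(r^n-1,q)$ spanned by the block-transversal Plücker coordinates (the only minors of the defining matrix that are not identically zero), one may replace $\Lambda$ by $\Lambda\cap\PG(r^n-1,q)$, a linear subspace of $\PG(r^n-1,q)$; this proves the first assertion.

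For the maximum rank case $m=rn-n$ one has $\dim_{\F_q}B=n$, so $\bigwedge^n B$ is one-dimensional and $\bigwedge^n\pi$ is a single linear form. Thus $\Lambda$ is a hyperplane, and $\varepsilon_n(L_U)$ is a hyperplane section of $\cV_{rn}$; the section is proper since a linear set of maximum rank is a proper subset of $\PG(r-1,q^n)$, so the defining form does not vanish identically on $\cV_{rn}$. The main obstacle I anticipate is the bookkeeping needed to verify that the exterior-power map is genuinely linear in the Plücker coordinates and that the resulting section descends correctly to the coordinate subspace $\PG(r^n-1,q)$ on which $\cV_{rn}$ lives; once the projection $\pi$ is fixed, the remainder is formal. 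This argument is the natural reformulation of the classical fact that the special Schubert variety of $n$-subspaces meeting a fixed subspace of complementary dimension is a hyperplane section of the Grassmannian.
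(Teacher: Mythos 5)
Your argument is correct and is essentially the paper's own: the paper realizes the incidence condition $\Pi_P\cap U\neq\{\mathbf 0\}$ as the vanishing of the determinant (or the drop in rank) of the matrix obtained by stacking a basis of $U$ on top of the block rows representing $\Pi_P$, and the Laplace expansion of that determinant along the bottom $n$ rows is exactly your linear form $\bigwedge^n\pi$ evaluated on the Pl\"ucker vector, with the cofactors of the $U$-block as coefficients. Your exterior-power-of-a-projection phrasing is just the coordinate-free version of that stacked-determinant computation, including the same observation that for $m=rn-n$ one gets a single (nonzero) linear form, hence a hyperplane section.
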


	Let $B$ be the $m\times rn$  matrix whose rows are an $\FF_q$-basis of $U$. It follows that the Grassmann embedding of the linear set $L_U$ is the algebraic variety that is also the linear section of $\mathcal{V}_{rn}$ defined by the condition that the rank of

	\[\left(
	\begin{array}{ccccccccccccc}
		&  &  &  &  &  & B &  &  &  &  &  &  \\
		x_0 & x_1 &\cdots & x_{r-1} & 0 & 0 & \cdots & \cdots & \cdots &0 & 0 &\cdots & 0  \\
		0 & 0 &\cdots  & 0 & x_0^q & x_1^q & \cdots & x_{r-1}^q& \cdots & 0 & 0 & \cdots  & 0 \\
		\cdots & \cdots & \cdots & \cdots & \cdots & \cdots & \cdots & \cdots & \cdots & \cdots& \cdots & \cdots & \cdots\\
		0 & 0 & \cdots & 0 & 0 & 0 & \cdots & 0 & \cdots &  x_0^{q^{n-1}}& x_1^{q^{n-1}} &\cdots &  x_{r-1}^{q^{n-1}} \\
	\end{array}
	\right)\]
	is less than $m+n$. In particular, if $L_U$ has maximum rank, that is $\dim U=rn-n$, then $L_U$ is the hyperplane section of $\cV_{rn}$ defined by

{\small \begin{equation}\det\left(
	\begin{array}{ccccccccccccc}
		&  &  &  &  &  & B &  &  &  &  &  &  \\
		x_0 & x_1 &\cdots & x_{r-1} & 0 & 0 & \cdots & \cdots & \cdots &0 & 0 &\cdots & 0  \\
		0 & 0 &\cdots  & 0 & x_0^q & x_1^q & \cdots & x_{r-1}^q& \cdots & 0 & 0 & \cdots  & 0 \\
		\cdots & \cdots & \cdots & \cdots & \cdots & \cdots & \cdots & \cdots & \cdots & \cdots& \cdots & \cdots & \cdots\\
		0 & 0 & \cdots & 0 & 0 & 0 & \cdots & 0 & \cdots &  x_0^{q^{n-1}}& x_1^{q^{n-1}} &\cdots &  x_{r-1}^{q^{n-1}} \\
	\end{array}
	\right)=0.\end{equation}}
	
	\bigskip

From now on, we will always assume that $\dim_{\F_q} U=rn-n$.

Let $X(U)$ be the determinantal variety linear section of $\cV_{rn}$ which is also the Grassmann embedding of the spread elements with non-empty intersection with $\PG(U,q)$. Hence, there is a one-to-one correspondence between the points of $L_U$ and $X(U)$ and by abuse of notation we will denote by the same letter the point of $L_U$ and its image in $X(U)$. We can also write $X(U)=V(F)$, where $F \in \F_{q^n}[x_0,x_1,\ldots,x_{n-1}]$ is a homogeneous polynomial of degree $1+q+\cdots + q^{n-1}$.

By $X(U(\F_{q^n}))$ we denote the hyperplane section of $\Sigma_{rn}$  which is the Grassmann embedding of the elements of $\mathcal{T}$ with non-empty intersection with $\PG(U(\F_{q^n}),q^n)$, hence $X(U(\F_{q^n}))=V(F(\mathbf{x}^{(0)},\mathbf{x}^{(1)},\ldots,\mathbf{x}^{(n-1)}))$, with $\mathbf{x}^{(i)}:=(x_0^{(i)},x_1^{(i)},\ldots,x_{r-1}^{(i)})\in \mathbb{F}_{q^n}^r$ and

\[F=\det\left(
	\begin{array}{ccccccccccccc}
		&  &  &  &  &  & B &  &  &  &  &  &  \\
		x_0^{(0)} & x_1^{(0)} &\cdots & x_{r-1}^{(0)} & 0 & 0 & \cdots & \cdots & \cdots &0 & 0 &\cdots & 0  \\
		0 & 0 &\cdots  & 0 & x_0^{(1)} & x_1^{(1)} & \cdots & x_{r-1}^{(0)}& \cdots & 0 & 0 & \cdots  & 0 \\
		\cdots & \cdots & \cdots & \cdots & \cdots & \cdots & \cdots & \cdots & \cdots & \cdots& \cdots & \cdots & \cdots\\
		0 & 0 & \cdots & 0 & 0 & 0 & \cdots & 0 & \cdots &  x_0^{(n-1)}& x_1^{(n-1)} &\cdots &  x_{r-1}^{(n-1)} \\
	\end{array}
	\right).\]
	
	\bigskip

\begin{theorem}\label{singular}
If $P$ is a point of weight $k\geq 2$ in $L_U$, then its image on $X(U)$ is a singular point of multiplicity $1+q+\cdots +q^{k-1}$.
\end{theorem}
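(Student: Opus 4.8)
The plan is to read the multiplicity of $X(U)=V(F)$ at $P=\la\mathbf v\ra$ as the order of vanishing of $F$ along $\cV_{rn}$, and to exploit the Frobenius grading that the cyclic representation forces on this order. Since $\cV_{rn}$ is smooth at $P$ (being the image of the immersion $\alpha$), this multiplicity equals the order of vanishing of $F\circ\alpha$ at $\mathbf v$. I would first regard $F$ as the multilinear determinant in the Segre slots $\mathbf x^{(0)},\dots,\mathbf x^{(n-1)}$ on $\Sigma_{rn}$, and then restrict to $\cV_{rn}$ via $\mathbf x^{(i)}=\mathbf x^{q^i}$. Expanding around $\mathbf v$ by multilinearity, a first-order perturbation in slot $i$ enters as $\mathbf y^{q^i}$ in the local coordinate $\mathbf y$, so $F\circ\alpha=\sum_{S\subseteq\{0,\dots,n-1\}}G_S$, where $G_S$ is homogeneous in $\mathbf y$ of degree $\sum_{i\in S}q^i$. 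These weighted degrees are pairwise distinct (they are exactly the integers with $0/1$ digits in base $q$), so there is no cancellation between distinct $S$, and the multiplicity equals $\min\{\sum_{i\in S}q^i:\ G_S\not\equiv 0\}$.

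The next step is to translate $G_S\not\equiv0$ into linear algebra. By Laplace expansion along the $B$-rows, $G_S\not\equiv0$ holds if and only if the $n-|S|$ vectors $P^{\sigma^i}$ ($i\notin S$), which lie in the blocks $V_i$, can be completed by vectors drawn from the blocks $V_i$ ($i\in S$) to a direct complement of $U(\F_{q^n})$ in $V'$; in particular this forces $\la P^{\sigma^i}:i\notin S\ra\cap U(\F_{q^n})=0$. By (\ref{dimension}) the space $K:=\la P,P^\sigma,\dots,P^{\sigma^{n-1}}\ra\cap U(\F_{q^n})$ has dimension $k$, and intersecting $K$ with the dimension-$(n-|S|)$ subspace $\la P^{\sigma^i}:i\notin S\ra$ inside $\la P,\dots,P^{\sigma^{n-1}}\ra$ gives dimension at least $k-|S|$. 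Hence $G_S\not\equiv0$ implies $|S|\ge k$. Since $\{0,\dots,k-1\}$ is the unique subset of size $\ge k$ minimizing $\sum_{i\in S}q^i$, this already yields multiplicity $\ge 1+q+\cdots+q^{k-1}$.

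The heart of the proof is the reverse inequality: showing that the single term $G_{S_0}$ with $S_0=\{0,\dots,k-1\}$ is nonzero, which is what pins the multiplicity to the repunit $1+q+\cdots+q^{k-1}$ rather than to some larger sum of $q$-powers. By the description above, $G_{S_0}\not\equiv0$ reduces to two facts: (i) $\la P^{\sigma^k},\dots,P^{\sigma^{n-1}}\ra\cap U(\F_{q^n})=0$, and (ii) $\la P^{\sigma^i}:i\ge k\ra+\bigoplus_{i<k}V_i+U(\F_{q^n})=V'$. For (i) I would use that $K$ is $\sigma$-invariant of dimension $k$, hence spanned over $\F_{q^n}$ by the $\sigma$-fixed vectors $\sum_i\lambda^{q^i}P^{\sigma^i}$ with $\lambda$ ranging over the $k$-dimensional $\F_q$-subspace $T=\{\lambda:\lambda\mathbf v\in U\}$; writing a general element as $\sum_i(\sum_j\mu_j\lambda_j^{q^i})P^{\sigma^i}$ and demanding that the coefficients of $P^{\sigma^0},\dots,P^{\sigma^{k-1}}$ vanish is a homogeneous system whose matrix is the Moore matrix $(\lambda_j^{q^i})_{0\le i\le k-1}$, invertible because the $\lambda_j$ are $\F_q$-independent, forcing the element to be $0$. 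For (ii) I would argue dually: a functional annihilating the left-hand side lies in the $\sigma^*$-invariant, $k$-dimensional space $(\la P,\dots,P^{\sigma^{n-1}}\ra+U(\F_{q^n}))^\perp$ and is supported off the first $k$ blocks, and the same Moore nonvanishing, now for the $\sigma^*$-fixed functionals, forces it to be zero. Here the $\sigma$-invariance of both $U(\F_{q^n})$ and $\la P,\dots,P^{\sigma^{n-1}}\ra$ is essential.

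The main obstacle is precisely step (ii): the lower bound is soft, but equality demands that the interval $\{0,\dots,k-1\}$ specifically, not merely some $k$-element set, contributes a nonzero term, and this rests on the cyclic symmetry supplied by $\sigma$ together with the Moore/Frobenius nonvanishing. Once $G_{S_0}\not\equiv0$ is established, the multiplicity is exactly $1+q+\cdots+q^{k-1}$, which is $>1$ as soon as $k\ge2$, so $P$ is a singular point. As a global consistency check, one may verify via B\'ezout on the curve $\cV_{2n}$ of degree $1+q+\cdots+q^{n-1}$ that for $r=2$ these multiplicities sum correctly over the points of $L_U$: for a club, the weight-$(n-1)$ point contributes $1+\cdots+q^{n-2}$ and the $q^{n-1}$ simple points contribute $1$ each, totalling $1+q+\cdots+q^{n-1}$.
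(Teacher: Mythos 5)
Your argument is correct in substance but organized quite differently from the paper's. The paper first restricts to a line $\ell$ through $P$ not contained in $L_U$ (reducing to $r=2$), expands the explicit $2n\times 2n$ determinant $F(x,y)$ in the monomials $\prod_{i\in T}x^{q^i}\prod_{j\notin T}y^{q^j}$, identifies the minimal $y$-degree term as the one with $T=\{k,\ldots,n-1\}$ and shows its coefficient factors as $\det M_1\det M_2^{q^k}\neq 0$ for two Moore matrices built from an adapted basis of $U\cap\ell$; it then lifts from lines to the multiplicity on $X(U)$ itself by noting that a nonzero form of degree $<q^n$ cannot vanish at every $\F_{q^n}$-rational direction. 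You instead work directly in $\PG(r-1,q^n)$: the multilinear expansion of the determinant in the slots $\mathbf x^{(i)}=\mathbf x^{q^i}$ around $\mathbf v$ gives homogeneous pieces $G_S$ of pairwise distinct degrees $\sum_{i\in S}q^i$, so the multiplicity is read off as the least such degree with $G_S\not\equiv 0$; the lower bound is a clean dimension count against $K=\la P,\ldots,P^{\sigma^{n-1}}\ra\cap U(\F_{q^n})$, and the upper bound is the nonvanishing of $G_{\{0,\ldots,k-1\}}$. Your route avoids both the reduction to lines and the $\deg<q^n$ lifting step, and makes transparent why only the ``repunit'' exponents can occur; the price is that the Moore-matrix input must be applied in a vector-valued setting rather than to scalars.

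That is where your sketch is thinner than it should be, in two places. First, the justification ``$\cV_{rn}$ is smooth at $P$, being the image of the immersion $\alpha$'' is not right as stated ($\alpha$ is a Frobenius twist, so its differential vanishes in the twisted slots); what saves you is that the multiplicity in question is, as in the paper, simply the order of vanishing at $P$ of the polynomial $F(x_0,\ldots,x_{r-1})$ of display $(2)$, which is exactly what your expansion computes. Second, in step (ii) the $\sigma^*$-fixed functionals spanning the $k$-dimensional space $(\la P,\ldots,P^{\sigma^{n-1}}\ra+U(\F_{q^n}))^{\perp}$ have $V_0^*$-components lying in the $(r-1)$-dimensional space $\la P\ra^{\perp}\cap V_0^*$, so for $r>2$ the scalar Moore lemma (\cite[Lemma 3.51]{finitefields}) does not apply verbatim: you need that $k$ vectors of $\F_{q^n}^{\,r-1}$ that are $\F_q$-independent have $\F_{q^n}$-independent Frobenius tuples $(\mathbf w_j,\mathbf w_j^{q},\ldots,\mathbf w_j^{q^{k-1}})$. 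This is true and follows by projecting to a coordinate in which a maximal $\F_q$-independent subfamily survives, applying Moore there, and inducting on $r$ (for $r=2$ the components are scalars and plain Moore suffices, which is essentially why the paper's line-reduction lets it stay within the scalar lemma). With that lemma supplied, your proof is complete.
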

\begin{proof}
Without loss of generality, we assume that $P=(1,0,0,\ldots,0)\in L_U$  has weight $k$.  We first study the multiplicity of $P$ as a point of $X(U)$ intersected with the embedding of a line $\ell$ through it. By Grassmann, $\dim U \cap \ell \geq n$. If $\ell \notin L_U$, then $\dim U \cap \ell = n$. So let $\ell$ be a line through $P$ not contained in $L_U$ and let $U_1$ be $U\cap \ell$. Then $L_{U_1}^{\alpha}$ is a hyperplane section of $\ell^{\alpha} \cong \cV_{2n}$. By the action of PGL$(r,q^n)$, we can assume that $\ell$ is defined by $x_i=0, i=2,3,\ldots,n-1$, and we can disregard $x_i$, $i>1$.  We observe that he weight of $P$ is $k$ also in $U_1$ and there exists  an $\F_{q}$-vector subspace $K$ of $\F_{q^n}$ of dimension $k$ such that $\Pi_P \cap U_1=\{\lambda(1,0), \lambda \in K\}$. Let $\{a_1,a_2,\ldots,a_k\}$ be an $\F_q$-basis for $K$. Then we can take as $\F_q$-basis of $U_1$ the vectors $\{(a_i,0),(a_j,b_j),i=1,2,\ldots,k,j=k+1,k+2,\ldots,n\}$. Then, $\{b_{k+1},b_{k+2},\ldots,b_n\}$ is also an independent set over $\F_q$. Hence, by \cite[Lemma 3.51]{finitefields}, the matrices

$$
M_1=\left(\begin{array}{cccc}
         a_1 & a_1^{q} & \cdots & a_1^{q^{k-1}} \\
         a_2 & a_2^{q} & \cdots & a_2^{q^{k-1}} \\
         \cdots & \cdots& \cdots & \cdots \\
         a_k & a_k^{q} & \cdots & a_k^{q^{k-1}} \\
       \end{array}\right)
$$
and

 $$
M_2=\left(\begin{array}{cccc}
        b_{k+1} & b_{k+1}^{q} & \cdots & b_{k+1}^{q^{n-k-1}} \\
        b_{k+2} & b_{k+2}^q  & \cdots & b_{k+2}^{q^{n-k-1}} \\
         \cdots & \cdots& \cdots & \cdots \\
         b_n & b_n^{q} & \cdots & b_n^{q^{n-k-1}} \\
       \end{array}\right)
$$

\noindent are non-singular.

In the cyclic representation, we have

\noindent$U_1=\la (a_1,0,a_1^q,0,\ldots,a_1^{q^{n-1}},0), (a_2,0,a_2^q,0,\ldots,a_2^{q^{n-1}},0),\ldots,(a_k,0,a_k^q,0,\ldots,a_k^{q^{n-1}},0), $ $ (a_{k+1},b_{k+1},a_{k+1}^q,b_{k+1}^q,\ldots,a_{k+1}^{q^{n-1}},b_{k+1}^{q^{n-1}}),\ldots,(a_n,b_n,a_n^q,b_n^q,\ldots,a_n^{q^{n-1}},b_n^{q^{n-1}})  \ra_{\F_q} $,

\noindent $X(U_1)=V(F(x,y))$ and

$$F(x,y)=\det\left(
  \begin{array}{cccccccc}
    a_1 & 0 & a_1^q & 0 & \cdots & \cdots& a_1^{q^{n-1}} &  0 \\
    a_2 &0 & a_2^q & 0 & \cdots & \cdots& a_2^{q^{n-1}} &  0 \\
    \cdots & \cdots & \cdots & \cdots & \cdots & \cdots & \cdots & \cdots \\
     a_k &0 & a_k^q & 0 & \cdots & \cdots& a_k^{q^{n-1}} &  0 \\
    a_{k+1} & b_{k+1} & a_{k+1}^q & b_{k+1}^q & \cdots & \cdots& a_{k+1}^{q^{n-1}} &  b_{k+1}^{q^{n-1}} \\
    \cdots & \cdots & \cdots & \cdots & \cdots & \cdots & \cdots & \cdots \\
     a_{n} & b_{n} & a_{n}^q & b_{n}^q & \cdots & \cdots& a_{n}^{q^{n-1}} &  b_{n}^{q^{n-1}} \\
    x & y & 0 & 0 & \cdots & \cdots & 0 & 0 \\
    0 & 0 & x^q & y^q & \cdots& \cdots & 0 & 0 \\
    \cdots & \cdots & \cdots & \cdots & \cdots & \cdots & \cdots & \cdots \\
    0& 0 & 0 & 0 & \cdots & \cdots & x^{q^{n-1}}& y^{q^{n-1}}\\
  \end{array}
\right).$$

\noindent The monomials appearing in $F(x,y)$ are of type $\displaystyle \prod_{i \in T} x^{q^i}\prod_{j \notin T}y^{q^j}$, where $T\subseteq \{0,1,2,\ldots,n-1\}$ with coefficient equals (up to the sign) to the determinant of the $n\times n$ submatrix of

$$\left(
  \begin{array}{cccccccc}
    a_1 & 0 & a_1^q & 0 & \cdots & \cdots& a_1^{q^{n-1}} &  0 \\
    a_2 &0 & a_2^q & 0 & \cdots & \cdots& a_2^{q^{n-1}} &  0 \\
    \cdots & \cdots & \cdots & \cdots & \cdots & \cdots & \cdots & \cdots \\
     a_k &0 & a_k^q & 0 & \cdots & \cdots& a_k^{q^{n-1}} &  0 \\
    a_{k+1} & b_{k+1} & a_{k+1}^q & b_{k+1}^q & \cdots & \cdots& a_{k+1}^{q^{n-1}} &  b_{k+1}^{q^{n-1}} \\
    \cdots & \cdots & \cdots & \cdots & \cdots & \cdots & \cdots & \cdots \\
     a_{n} & b_{n} & a_{n}^q & b_{n}^q & \cdots & \cdots& a_{n}^{q^{n-1}} &  b_{n}^{q^{n-1}} \end{array} \right)$$

\noindent containing $b_h^{q^i},a_h^{q^j}$ with $i \in T$ and $j \notin T$. If $|T|>n-k$, then  such a determinant is 0. Hence, the monomial with the minimum degree for $y$ in $F(x,y)$ is obtained by taking $T=\{k,k+1,\ldots, n-1\}$, so that its coefficient of $\displaystyle \prod_{i \in T} x^{q^i}\prod_{j \notin T}y^{q^j}$, up to the sign, is $\det N$, where

$$
N=\left(
  \begin{array}{cccccccc}
    a_1 & a_1^q &  \cdots & a_1^{q^{k-1}}& 0 & 0 &\cdots&   0 \\
    a_2 & a_2^q & \cdots &  a_2^{q^{k-1}} & 0& 0 &\cdots &  0 \\
    \cdots & \cdots & \cdots & \cdots & \cdots & \cdots & \cdots & \cdots \\
     a_k & a_k^q & \cdots &a_k^{q^{k-1}} & 0& 0 & \cdots  &  0 \\
    a_{k+1} &  a_{k+1}^q & \cdots & a_{k+1}^{q^{k-1}} & b_{k+1}^{q^k} & b_{k+1}^{q^{k+1}} &\cdots &  b_{k+1}^{q^{n-1}} \\
     a_{k+2} &  a_{k+2}^q & \cdots & a_{k+2}^{q^{k-1}} & b_{k+2}^{q^k} & b_{k+2}^{q^{k+1}} & \cdots &  b_{k+2}^{q^{n-1}} \\
    \cdots & \cdots & \cdots & \cdots & \cdots & \cdots & \cdots & \cdots \\
     a_{n} &a_{n}^q & \cdots &a_{n}^{q^{k-1}}& b_n^{q^k} & b_n^{q^{k+1}} &  \cdots &  b_{n}^{q^{n-1}} \end{array} \right).
$$

We have $\det N=\det M_1 \det M_2^{q^k}\neq 0$.

Therefore, the minimum degree for $y$ is exactly  $1+q+\cdots q^{k-1}$ and $(1,0)$ has multiplicity  $1+q+\cdots + q^{k-1}$. Therefore, by the action of the lifting of PGL$(r,q^n)$ on $\cV_{rn}$, the multiplicity of $P$  on the embedding of any line through $P$ not contained in $L_U$ is $1+q+\cdots + q^{k-1}$. To determine the multiplicity of $P$ as a point of $X(U)=V(F)$, we consider the affine cover defined by $x_0 \neq 0$. We get

$$f(x_1,x_2,\ldots,x_{r-1}):=F(1,x_1,x_2,\ldots,x_{r-1})= \displaystyle\sum_{j=m}^{1+q+\cdots + q^{n-1}}f_j(x_1,x_2,\ldots,x_{r-1}), $$ where $f_j(x_1,x_2,\ldots,x_{r-1})$ is homogeneous of degree $j$. By the multiplicity of $P$ on the embedding of any line through it, we get that

$$f(\mu a_1,\mu a_2,\ldots, \mu a_{r-1})=  \displaystyle\sum_{j=1+q+\cdots q^{k-1}}^{1+q+\cdots + q^{n-1}}\mu^jf_j(a_1,a_2,\ldots,a_{r-1})$$ with $f_{1+q+\cdots q^{k-1}}(a_1,a_2,\ldots,a_{r-1})\neq 0$, for some  $(a_1,a_2,\ldots,a_{r-1}) \neq \mathbf{0}$. Suppose that $m < 1+q+ \cdots q^{k-1}$, then $f_m(a_1,a_2,\ldots,a_{r-1})=\mathbf{0}$ with

\noindent $\deg f_m < 1+q+\cdots q^{k-1} \leq 1+q+\cdots q^{n-1} < q^n$ for any prime power $q$, a contradiction. Hence, $m=1+q+\cdots q^{k-1}$ is the multiplicity of $P$ on $X(U)$.
\end{proof}

\begin{corollary}\label{rational}
Let $r=2$, that is $L_U$ is a linear set of maximum rank of PG$(1,q^n)$ and let $L_U=V(F(x,y))$, where $F(x,y)$ is defined by determinantal condition $(2)$. Then $V(F(x,y))$ has $\deg F = 1+q+ \cdots + q^{n-1}$ roots in $\F_{q^n}$ counted with their multiplicity.
\end{corollary}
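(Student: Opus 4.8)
The plan is to translate the local data already supplied by Theorem \ref{singular} into a global multiplicity count for the binary form $F(x,y)$ and then to compare that count with $\deg F$. Since $r=2$, the locus $X(U)=V(F(x,y))$ is a finite set of points on the line $\PG(1,q^n)$ and $F$ is a binary form of degree $N:=1+q+\cdots+q^{n-1}=\frac{q^n-1}{q-1}$. The $\F_{q^n}$-rational points of $V(F)$ are exactly the points of $L_U$, and the first step is to observe that the multiplicity of such a point $P$ as a root of the binary form $F$ is precisely the integer computed in Theorem \ref{singular}. Indeed, for $r=2$ the proof of that theorem dehomogenizes $F$ in the affine chart around $P$ and extracts the order of vanishing of a one-variable polynomial, and for a binary form this order of vanishing is exactly the root multiplicity. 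Hence a point $P\in L_U$ of weight $w(P)=k\ge 2$ is a root of $F$ of multiplicity $1+q+\cdots+q^{k-1}=\frac{q^k-1}{q-1}$, while a point of weight $1$ is a root of multiplicity at least $1=\frac{q-1}{q-1}$.

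Next I would invoke the standard weight identity for linear sets. Since $\dim_{\F_q}U=n$, the subspace $\PG(U,\F_q)$ is an $(n-1)$-dimensional projective subspace of $\Sigma=\PG(2n-1,q)$, and the Desarguesian spread $\cS$ partitions $\Sigma$ into its $(n-1)$-dimensional members $\Pi_P$. Intersecting $\PG(U,\F_q)$ with this partition gives a disjoint decomposition into the pieces $\PG(U,\F_q)\cap\Pi_P$, one for each point $P$ of $L_U$, where $\PG(U,\F_q)\cap\Pi_P$ is a projective space of dimension $w(P)-1$. Counting $\F_q$-points on both sides yields
\[
\sum_{P\in L_U}\frac{q^{w(P)}-1}{q-1}=\frac{q^n-1}{q-1}=N.
\]

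Combining the two ingredients, the sum over the $\F_{q^n}$-rational points of $V(F)$ of the root multiplicities identified in the first step is at least
\[
\sum_{P\in L_U}\frac{q^{w(P)}-1}{q-1}=N=\deg F.
\]
On the other hand, a nonzero binary form of degree $N$ has total root multiplicity equal to $N$ over the algebraic closure, and at most $N$ over any field. Therefore the rational points of $L_U$ already exhaust the full multiplicity $N$: there is no room for roots outside $\F_{q^n}$, every inequality above is forced to be an equality (so that the weight-$1$ points are in fact simple roots), and $F$ splits completely over $\F_{q^n}$. Thus $V(F(x,y))$ has exactly $\deg F$ roots in $\F_{q^n}$ counted with multiplicity.

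The computations are routine once this dictionary is set up, so the single delicate point is the identification in the first step of the binary-form root multiplicity with the multiplicity furnished by Theorem \ref{singular}. For general $r$ the multiplicity of a point on the hypersurface $X(U)$ and its contribution as a ``root'' need not coincide, but for $r=2$ the determinantal locus collapses to a zero-dimensional scheme on a line, where the two notions are literally the same; I would state this coincidence explicitly, since it is the hinge on which the whole count turns.
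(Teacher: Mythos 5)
Your proof is correct and follows essentially the same route as the paper: combine the multiplicity computation of Theorem \ref{singular} with the weight identity $\sum_{P\in L_U}\frac{q^{w(P)}-1}{q-1}=\frac{q^n-1}{q-1}$ (which the paper simply cites from Polverino's survey, Prop.~2.2, while you rederive it from the spread partition of $\PG(U,\F_q)$) and conclude by comparing with $\deg F$. Your explicit handling of the weight-$1$ points via an inequality forced to equality, and your remark that for $r=2$ the hypersurface multiplicity coincides with the root multiplicity of the binary form, are careful touches the paper leaves implicit, but the argument is the same.
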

\begin{proof}
  We have that $V(F(x,y))=|L_{U}|=\displaystyle \sum_{i=1}^{n}t_j$ and $\displaystyle \sum_{j=1}^{n}t_j\frac{q^j-1}{q-1}=1+q+\cdots + q^{n-1}$, where $t_j$ is the number of points of $L_{U_i}$ of weight $j$ (see \cite[Prop. 2.2]{OP2010}). Hence, by Proposition \ref{singular}, the number of zeros of $F(x,y)$ in $\PG(1,q^n)$ counted with their multiplicity is equal to the degree of $F(x,y)$, so all the zeros of $F(x,y)$ are in $\PG(1,q^n)$.
\end{proof}

We can prove the following Theorem, which is  part of our main results.

\begin{theorem}\label{main1}
Let $U$ and $W$ two $\F_q$-vector space of dimension $rn-n$ such that $L_U=L_W$. Let $X(U)=V(F)$, $X(W)=V(G)$, with $F,G \in \F_{q^n}[x_0,x_1,\ldots,x_{r-1}]$ defined by determinantal condition $(2)$. Then $G=\lambda F$ for some $\lambda \in \F_{q^n}^*$. Also, $w_{L_U}(P)=w_{L_W}(P)$ $\forall \, P \in L_U=L_W$.
\end{theorem}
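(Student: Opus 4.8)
The plan is to pass from the equality of point sets to the equality of the defining forms, and then to read off the weights from the multiplicities via Theorem~\ref{singular}. First I would record the basic translation: by the determinantal condition $(2)$, a point $P\in\PG(r-1,q^n)$ lies on $L_U$ exactly when $F(P)=0$ and on $L_W$ exactly when $G(P)=0$, so $L_U=L_W$ forces $F$ and $G$ to have the same zero set among the $\F_{q^n}$-rational points of $\PG(r-1,q^n)$. Since the monomials $\prod_{i}x_{f(i)}^{q^i}$ ($f\in\fF$) are pairwise distinct, hence linearly independent, the pullback map on linear forms along $\alpha$ is injective; consequently proving $G=\lambda F$ is the same as proving that the two hyperplanes cutting $X(U)=X(W)$ out of $\cV_{rn}$ coincide. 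Once $G=\lambda F$ is known, the two forms have equal multiplicity at every point, and Theorem~\ref{singular} applied to $U$ and to $W$ turns this into $\tfrac{q^{w_{L_U}(P)}-1}{q-1}=\tfrac{q^{w_{L_W}(P)}-1}{q-1}$, i.e. $w_{L_U}(P)=w_{L_W}(P)$; so the weight statement is a free consequence of the proportionality.

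For $r=2$ this program is clean. Here $F,G$ are binary forms of degree $D=1+q+\cdots+q^{n-1}$ over $\F_{q^n}$ with $V(F)=V(G)=L$ as point sets of $\PG(1,q^n)$. By Corollary~\ref{rational} each of $F$ and $G$ splits completely over $\F_{q^n}$, the total number of roots counted with multiplicity being $D$. Writing $F=c\prod_{P\in L}\ell_P^{a_P}$ and $G=c'\prod_{P\in L}\ell_P^{b_P}$ with $\ell_P$ a linear form vanishing at $P$, Theorem~\ref{singular} identifies $a_P=\tfrac{q^{w_{L_U}(P)}-1}{q-1}$ and $b_P=\tfrac{q^{w_{L_W}(P)}-1}{q-1}$, with $\sum_P a_P=\sum_P b_P=D$. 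Thus for $r=2$ everything reduces to the single assertion $a_P=b_P$ for all $P$: granting this, $F$ and $G$ have identical root multisets and $G=\lambda F$ is immediate.

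The pointwise equality of these multiplicities is the step I expect to be the genuine obstacle. The crude invariants do not suffice: from $|L_U|=|L_W|$ and $\sum_P(q^{w_{L_U}(P)}-1)=q^n-1=\sum_P(q^{w_{L_W}(P)}-1)$ one only obtains $\sum_P q^{w_{L_U}(P)}=\sum_P q^{w_{L_W}(P)}$, which does not force the weight functions to agree termwise. To pin the exponents down I would work locally at a fixed point, say $P=(1,0)$, exploiting the explicit description inside the proof of Theorem~\ref{singular}: the multiplicity $a_P$ is the minimal $y$-degree occurring in $F$, and it is controlled by the ranks of the Dickson-type submatrices built from an $\F_q$-basis of $\langle P\rangle\cap U$ together with a complementary basis. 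The task is then to show that $L_U=L_W$ forces the minors attached to $U$ and to $W$ to share the same vanishing pattern, so that these minimal $y$-degrees coincide; I expect this to require a comparison of Dickson matrices with equal corresponding (principal) minors, and it is here, rather than in the formal reductions above, that the real content sits.

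Finally, for $r>2$ I would deduce the weight equality from the planar case by restriction to lines. Fix $P\in L$ and choose a line $\ell$ through $P$ with $\ell\not\subseteq L_U=L_W$; by Grassmann's identity $\dim_{\F_q}(U\cap\ell)=\dim_{\F_q}(W\cap\ell)=n$, so $U\cap\ell$ and $W\cap\ell$ give linear sets of maximum rank on $\ell\cong\PG(1,q^n)$. Because $\langle P\rangle\subseteq\ell$ one has $w_{L_U}(P)=w_{L_{U\cap\ell}}(P)$ and likewise for $W$, while $L_{U\cap\ell}=L_U\cap\ell=L_W\cap\ell=L_{W\cap\ell}$, so the $r=2$ result yields $w_{L_U}(P)=w_{L_W}(P)$. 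For the proportionality $G=\lambda F$ in arbitrary dimension one must still upgrade the set-theoretic equality $X(U)=X(W)$ on $\cV_{rn}$, now enriched by equal multiplicities at every point, to equality of the two hyperplane sections as subschemes, and hence of the hyperplanes themselves; confirming that $X(U)$ determines its hyperplane is the remaining point I would need to verify in the higher-dimensional case.
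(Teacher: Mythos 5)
Your reduction of the weight statement to the proportionality $G=\lambda F$ via Theorem \ref{singular}, and your observation that the crude counting identities do not force the weights to agree pointwise, are both correct and consistent with the paper. But the central step --- passing from the set-theoretic equality $V(F)=V(G)$ to $G=\lambda F$ --- is exactly the point you leave open: for $r=2$ you reduce it to the unproven assertion $a_P=b_P$, and for $r>2$ you explicitly defer the claim that the hyperplane is determined by its section of $\cV_{rn}$. The fallback you sketch, namely deducing the multiplicity pattern from Dickson matrices with equal corresponding principal minors, is circular relative to the paper's development: the equality of those minors is the content of Theorems \ref{main2} and \ref{main3}, which the paper derives \emph{from} Theorem \ref{main1}. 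So as it stands the proposal identifies the obstacle correctly but does not overcome it.

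The paper's route around this obstacle is different and does not go through the multiplicities at all. First, Corollary \ref{rational} is used to show that $F$ (and likewise $G$) admits no non-constant factor $H$ with $V(H)=\emptyset$: if it did, one would have $V(F)=V(F')$ with $\deg F'<\deg F$, and then a line $\ell\not\subseteq V(F)$ would meet $V(F)$ in fewer than $1+q+\cdots+q^{n-1}$ points counted with multiplicity, contradicting Corollary \ref{rational}. Hence $F$ and $G$ are of minimal degree among polynomials cutting out $X(U)=X(W)$, and the Nullstellensatz over finite fields gives $F\equiv\lambda G$ in $\F_{q^n}[x_0,\ldots,x_{r-1}]/(x_0^{q^n}-x_0,\ldots,x_{r-1}^{q^n}-x_{r-1})$; since $\deg F=\deg G=1+q+\cdots+q^{n-1}<q^n$, this congruence is an actual equality $G=\lambda F$. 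Only then is Theorem \ref{singular} invoked, to convert the now automatic equality of multiplicities at each point into $w_{L_U}(P)=w_{L_W}(P)$. To complete your argument you should import this Nullstellensatz-plus-degree-bound step rather than attempt to establish $a_P=b_P$ directly; your line-restriction reduction of the weight statement for $r>2$ is fine once the $r=2$ proportionality is in place.
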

\begin{proof}
We have that $V(F)=V(G)$ in PG$(r-1,q^n)$, that is $F$ and $G$ have the same set of $\F_{q^n}$-rational zeros. Suppose that there is a non-constant  polynomial $H(x_0,x_1,\ldots,x_{n-1})$ dividing $F$ such that $V(H)= \emptyset$ in PG$(r-1,q^n)$, that is $H$ does not have $\F_{q^n}$-rational zeros. Hence $F=HF'$ and $V(F)=V(F')$ with $\deg F'< \deg F $. By Corollary \ref{rational}, for a line $\ell$ not contained in $V(F)$, the number of points of $V(F)\cap \ell$ is $1+q+\cdots + q^{n-1}= \deg F$, a contradiction. The same is valid for $G$ of course. Hence, $F$ and $G$ are polynomials of minimal degree defining $X(U)$ and $X(W)$. Then, by Nullstellensatz over finite field, we have $V(F)=V(G)$ implies that $F \equiv \lambda G$ for some $\lambda \in \F_{q^n}^*$ in $\F_{q^n}[x_0,x_1,\ldots,x_{r-1}]/(x_0^{q^n}-x_0, x_1^{q^n}-x_1,\ldots,x_{r-1}^{q^n}-x_{r-1})$. Since $\deg F=\deg G =1+q+\cdots + q^{n-1} < q^{n}$ for any prime power $q$, we must have $G=\lambda F$. Also, since the singularity of the points is the same, by Theorem \ref{singular} so is the weight distribution in $L_U$ and $L_W$.
\end{proof}

We will need also the following result.

\begin{theorem}\label{dlinear}
 Let $U$ and $W$ two $\F_q$-vector space of dimension $rn-n$ such that $L_U=L_W$. If there exists $1 < d|n$ and  $u\neq \mathbf{0}$ such that $\la u \ra_{\F_{q^d}} \subset U$, then $\la \lambda u \ra_{\F_{q^d}} \subset W$ for some $\lambda \in \F_{q^n}$.
\end{theorem}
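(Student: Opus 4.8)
The plan is to reduce to the case $r=2$ and then to work entirely on the spread element $\Pi_P$, where $P=\la u\ra_{\F_{q^n}}$. For the reduction, choose a line $\ell$ of $\PG(r-1,q^n)$ through $P$ with $\ell\not\subseteq L_U$; such a line exists because $L_U$ is a proper subset of $\PG(r-1,q^n)$ (for $r=2$ the statement is already the base case). Putting $U_1=U\cap\ell$ and $W_1=W\cap\ell$, Grassmann's identity together with $\ell\not\subseteq L_U$ gives $\dim_{\F_q}U_1=\dim_{\F_q}W_1=n$, while $L_{U_1}=L_U\cap\ell=L_W\cap\ell=L_{W_1}$ and the weights of the points of $\ell$ are unchanged. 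Since $\la u\ra_{\F_{q^d}}\subseteq\Pi_P\subseteq\ell$ we still have $\la u\ra_{\F_{q^d}}\subseteq U_1$, so it is enough to settle $r=2$: a solution $\la\lambda u\ra_{\F_{q^d}}\subseteq W_1$ will automatically lie in $W$.

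With $r=2$ I would pass to a convenient normal form. As $L_U=L_W$ is proper it has a weight-$0$ point in both $U$ and $W$; sending such a point to $(0:1)$ and $P$ to $(1:0)$ by an element of $\PGL(2,q^n)$ (which preserves dimensions, weights and $\F_{q^d}$-sublines), we may write $U=\{(x,f(x)):x\in\F_{q^n}\}$ and $W=\{(x,g(x)):x\in\F_{q^n}\}$ with $f,g$ $\F_q$-linearized polynomials. In these coordinates the hypothesis reads $\F_{q^d}\subseteq\ker f$, the conclusion reads $\lambda\F_{q^d}\subseteq\ker g$ for some $\lambda\in\F_{q^n}^*$, and $L_U=L_W$ means $\dim_{\F_q}\ker(f-t\,\mathrm{id})=\dim_{\F_q}\ker(g-t\,\mathrm{id})$ for every $t\in\F_{q^n}$. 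By Theorem~\ref{main1} the determinantal forms agree, $F=\lambda G$, and in the Dickson-matrix description of $f$ and $g$ this is exactly the statement that the two Dickson matrices have equal corresponding principal minors, the object characterized in Section~3. The goal is to read off from that coincidence a full $\F_{q^d}$-line inside $\ker g$.

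The main obstacle is precisely this last transfer, and it is genuinely delicate. The data preserved by Theorem~\ref{main1}, namely the point set $L_U=L_W$ together with the weight function, depend only on the multiset of roots of $F$ and their multiplicities (recall Theorem~\ref{singular}); hence, by themselves, they cannot detect the $\F_{q^d}$-module structure of $\Pi_P\cap U$. The subfield $\F_{q^d}\subseteq\ker f$ lives at $P$, whereas a priori nothing in the root/weight data prevents the matching $\F_{q^d}$-line of $W$ from sitting at some other point of the same weight — the kind of relocation permitted by the perp map $U\mapsto U^{\perp}$ — so the heart of the proof is to pin the $\F_{q^d}$-structure to $P$. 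To do this I would not argue with the planar form $F(x,y)$, which discards the $\F_{q^n}$-structure, but with the full hyperplane section of the Segre variety: using $F=\lambda G$ one gets $X(U(\F_{q^n}))=X(W(\F_{q^n}))$, and I would track the point of $\Sigma_{rn}$ fixed by the order-$d$ subgroup $\la\hat\sigma^{n/d}\ra$ (equivalently, the element of the Desarguesian spread refining $\cS$ by $\F_{q^d}$) that is carried by the $\F_{q^d}$-line in $U$. Showing that this $\hat\sigma^{n/d}$-fixed point lies on $X(W(\F_{q^n}))$ and forces an $\F_{q^d}$-line of $W$ through the same point $P$, and then extracting the correct scalar $\lambda$ from that identification, is the crux of the calculation.
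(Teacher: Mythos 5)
Your reduction to $r=2$ and the normalization $u=(1,0)$, $U=\{(x,f(x))\}$, $W=\{(x,g(x))\}$ match the paper's setup, and you correctly isolate the real difficulty: the root/weight data of $F=\lambda G$ (Theorem \ref{main1}) gives $\dim_{\F_q}(\Pi_P\cap W)=\dim_{\F_q}(\Pi_P\cap U)\ge d$ at the \emph{same} point $P$, but by itself does not show that this $d$-dimensional $\F_q$-space is closed under multiplication by $\F_{q^d}$. (Your phrasing of the obstacle as possible ``relocation'' to another point of the same weight is slightly off --- weight preservation already pins the $d$-dimensional space to $P$; what is missing is the $\F_{q^d}$-module structure of $\Pi_P\cap W$, which is strictly finer information than the weight.) The problem is that you then leave exactly this step as an unexecuted plan: ``Showing that this $\hat\sigma^{n/d}$-fixed point lies on $X(W(\F_{q^n}))$ and forces an $\F_{q^d}$-line of $W$ through the same point $P$ \dots is the crux of the calculation.'' As written, the proposal is a correct strategy outline with the decisive computation missing, so it does not constitute a proof.

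For comparison, the paper executes the transfer as follows. Writing the cyclic representation of $U$ with a basis whose first $d$ rows span $\la(1,0)\ra_{\F_{q^d}}$, one checks that the coordinate subvariety $V(y^{(i)}\colon i\equiv 0 \bmod d)$ is contained in $X(U(\F_{q^n}))$, because after deleting the corresponding columns the block $B_1'$ coming from those $d$ rows drops to rank $d-1$. Since $F=\lambda G$ forces $X(U(\F_{q^n}))=X(W(\F_{q^n}))$, the same vanishing must hold for $W$; combined with $\dim_{\F_q}(\Pi_P\cap W)\ge d$ this yields the rank condition $\rank C_1[2j\colon j\not\equiv 0\bmod d]<d$ on the cyclic representation of a basis $C_1$ of $\Pi_P\cap W$. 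An inductive argument on the coefficients then gives $\mathbf{c}^{q^d}=h\,\mathbf{c}$ for the first column $\mathbf{c}$ of $C_1$, which is precisely the statement that $\Pi_P\cap W=\lambda\F_{q^d}\cdot(1,0)$ for some $\lambda$. If you want to complete your write-up, this concrete chain (explicit subvariety $\Rightarrow$ rank condition on columns $\Rightarrow$ recursion $\mathbf{c}^{q^d}=h\mathbf{c}$) is what must replace your appeal to $\hat\sigma^{n/d}$-fixed points of $\Sigma_{rn}$, which in its current form is too vague to verify.
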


\begin{proof}
As in Theorem \ref{singular}, we can assume $r=2$ and,  up the action of $\PGL(2,q^n)$, that $u=(1,0)$. By Theorem \ref{main1},  $w_{L_U}(P)=w_{L_W}(P) \geq d$. In the cyclic representation, we have

\noindent$U=\la (1,0,1,0,\ldots,1,0), (\lambda_1,0,\lambda_1^q,0,\ldots,\lambda_1^{q^{n-1}},0),\ldots,(\lambda_{d-1},0,\lambda_{d-1}^q,0,\ldots,\lambda_{d-1}^{q^{n-1}},0), $  $(a_{1},b_{1},a_{1}^q,b_{1}^q,\ldots,a_{1}^{q^{n-1}},b_{1}^{q^{n-1}}),\ldots,(a_{n-d},b_{n-d},a_{n-d}^q,b_{n-d}^q,\ldots,a_{n-d}^{q^{n-1}},b_{n-d}^{q^{n-1}})  \ra_{\F_q} $,

\noindent with $\{1,\lambda_1,\ldots,\lambda_{d-1}\}$ an $\F_{q}$-basis for $\F_{q^d}$. Let

$$B_1=\left(
	\begin{array}{ccccccccc}
	1	& 0 & 1 & 0 & \cdots & \cdots & \cdots & 1 & 0 \\
\lambda_1	& 0 & \lambda_1^q & 0 & \cdots & \cdots & \cdots & \lambda_1^{q^{n-1}} & 0 \\
\cdots & \cdots & \cdots & \cdots & \cdots & \cdots & \cdots & \cdots & \cdots \\
\lambda_{d-1}	& 0 & \lambda_{d-1}^q & 0 & \cdots & \cdots & \cdots & \lambda_{d-1}^{q^{n-1}} & 0 \\
\end{array}
	\right)$$

\noindent and

$$B_2=\left(
	\begin{array}{ccccccccc}
a_{1} & b_{1} & a_{1}^q & b_{1}^q & \cdots & \cdots & \cdots &a_{1}^{q^{n-1}} & b_{1}^{q^{n-1}}\\
a_{2} & b_{2} & a_{2}^q & b_{2}^q & \cdots & \cdots & \cdots &a_{2}^{q^{n-1}} & b_{2}^{q^{n-1}}\\
\cdots & \cdots & \cdots & \cdots & \cdots & \cdots & \cdots & \cdots & \cdots \\
a_{n-d} & b_{n-d} &a_{n-d}^q & b_{n-d}^q &\cdots & \cdots & \cdots & a_{n-d}^{q^{n-1}} & b_{n-d}^{q^{n-1}}\\
\end{array}
	\right)
$$

and let $B_i[J]$ be the submatrix of $B_i$ obtained by taking the $j$-th column of $B_i$ for every $j \in J \subset \{0,1,\ldots,2n-1\}$.

 We want to show that $V(y_i, i \equiv 0 \mod d) \subset X(U(\F_{q^n}))$. For $y_i=0$ $\forall \, i \equiv 0 \mod d$, we have

\[\det\left(
	\begin{array}{cccccccc}
	&  & &  B_1 &  &  &    &  \\
	&  & &  B_2  & &  &    &  \\
a_{1} & b_{1} & a_{1}^q & b_{1}^q & \cdots  & \cdots & a_{1}^{q^{n-1}} & b_{1}^{q^{n-1}}\\
\cdots & \cdots & \cdots & \cdots & \cdots  & \cdots & \cdots & \cdots \\
a_{n-d} & b_{n-d} &a_{n-d}^q & b_{n-d}^q  & \cdots & \cdots & a_{n-d}^{q^{n-1}} & b_{n-d}^{q^{n-1}}\\
		x^{(0)} & y^{(0)}  & 0 & 0  & \cdots & \cdots &0 & 0   \\
		0 & 0 & x^{(1)} & y^{(1)}  & \cdots & \cdots & 0 & 0 \\
		\cdots & \cdots & \cdots  & \cdots & \cdots & \cdots & \cdots & \cdots \\
		0 & 0 & 0 & 0 &  \cdots  & \cdots & x^{(n-1)} &  y^{(n-1)} \\
	\end{array}
	\right)=\]
\medskip

\noindent $\displaystyle\prod_{i \equiv 0 \mod d}
\det \left(
	\begin{array}{cccccccccccc}
& & & & & B_1' & & & & & \\
 & & & & & B_2'& & & & &\\
 0 & x^{(1)} & y^{(1)} & \cdots & \cdots  & \cdots & \cdots & \cdots & \cdots & \cdots & 0 & 0   \\
 \cdots & \cdots & \cdots  & \cdots & \cdots  & \cdots & \cdots & \cdots  & \cdots & \cdots   & \cdots  & \cdots\\
 0 & 0 & 0 & \cdots & x^{(d-1)} & y^{(d-1)} & \cdots & \cdots & \cdots &\cdots &   0 & 0 \\
 0 & 0 & 0 & \cdots &  \cdots & \cdots & 0 & \cdots  & \cdots   &\cdots & 0 & 0 \\
  0 & 0 & 0 & \cdots &  \cdots & \cdots &\cdots & x^{(d+1)} & y^{(d+1)}  &\cdots & 0 & 0 \\
   \cdots & \cdots & \cdots & \cdots &\cdots  & \cdots & \cdots & \cdots  & \cdots & \cdots \\
   0 & 0 & 0 & \cdots &  \cdots &  \cdots &  \cdots  & \cdots & \cdots & \cdots & x^{(n-1)} & y^{(n-1)} \\
 \end{array}
\right)$,
\medskip

\noindent where $B_i'=B_i[2j \colon j \not \equiv 0 \mod d; 2j'+1 \colon j'=0,1,\ldots,n-1]$, $i=1,2$.
For every $j,j'$ with $j \equiv j'$ $\mod d$, we have $B_1[2j]=B_1[2j']$, and $B_1[2j + 1]=\mathbf{0}$ for all $j=0,1,\ldots,n-1$. Hence $\rank  B_1[2j \colon j \not \equiv 0 \mod d; 2j'+1 \colon j'=0,1,\ldots,n-1]=d-1$ and
\medskip
\[\det\left(
	\begin{array}{ccccccccccc}
  & & & & & B_1' & & & & & \\
& & & &  & B_2'& & & & &\\
 0 & x^{(1)} & y^{(1)} & \cdots & \cdots & \cdots & \cdots & \cdots & \cdots& 0 & 0   \\
 \cdots & \cdots & \cdots & \cdots & \cdots  & \cdots & \cdots & \cdots & \cdots \\
 0 & 0 & 0 & \cdots & x^{(d-1)} & y^{(d-1)} & \cdots & \cdots & \cdots &  0 & 0 \\
  0 & 0 & 0 & \cdots &  \cdots & \cdots & x^{(d+1)} & y^{(d+1)}  &\cdots & 0 & 0 \\
   \cdots & \cdots & \cdots & \cdots & \cdots & \cdots \\
   0 & 0 & 0 & \cdots &  \cdots &  \cdots & \cdots & \cdots & \cdots & x^{(n-1)} & y^{(n-1)} \\
 \end{array}
\right)\]
\medskip

is identically zero. By $X(U)=X(W)$, we get $X(U(\F_{q^n}))=X(W(\F_{q^n}))$. Let $\left(
                                                                                    \begin{array}{c}
                                                                                      C_1 \\
                                                                                      C_2 \\
                                                                                    \end{array}
                                                                                  \right)
$
be a matrix whose rows are a basis for $W$ in its cyclic representation, with the rows of $C_1$ being a basis for a $d$-dimensional subspace of $\la (1,0)\ra_{\F_{q^n}}\cap W$. As before,  since $\rank \left(\begin{array}{c}
C_1 \\
 C_2 \\
 \end{array}
  \right)=n$ and $C_1[2j+1]=\mathbf{0}$ for all $j=0,1,\ldots,n-1$, we must have $\rank C_2[2j \colon j=0,1,\ldots,n-1]=n-d$.
In order to have $X(U(\F_{q^n}))=X(W(\F_{q^n}))$, we must have

\[
\rank \left(
        \begin{array}{c}
           C_1[2j \colon j \not \equiv 0 \mod d; 2j'+1 \colon j'=0,1,\ldots,n-1] \\
        \end{array}
      \right) < d
\]

\noindent and since $C_1[2j'+1 \colon j'=0,1,\ldots,n-1]=\mathbf{0}$ for all $j'=0,1,\ldots,n-1$, we must have $\rank C_1[2j \colon j \not \equiv 0 \mod d] < d$. We observe that  $C_1[2j \colon j \not \equiv 0 \mod d]= C_1[2j \colon j \not \equiv d-1 \mod d]^q$, so $\rank C_1[2j \colon j \not \equiv d-1 \mod d] < d $. By \cite[Lemma 3.51]{finitefields}, $\rank C_1[2j, j=0,1,\ldots,d-1]=d$. Therefore,

\noindent $d-1=\rank  C_1[2j, j=0,1,\ldots,d-2]=\rank C_1[2j \colon j \not \equiv d-1 \mod d]$. Let $\mathbf{c}$ be $C_1[0]$, then $C_1[2j]=\mathbf{c}^{q^j}$, $j=1,2,\ldots,n-1$. Hence, $\mathbf{c}^{q^j} \in \la \mathbf{c},\mathbf{c}^q,\ldots, \mathbf{c}^{q^{d-2}} \ra$ for all $ \not \equiv d-1 \mod d$. Let $\mathbf{c}^{q^d}=\displaystyle\sum_{j=0}^{d-2}h_j\mathbf{c}^{q^j}$. We show by induction on $j$ that $h_{d-2-j}=0$ for all $j=0,1,2,\ldots, d-3$. By $\mathbf{c}^{q^d}=\displaystyle\sum_{j=0}^{d-2}h_j\mathbf{c}^{q^j}$, we get $\mathbf{c}^{q^{d+1}}=\displaystyle\sum_{j=0}^{d-2}h_j^q\mathbf{c}^{q^{j+1}}$, but we also have $\mathbf{c}^{q^{d+1}}=\displaystyle\sum_{i=0}^{d-2}k_i\mathbf{c}^{q^i}$, so, by the independence of $\{\mathbf{c},\mathbf{c}^q,\ldots,\mathbf{c}^{q^{d-1}}\}$, we get $h_{d-2}=0$. Suppose that $h_{d-2-j}=0$ for all $j$ such that $0 \leq j < \overline{j}$, then $\mathbf{c}^{q^{d+\overline{j}}}=\displaystyle\sum_{i=0}^{d-1-\overline{j}}h_i^{q^{\overline{j}}}\mathbf{c}^{i+\overline{j}}$. But we also have $\mathbf{c}^{q^{d+\overline{j}}} \in \la \mathbf{c},\mathbf{c}^q,\ldots, \mathbf{c}^{d-2} \ra $, hence $h_{d-2-\overline{j}}=0$. Hence, $\mathbf{c}^{q^d} = h \mathbf{c}$ for some $h \in \F_{q^n}^*$. So $\mathbf{c}=\lambda\left(
                                                            \begin{array}{c}
                                                              1 \\
                                                              \mu_1 \\
                                                              \vdots \\
                                                              \mu_{d-1} \\
                                                            \end{array}
                                                          \right)
$ for some  $\lambda \in \F_{q^n}^*$ and $\mu_i \in \F_{q^d}^*$, $i=1,2,\ldots, d-1$.

\end{proof}

Up to the action of $\PGL(r,q^n)$, we can assume that $P_{\infty}=(0,0,\ldots,1)\notin L_U$, hence we can write $U=\{(x_0,x_1,\ldots,x_{r-2}, \displaystyle\sum_{i=0}^{r-2}f_i(x_i)), x_i \in \F_{q^n}\}$, where $f_i\in \F_{q^n}[x_i]$ is a \emph{linearized polynomial} $\forall \, i=0,1,\ldots,r-2$.  A polynomial $f(x) \in \F_{q^n}[x]$ is said to be a linearized polynomial if $f(x)=a_0x+a_1x^{q}+\cdots +a_{n-1}x^{q^{n-1}}$, hence  $f$ is an endomorphism of $\F_{q^n}$ when regarded as $\F_q$-vector space. Since $\dim_{\F_q} End(\F_{q^n})=n^2$, by comparing dimension we easily get that $f$ is an endomorphism of $\F_{q^n}$ if and only if $f$ is a linearized polynomial. To a linearized polynomial $f(x)=a_0x+a_1x^{q}+\cdots +a_{n-1}x^{q^{n-1}}$  we can associate a \emph{Dickson matrix}

$$
A=\left(
    \begin{array}{cccc}
      a_0 & a_1 & \cdots & a_{n-1} \\
      a_{n-1}^q & a_0^q &  \cdots & a_{n-2}^q \\
       \cdots &  \cdots & \cdots &  \cdots \\
      a_1^{q^{n-1}} & a_2^{q^{n-1}} &  \cdots & a_0^{q^{n-1}} \\
    \end{array}
  \right)
$$

\noindent that is $A$ is a $n \times n$ matrix such that its $(ij)$-entry is $a_{j-i}^{q^i}$. The rank of $f$ seen as an endomorphism is the same as the rank of the associated Dickson matrix (see \cite{WuLiu}).

Hence, there is a one-to-one correspondence between the $\F_q$-vector subspace $U$ of $\F_{q^n}^r$ disjoint from the projective point $P_{\infty}$ and the ordered sets $\{A_0,A_1,\ldots,A_{r-2}\}$ of Dickson matrices of order $n$. We will say that $\{A_0,A_1,\ldots,A_{r-2}\}$ is \textit{associated} to $U.$

We can now prove a theorem that highlights the link between matrices associated to vector spaces determining the same linear sets.

\begin{theorem}\label{main2}
Let $U$ and $W$ be two $\F_q$-vector subspaces of dimension $rn-n$ such that $L_U$ and $L_W$ do not contain $P_{\infty}=(0,0,\ldots,1)$. Let $\{A_0,A_1,\ldots,A_{r-2}\}$ and $\{B_0,B_1,\ldots,B_{r-2}\}$  be the ordered sets of Dickson matrices associated to $U$ and $W$ respectively. Then $L_U=L_W$ if and only if for every $I\subseteq \{0,1,\ldots,n-1\}$ and for every  function $\psi:I\to\{0,\ldots,r-2\} $, $\det C_{\psi}=\det C'_{\psi}$, where $C_{\psi}$ and $C'_{\psi}$ are square matrices of order $|I|$ such that the component $ij$ of $C_{\psi}$ is the component $ij$ of $A_{\psi(j)}$ and the component $ij$ of $C'_{\psi}$ is the component $ij$ of $B_{\psi(j)}$, with $i,j \in I$.
\end{theorem}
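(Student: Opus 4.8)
The plan is to make the determinantal polynomial $F$ defining $X(U)$ completely explicit in terms of the Dickson matrices $A_0,\ldots,A_{r-2}$, read off its coefficients, and then invoke Theorem~\ref{main1}. Since $P_{\infty}\notin L_U$ we may write $U=\{(x_0,\ldots,x_{r-2},\sum_{k=0}^{r-2}f_k(x_k))\}$ with each $f_k$ linearized and associated Dickson matrix $A_k$. First I would describe $U(\F_{q^n})$ intrinsically: writing $X^{(i)}_l$ for the coordinate $x_l^{q^i}$ of the cyclic representation, one checks that $U(\F_{q^n})$ is exactly the graph
$$X^{(i)}_{r-1}=\sum_{k=0}^{r-2}\sum_{j=0}^{n-1}(A_k)_{ij}\,X^{(j)}_k,\qquad i=0,\ldots,n-1,$$
so the entries of the $A_k$ are literally the coefficients of the linear relations cutting out $U(\F_{q^n})$.

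Next I would evaluate the determinant of condition $(2)$. Choosing the basis of $U(\F_{q^n})$ adapted to the graph above, and reordering the rows and the $rn$ columns so that the $n$ columns indexed by $X^{(i)}_{r-1}$ come last, the defining matrix acquires the block form $\left(\begin{smallmatrix} I & P\\ Q & R\end{smallmatrix}\right)$ with $I=I_{(r-1)n}$, with $R$ diagonal of entries $x^{(i)}_{r-1}$, and with $P,Q$ carrying the $A_k$ and the remaining variables. Taking the Schur complement of $I$ collapses $F$, up to a global sign, to an $n\times n$ determinant $\det M$ with
$$M_{i,i'}=x^{(i)}_{r-1}\,\delta_{i,i'}-\sum_{l=0}^{r-2}x^{(i)}_l\,(A_l)_{i',i}.$$
Each entry of $M$ is thus linear in the variables $x^{(0)}_\bullet,\ldots,x^{(n-1)}_\bullet$, with coefficients drawn from the fixed matrices $A_l$.

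The heart of the argument is then a multilinear expansion of $\det M$. Since the $i$-th row of $M$ is a combination $\sum_{l=0}^{r-1}x^{(i)}_l\,T^{(l)}_{i,\cdot}$ of the rows of the fixed matrices $T^{(r-1)}=I$ and $T^{(l)}=-A_l^{\,T}$ (for $l\le r-2$), expanding row by row gives
$$\det M=\sum_{\phi}\Big(\prod_{i=0}^{n-1}x^{(i)}_{\phi(i)}\Big)\,\det\big(T^{(\phi(i))}_{i,i'}\big)_{i,i'},$$
the sum running over all $\phi\colon\{0,\ldots,n-1\}\to\{0,\ldots,r-1\}$. Setting $I=\phi^{-1}(\{0,\ldots,r-2\})$ and $\psi=\phi|_{I}$ establishes a bijection between these $\phi$ and the pairs $(I,\psi)$ of the statement; the rows with $\phi(i)=r-1$ are standard basis rows, so expanding along them leaves exactly the $|I|\times|I|$ minor on rows and columns $I$, which one identifies after a transpose with $\det C_\psi$, up to a sign depending only on $\phi$. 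I would also record that distinct $\phi$ give distinct monomials, both in the multivariables and after the substitution $x^{(i)}_l=x_l^{q^i}$, since the exponent of $x_l$ recovers $\phi^{-1}(l)$ from its ($q$-ary, digits $\le 1$) expansion.

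Finally I would combine this with Theorem~\ref{main1}. The identical computation for $W$ with matrices $B_l$ produces $G$ whose coefficients are the $\pm\det C'_\psi$, with the very same signs. By Theorem~\ref{main1}, $L_U=L_W$ forces $G=\lambda F$; comparing the coefficient of the monomial coming from $I=\emptyset$ (the term $\prod_i x^{(i)}_{r-1}$, i.e.\ $N(x_{r-1})$ after substitution, whose coefficient is the empty minor, equal to the same $\pm 1$ for both $F$ and $G$ and nonzero precisely because $P_{\infty}\notin L_U$) forces $\lambda=1$. Hence $\det C_\psi=\det C'_\psi$ for all $(I,\psi)$, and the converse is immediate, since equality of all these minors makes $F=G$ coefficientwise, whence $V(F)=V(G)$ and $L_U=L_W$. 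The main obstacle I anticipate is purely the bookkeeping in the Schur reduction and the multilinear expansion: keeping the index conventions for the Dickson matrices, the transpose, and the various signs consistent, so that the minor that surfaces is exactly $C_\psi$ and not a variant of it.
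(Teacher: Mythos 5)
Your proposal is correct and follows essentially the same route as the paper: both make the determinantal polynomial $F$ explicit via the graph description of $U(\F_{q^n})$, identify the coefficient of each monomial $\prod_i x_{\phi(i)}^{q^i}$ with $\pm\det C_\psi$ for $I=\phi^{-1}(\{0,\ldots,r-2\})$ and $\psi=\phi|_I$, and then use Theorem~\ref{main1} together with the normalization that the coefficient of $N(x_{r-1})$ equals $1$ to force $\lambda=1$ and hence $F=G$ coefficientwise. The only cosmetic difference is that you first pass to an $n\times n$ Schur complement and expand multilinearly, whereas the paper reads off the same minors directly by deleting columns of the $(rn-n)\times rn$ block matrix.
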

\begin{proof}
Let $\mathbf{x}_i:=(x_i,x_i^q,\ldots,x_i^{q^{n-1}})$. After rearranging the components of $V'=\F_{q^n}^{rn}$, the cyclic representation of $U$ is

 $$U=\{(\mathbf{x}_0,\mathbf{x}_1,\ldots,\mathbf{x}_{r-2},\mathbf{x}_0A_0^{T},\mathbf{x}_1A_1^{T}, \ldots,\mathbf{x}_{r-2}A_{r-2}^{T}), x_i \in \F_{q^n},i=0,1,\ldots,r-2\}.$$

By \rif{dimension}, $\Pi_P \cap U \neq \mathbf{0}$ if and only if $\langle P,P^{\sigma},\ldots,P^{\sigma^{n-1}} \rangle \cap U(\F_{q^n}) \neq \mathbf{0}$, then we can write $X(U)=V(F(x_0,x_1,\ldots,x_{r-1}))$, with
 $F(x_0,x_1,\ldots,x_{r-1})=\det M$ and $M$ a matrix such that the first $rn-n$ rows form \emph{any} basis of $U(\F_{q^n})$ set-wised fixed by $\sigma$.

 We have that

 $$U(\F_{q^n})=\{(\mathbf{x}^{(0)},\mathbf{x}^{(1)},\ldots,\mathbf{x}^{(r-2)},\mathbf{x}^{(0)}A_0^{T},\mathbf{x}^{(1)}A_1^{T}, \ldots,\mathbf{x}^{(r-2)}A_{r-2}^{T}), x_i^{(j)} \in \F_{q^n},i=0,1,\ldots,r-2\},$$

\noindent hence we can take

\medskip
\noindent $
 M=\left(
     \begin{array}{ccccccccccccc}
        &  &  &  &  &  &  &  &  &  & A_0^T &  &  \\
        &  &  &  &  &  &  &  &  & & A_1^T &  &  \\
         &   &   &   &   & I_{rn-n}  &   &   &   &  \cdots &  \cdots &  \cdots &  \cdots \\
        &  &  &  &  &  &  &  &  &  & A_{r-1}^T &  &  \\
       x_0 & 0&  \cdots & 0 & x_1 & 0 &  \cdots & 0 &  \cdots & x_{r-1} & 0&  \cdots & 0  \\
       0 & x_0^q &  \cdots & 0 & 0 & x_1^q &  \cdots & 0 &  \cdots & 0 &x_{r-1}  & \cdots & 0 \\
        \cdots &  \cdots &  \cdots &  \cdots &  \cdots &  \cdots &  \cdots &  \cdots &  \cdots &  \cdots &  \cdots &  \cdots &  \cdots \\
       0 & 0 &  \cdots & x_0^{q^{n-1}} & 0 & 0 &  \cdots & x_1^{q^{n-1}} & \cdots & 0 & 0 & \cdots & x_{r-1}^{q^{n-1}} \\
     \end{array}
   \right).
 $

 \bigskip

For any $\phi:\{0,\ldots, n-1\}\to\{0,\ldots,r-1\} $, the coefficient of $\prod_{i=0}^{n-1} x_{\phi(i)}^{q^i}$ is the minor of

$$\left(
     \begin{array}{ccccccccccccc}
        &  &  &  &  &  &  &  &  &  & A_0^T &  &  \\
        &  &  &  &  &  &  &  &  & & A_1^T &  &  \\
         &   &   &   &   & I_{rn-n}  &   &   &   &  \cdots &  \cdots &  \cdots &  \cdots \\
        &  &  &  &  &  &  &  &  &  & A_{r-1}^T &  &  \\
     \end{array}
   \right)$$
\bigskip
\noindent obtained by deleting the columns $\{\phi(i)n+i,i=0,1,\ldots,n-1\}$. Hence we get (up to the sign) the determinant of the matrix  obtained by $\left(
                                                                                                        \begin{array}{c}
                                                                                                          A_0^T \\
                                                                                                          A_1^T \\
                                                                                                          \vdots \\
                                                                                                          A_{r-2}^T \\
                                                                                                        \end{array}
                                                                                                      \right)
$ by taking the columns $\{i\in \Z_n\colon \phi(i)\neq r-1\}$ and the rows $\phi^{-1}(j)$ in $A_{j}^T$ $\forall \, j=0,1,\ldots,r-2$. We observe that if $\phi(i)=r-1$ for all $i \in \{0,1,\ldots,n-1\}$, then we get 1 as coefficient of $N(x_{r-1})=x_{r-1}^{1+q+\cdots + q^{n-1}}$. So for every $\phi$ not constantly equal to $r-1$, let $I:=\{i \in\{0,1,\ldots,n-1\}|\phi(i) \neq r-1\}$ and let $\psi: I \to \{0,1,\ldots,r-2\}$ be such that $\psi(i)=\phi(i)$ $\forall \, i \in I$, then the coefficient of  $\prod_{i=0}^{n-1} x_{\phi(i)}^{q^i}$  (up to the sign) is $\det C_{\psi}$. We can get $X(W)=V(G)$ in the same way and by Theorem \ref{main1}, $G=\lambda F$ for some $\lambda \in \F_{q^n}^*$. Since for both $F$ and $G$ the coefficient of $x_{r-1}^{1+q+\cdots + q^{n-1}}$
 is 1, we must have $F=G$ and hence the statement.
\end{proof}

In the following, by \emph{principal minors} we will mean the minors of a square matrix obtained by considering the same subset of indexes for the rows and the columns.

For $r=2$, Theorem \ref{main2} reads as follows.

\begin{theorem}\label{main3}
Let $U$ and $W$ be two $\F_q$-vector spaces of dimension $n$ such that $L_U$ and $L_W$ do not contain $P_{\infty}=(0,1)$. Let $A$ and $B$  be the Dickson matrices associated to $U$ and $W$ respectively. Then $L_U=L_W$ if and only if $A$ and $B$ have equal corresponding principal minors.
\end{theorem}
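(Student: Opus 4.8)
The plan is to specialize Theorem \ref{main2} to the case $r=2$ and then translate the abstract condition on the determinants $\det C_\psi = \det C'_\psi$ into the statement about principal minors. When $r=2$, the only choices for the codomain of $\psi$ are $\{0\}$, so every function $\psi : I \to \{0,\dots,r-2\} = \{0\}$ is the unique constant map. This collapses the family of matrices dramatically: for each subset $I \subseteq \{0,1,\dots,n-1\}$ there is exactly \emph{one} function $\psi$, and the matrix $C_\psi$ is simply the submatrix of $A_0 = A$ obtained by selecting rows and columns indexed by $I$. First I would make this observation explicit and record that, with a single Dickson matrix $A$ in hand (since $r-2 = 0$, the ordered set $\{A_0,\dots,A_{r-2}\}$ reduces to the singleton $\{A\}$), the matrix $C_\psi$ is precisely the principal submatrix $A[I,I]$.

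The crux is then to check that the entries of $C_\psi$ as defined in Theorem \ref{main2} really do form a principal submatrix. Recall from Theorem \ref{main2} that the $ij$-component of $C_\psi$ is the $ij$-component of $A_{\psi(j)}$, with $i,j \in I$. Since $\psi$ is forced to be the constant map to $0$, every column is drawn from the same matrix $A_0 = A$, and the row and column index sets both equal $I$. Hence $C_\psi = A[I,I]$, which is exactly the principal minor of $A$ associated to $I$, and likewise $C'_\psi = B[I,I]$. The condition $\det C_\psi = \det C'_\psi$ for all $I$ and all $\psi$ therefore reduces, without loss, to the condition that $\det A[I,I] = \det B[I,I]$ for every $I \subseteq \{0,1,\dots,n-1\}$, which is precisely the statement that $A$ and $B$ have equal corresponding principal minors.

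I would close by invoking Theorem \ref{main2} directly: the biconditional there states that $L_U = L_W$ if and only if $\det C_\psi = \det C'_\psi$ for all admissible $I$ and $\psi$. Substituting the identification $C_\psi = A[I,I]$, $C'_\psi = B[I,I]$ established above yields that $L_U = L_W$ if and only if $A$ and $B$ have equal corresponding principal minors, which is the desired equivalence. The only point requiring a little care is a bookkeeping one — confirming that the indexing convention in Theorem \ref{main2}, where columns are selected via $\psi(j)$ and rows are indexed by the common set $I$, genuinely produces a \emph{principal} (square, symmetric-index) submatrix rather than an arbitrary square submatrix. Because $r=2$ leaves no freedom in the choice of $\psi$, this is automatic, and I expect no substantive obstacle: the content of the theorem is entirely carried by Theorem \ref{main2}, and for $r=2$ the proof is essentially a matter of unwinding definitions.
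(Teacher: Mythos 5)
Your proposal is correct and matches the paper's primary justification: the paper states Theorem \ref{main3} as the $r=2$ specialization of Theorem \ref{main2} (``For $r=2$, Theorem \ref{main2} reads as follows''), and your unwinding of the definitions --- $\psi$ forced to be the constant map to $0$, so $C_{\psi}=A[I|I]$ and $C'_{\psi}=B[I|I]$ --- is exactly the intended reduction. (The paper also offers, in Section 4, an independent proof via the polynomial $p_A(\lambda)=\det(A-\mathrm{diag}(\lambda,\lambda^q,\ldots,\lambda^{q^{n-1}}))$, but your route is the one the paper itself uses to deduce the statement.)
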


\section{Dickson matrices with equal corresponding principal minors}

As it is shown in the previous section, vectors subspaces determining the same linear set of the projective line correspond to Dickson matrices with equal corresponding principal minors.

In the following,  we will index the rows and columns  of an $n \times n$ matrix by $\Z_n$.
Let $\alpha, \beta$ be non-empty subsets of $\Z_n$. Then by $A[\alpha|\beta]$ we denote the submatrix of a matrix $A$ determined by the rows in the set $\alpha$ and the columns in the set $\beta$.

The matrix $A$ is \emph{reducible} if, up to a permutation of the rows and the columns, it is a block upper triangular matrix, that is equivalent to say that if there is a non-trivial partition $\{\alpha, \beta\}$ of $\Z_n$  such that $A[\alpha|\beta]$ is an all-zero matrix.

A matrix $B$ is \emph{diagonally similar} to $A$ if and only if there exists an invertible diagonal matrix $D$ such that $B=D^{-1}AD$. Diagonally similar matrices clearly have equal corresponding principal minors. It has been investigated when that condition is necessary.

The most general results about that are the following.

\begin{theorem}\cite[Th. 3]{HartfLoewy}\label{sim1}
Let $A$ be an irreducible $n \times n$ matrix with entries in a field $\F$,  $n \leq 3$, and let $B$ be a matrix of the same order such that $A$ and $B$ have equal corresponding principal minors. Then $A$ is diagonally similar to $B$ or to $B^T$.
\end{theorem}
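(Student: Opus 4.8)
The plan is to reduce everything to $n=3$, the cases $n\le 2$ being immediate: for $n=1$ the equal $1\times 1$ minors give $A=B$, and for $n=2$ irreducibility forces $a_{12},a_{21}\neq 0$, so with $t=b_{12}/a_{12}$ and $D=\operatorname{diag}(1,t)$ one checks, using $a_{12}a_{21}=b_{12}b_{21}$, that $D^{-1}AD=B$. So assume $n=3$. First I would extract the concrete invariants hidden in the hypothesis. The $1\times 1$ minors give $a_{ii}=b_{ii}$; the $2\times 2$ principal minors, combined with this, give the two-cycle products $a_{ij}a_{ji}=b_{ij}b_{ji}$ for every pair $i\neq j$; and the full determinant, after removing the diagonal and two-cycle contributions, gives $a_{12}a_{23}a_{31}+a_{13}a_{32}a_{21}=b_{12}b_{23}b_{31}+b_{13}b_{32}b_{21}$. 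Writing $p,q$ (resp.\ $p',q'$) for the two oriented triangle products of $A$ (resp.\ $B$), this says $p+q=p'+q'$, while multiplying the three two-cycle relations gives $pq=p'q'$; hence $\{p,q\}=\{p',q'\}$, since both pairs are the roots of the same quadratic over $\F$.

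Since transposition fixes the diagonal and every two-cycle product but interchanges $p\leftrightarrow q$, after replacing $B$ by $B^{T}$ if necessary I may assume $p=p'$ and $q=q'$. Thus $A$ and $B$ now agree on all diagonal entries, on all two-cycle products, and on both oriented triangle products. The heart of the plan is then to deduce that the supports coincide, i.e.\ $a_{ij}=0\iff b_{ij}=0$. Two-way pairs ($a_{ij}a_{ji}\neq 0$) are immediately two-way in $B$ because the two-cycle product is nonzero, and conversely. The delicate cases, where irreducibility is genuinely used, are the one-way and empty pairs, and here I would invoke that an irreducible $3\times 3$ matrix has a strongly connected digraph.

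Concretely, strong connectivity yields two facts. First, every one-way edge $i\to j$ of $A$ (so $a_{ij}\neq 0$, $a_{ji}=0$) lies on a directed triangle $i\to j\to k\to i$ with all three entries nonzero; its oriented triangle product, being $p$ or $q$, is nonzero, so the corresponding product in $B$ is nonzero, forcing $b_{ij}\neq 0$, after which the vanishing two-cycle product kills the reverse edge $b_{ji}=0$. Second, if a pair $\{i,j\}$ carries no edge of $A$, strong connectivity forces the other two pairs to be two-way, both triangle products vanish, and the relation $p'=q'=0$ together with $b_{ik},b_{kj}\neq 0$ forbids any edge of $B$ on $\{i,j\}$. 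Running through the few strongly connected patterns on three vertices confirms that the supports of $A$ and $B$ agree.

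With equal supports, diagonal, and cycle products secured, the construction of $D$ is routine. Since the underlying graph of $A$ is connected, I would fix a spanning tree, set $d_{\mathrm{root}}=1$, and define $d_j/d_i=b_{ij}/a_{ij}$ along the (oriented) tree edges; this is well defined and nonzero because supports match. For every remaining directed edge the required identity $b_{ij}=a_{ij}\,d_j/d_i$ then holds automatically: for the reverse of a two-way edge it is exactly $a_{ij}a_{ji}=b_{ij}b_{ji}$, and for the edge closing a triangle it is exactly $p=p'$ (or $q=q'$). Hence $D^{-1}AD=B$, which after the earlier possible transposition means $A$ is diagonally similar to $B$ or to $B^{T}$. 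I expect the main obstacle to be the third step, pinning down the zero pattern of $B$: this is the one place where the minor identities alone do not suffice and one must exploit irreducibility, whereas the final tree-and-cycle construction is mechanical.
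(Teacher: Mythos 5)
The paper does not prove this statement at all --- it is quoted verbatim from Hartfiel--Loewy \cite[Th.~3]{HartfLoewy} and used as a black box --- so there is no internal proof to compare against. Your argument is a correct, self-contained proof. The reductions are sound: the cases $n\le 2$ are handled correctly (for $n=2$ irreducibility does force both off-diagonal entries of $A$ to be nonzero, and the single diagonal scaling works); for $n=3$ the extraction of the invariants $a_{ii}=b_{ii}$, $a_{ij}a_{ji}=b_{ij}b_{ji}$, and $p+q=p'+q'$, $pq=p'q'$ from the principal minors is right, and the conclusion $\{p,q\}=\{p',q'\}$ via the common quadratic $X^2-(p+q)X+pq$ is valid over any field. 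You correctly identify the one genuinely delicate step --- matching the zero patterns --- and your use of strong connectivity there is airtight on three vertices: a one-way edge $i\to j$ forces the return path $j\to k\to i$, putting $i\to j$ on a nonvanishing oriented triangle, and an empty pair forces both other pairs to be two-way with both triangle products zero. The final spanning-tree construction of $D$ is the standard Engel--Schneider-type argument (consistency on two-way edges is the two-cycle relation, and the closing edge is the triangle relation), and it goes through in every support configuration. This is essentially the cycle-product approach of the original Hartfiel--Loewy paper, so nothing further is needed.
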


\begin{theorem}\cite{loewy}\label{sim2}
  Let $A$, $B$ be $n\times n$ matrices with entries in a field $\F$ and $n\geq 4$. Let $A$ be irreducible and such that for every partition of $\Z_n$ into subsets $\alpha,\beta$ with $|\alpha|\geq 2$, $|\beta| \geq 2$ then either $\rank A[\alpha|\beta]\geq 2$ or $\rank A[\beta|\alpha]\geq 2$. If $A$ and $B$ have equal corresponding principal minors, of all orders, then $A$  is diagonally similar to $B$ or $B^T$.
\end{theorem}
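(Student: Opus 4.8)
The plan is to translate the hypothesis on principal minors into combinatorial data attached to the weighted digraph of $A$, and then to use irreducibility together with the rank condition to rigidify that data up to a single global ambiguity, namely transposition. Associate to an $n\times n$ matrix $A=(a_{ij})$ the digraph on vertex set $\Z_n$ with an arc $i\to j$ of weight $a_{ij}$, and to each directed cycle $\gamma=(i_1,i_2,\ldots,i_k)$ its \emph{cycle product} $\pi_A(\gamma)=a_{i_1i_2}a_{i_2i_3}\cdots a_{i_ki_1}$. A diagonal similarity $B=D^{-1}AD$ rescales $a_{ij}\mapsto d_i^{-1}a_{ij}d_j$ and hence leaves every cycle product unchanged, whereas transposition reverses each cycle, replacing $\pi_A(\gamma)$ by $\pi_A(\gamma^{-1})$. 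So the invariants to control are exactly the cycle products, and the target reduces to showing that equal principal minors force the cycle products of $A$ and $B$ to agree either for all cycles or for all reversed cycles.

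First I would extract these invariants from the minors via the permutation expansion $\det A[\alpha|\alpha]=\sum_{\sigma\in S_\alpha}\mathrm{sgn}(\sigma)\prod_{i\in\alpha}a_{i\sigma(i)}$, in which each $\sigma$ factors into disjoint cycles, so the minor is a signed sum of products of cycle products and diagonal entries. Orders $1,2,3$ give the base relations: $a_{ii}=b_{ii}$; then $a_{ij}a_{ji}=b_{ij}b_{ji}$, so all $2$-cycle products match; and at order $3$ the genuinely new contribution is the orientation-symmetric sum $\pi_A(\gamma)+\pi_A(\gamma^{-1})$ for each triangle $\gamma$, which must equal the corresponding sum for $B$. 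Since $\pi_A(\gamma)\pi_A(\gamma^{-1})=\prod_{(i,j)\in\gamma}a_{ij}a_{ji}$ is a product of $2$-cycle products and is therefore already matched, the unordered pair $\{\pi_A(\gamma),\pi_A(\gamma^{-1})\}$ equals $\{\pi_B(\gamma),\pi_B(\gamma^{-1})\}$. An induction on the order of the minor, peeling off the already-determined shorter cycles from $\det A[\alpha|\alpha]$ with $\alpha$ the support of $\gamma$, extends this to every cycle: for each $\gamma$ one has either $\pi_A(\gamma)=\pi_B(\gamma)$ (call $\gamma$ \emph{straight}) or $\pi_A(\gamma)=\pi_B(\gamma^{-1})$ (call $\gamma$ \emph{flipped}). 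The matched $2$-cycle products also pin down the common support of $A$ and $B$ up to the inevitable transpose ambiguity.

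The heart of the argument is to prove that the straight/flipped dichotomy is \emph{global}: either every cycle is straight or every cycle is flipped. Granting this, I would finish with an Engel--Schneider type reduction. If all cycles are straight, then $A$ and $B$ are irreducible matrices with identical cycle products; since irreducibility makes the digraph strongly connected, one can normalize a spanning strongly connected skeleton of arcs by a diagonal similarity, and matching cycle products then forces $B=D^{-1}AD$. If instead all cycles are flipped, the same argument applied to $B^T$ yields $A$ diagonally similar to $B^T$. Strong connectivity is precisely what makes the skeleton normalization consistent.

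The main obstacle is establishing the globality of the dichotomy, and this is exactly where the rank-$2$ hypothesis is indispensable. The danger is a partition $\{\alpha,\beta\}$ of $\Z_n$ across which one could reverse only the cycles meeting $\beta$ in a prescribed way while keeping the others straight, producing a matrix diagonally similar to neither $B$ nor $B^T$. I would show that such an independent flip forces both crossing blocks $A[\alpha|\beta]$ and $A[\beta|\alpha]$ to have rank at most $1$: a straight--flipped mismatch realized consistently on cycles crossing the cut requires the arcs between $\alpha$ and $\beta$ to factor through a single channel, i.e. each crossing block to be a rank-one outer product. Since the hypothesis forbids $\rank A[\alpha|\beta]\le 1$ and $\rank A[\beta|\alpha]\le 1$ from holding simultaneously, no admissible independent flip exists; a propagation argument then shows that the flip status of one cycle determines that of every overlapping cycle, and irreducibility spreads this determination over all of $\Z_n$. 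Verifying the rank-one characterization of an admissible independent flip, and checking that overlapping cycles propagate the flip status, are the two technical points I expect to absorb most of the work.
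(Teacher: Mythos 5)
First, note that the paper does not prove this statement at all: it is quoted verbatim from \cite{loewy}, so the only meaningful comparison is with Loewy's original argument. Your sketch correctly identifies the right invariants (cycle products in the weighted digraph, their invariance under diagonal similarity and their reversal under transposition), the right endgame (an Engel--Schneider type normalization along a strongly connected skeleton), and the right role of the rank hypothesis (excluding what the present paper calls Loewy-reducibility). In that sense the skeleton matches the known proof strategy.

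However, there is a genuine gap at the step you treat as routine: the induction extending the order-$3$ conclusion to all cycles. For a support $\alpha$ with $|\alpha|\geq 4$ there are several distinct Hamiltonian cycles on $\alpha$ (three already for $|\alpha|=4$), and after peeling off the lower-order contributions the principal minor $\det A[\alpha|\alpha]$ only yields the equality of the \emph{sums} $\sum_{\gamma}\bigl(\pi_A(\gamma)+\pi_A(\gamma^{-1})\bigr)=\sum_{\gamma}\bigl(\pi_B(\gamma)+\pi_B(\gamma^{-1})\bigr)$ over all Hamiltonian cycles $\gamma$ of $\alpha$, not the per-cycle matching of unordered pairs $\{\pi_A(\gamma),\pi_A(\gamma^{-1})\}=\{\pi_B(\gamma),\pi_B(\gamma^{-1})\}$ that your straight/flipped dichotomy presupposes. (Your peeling argument is fine for permutations that split into several disjoint shorter cycles, since their total contribution symmetrizes into products of matched sums and products; it is precisely the full-length cycles that do not separate.) Disentangling individual cycle products from these aggregated identities, and then globalizing the dichotomy, is where essentially all of Loewy's lengthy case analysis lives; your proposal asserts both the rank-one characterization of an admissible independent flip and the propagation of flip status across overlapping cycles without a mechanism, and the second of these cannot even get started until the per-cycle matching is secured. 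A secondary, smaller issue: matching the $2$-cycle products $a_{ij}a_{ji}=b_{ij}b_{ji}$ does not pin down the support when one of the two entries vanishes ($a_{ij}\neq 0$, $a_{ji}=0$ versus $b_{ij}=0$, $b_{ji}\neq 0$ both give product zero), so the ``transpose ambiguity'' in the zero pattern is local to each pair $\{i,j\}$ and must itself be globalized before the skeleton normalization can be applied; this also needs the irreducibility and rank hypotheses, not just strong connectivity.
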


So we introduce the following definition.

\begin{definition}
  A square matrix $A$ of order $n$ such that there exists a partition of $\mathbb{Z}_n$ into subsets $\alpha,\beta$ with $|\alpha|\geq 2$, $|\beta| \geq 2$ with $\rank A[\alpha|\beta]=\rank A[\beta|\alpha]=1$ is said to be  Loewy-reducible.
\end{definition}

We will denote by $diag(d_0,d_1,\cdots,d_{n-1})$ a diagonal matrix $D$ such that $D[i|i]=d_i$ $\forall \, i \in \Z_n$.

We will address the problem for Dickson matrices.

Let $A$ be the Dickson matrix associated to $f(x)=\displaystyle \sum_{i=0}^{n-1}a_ix^{q^i}$, that is $A$ is an $n \times n$ matrix such that $A[i|j]=a_{j-i}^{q^i}$. Hence

$$
A=\left(
    \begin{array}{cccc}
      a_0 & a_1 & \cdots & a_{n-1} \\
      a_{n-1}^q & a_0^q &  \cdots & a_{n-2}^q \\
       \cdots &  \cdots & \cdots &  \cdots \\
      a_1^{q^{n-1}} & a_2^{q^{n-1}} &  \cdots & a_0^{q^{n-1}} \\
    \end{array}
  \right)
$$

and

$$
A^T=\left(
    \begin{array}{cccc}
      a_0 & a_{n-1}^q & \cdots & a_1^{q^{n-1}} \\
      a_1 & a_0^q &  \cdots & a_2^{q^{n-1}} \\
       \cdots &  \cdots & \cdots &  \cdots \\
      a_{n-1} &  a_{n-2}^q&  \cdots & a_0^{q^{n-1}} \\
    \end{array}
  \right).
$$
 Also, we have that $A[\alpha|\beta]^q=A[\alpha+1|\beta+1]$ for all subsets $\alpha, \beta$ of $\Z_n$, hence $\rank A[\alpha|\beta]=\rank A[\alpha+h|\beta+h]$ $\forall \, h \in \Z_n$. The matrix

 \noindent $B=$ $diag(\lambda,\lambda^q,\ldots,\lambda^{q^{n-1}})^{-1}\, A \, diag(\lambda,\lambda^q,\ldots,\lambda^{q^{n-1}})$ is a Dickson matrix too and $B[0|i]=a_i\lambda^{q^i-1}$.

We will need the following result about the rank of Dickson matrices.

\begin{theorem}\label{rank}
Let $A$ be a Dickson matrix and let $A_{k-1}$ be $A[0,1,\ldots,k-1|0,1,\ldots,k-1]$. Then $\rank A= \max \{k : \det A_{k-1} \neq 0\}$.
\end{theorem}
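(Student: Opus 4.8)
The plan is to set $r=\rank A$ and reduce the whole statement to the single claim that the leading principal submatrix $A_{r-1}=A[0,\dots,r-1\mid 0,\dots,r-1]$ is nonsingular. Indeed, if $\det A_{k-1}\neq 0$ then the $k\times k$ leading block is invertible and forces $\rank A\geq k$, so $k\leq r$; hence $\max\{k:\det A_{k-1}\neq 0\}\leq r$, and the reverse inequality is exactly the claim $\det A_{r-1}\neq 0$. We may assume $r<n$, the case $r=n$ being trivial since then $A$ is invertible and $\det A_{n-1}=\det A\neq 0$.

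The first step is to translate linear relations among the rows $R_0,\dots,R_{n-1}$ of $A$ into composition identities for $f(x)=\sum_{i=0}^{n-1}a_ix^{q^i}$. Writing $g=\sum_i c_i x^{q^i}$, the $l$-th entry of $\sum_i c_i R_i$ is $\sum_i c_i a_{l-i}^{q^i}$, which is precisely the coefficient of $x^{q^l}$ in $g\circ f$ reduced modulo $x^{q^n}-x$; thus $\sum_i c_i R_i=0$ if and only if $g\circ f=0$ as $\F_q$-endomorphisms, i.e.\ $\operatorname{im}f\subseteq\ker g$. Since $\dim_{\F_q}\operatorname{im}f=\rank f=\rank A=r$, the linearized polynomials vanishing on $\operatorname{im}f$ are exactly the left composition-multiples of the subspace polynomial $M_{\operatorname{im}f}$, which has $q$-degree $r$. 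Hence the minimal $q$-degree of a nonzero such $g$ equals $r$: the rows $R_0,\dots,R_{r-1}$ are $\F_{q^n}$-independent, and there is a relation $\sum_{i=0}^{r}c_iR_i=0$ with $c_r\neq 0$. Applying the automorphism $x\mapsto x^{q^h}$ together with the shift rule $A[\alpha\mid\beta]^q=A[\alpha+1\mid\beta+1]$ yields, for every $h\in\Z_n$, the relation $\sum_{i=0}^{r}c_i^{q^h}R_{i+h}=0$ (indices mod $n$); since $c_r\neq 0$ this is a linear recurrence expressing each row through the $r$ preceding ones. A direct check shows $A^T$ is again a Dickson matrix, namely the one associated to the adjoint polynomial $\sum_m a_{n-m}^{q^m}x^{q^m}$, and $\rank A^T=r$; so the same argument applied to $A^T$ shows that the columns $C_0,\dots,C_{r-1}$ of $A$ are $\F_{q^n}$-independent.

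The decisive step is then to prove $\det A_{r-1}\neq 0$. Suppose not; the columns of the $r\times r$ block are dependent, so there exist scalars $\mu_0,\dots,\mu_{r-1}$, not all zero, with $\sum_{j=0}^{r-1}\mu_j A[i\mid j]=0$ for $i=0,\dots,r-1$. Put $P(i)=\sum_{j=0}^{r-1}\mu_j A[i\mid j]$, so that by hypothesis $P(0)=\dots=P(r-1)=0$. Restricting each row relation $\sum_{i'=0}^{r}c_{i'}^{q^h}R_{i'+h}=0$ to the first $r$ columns and pairing with $(\mu_j)$ gives $\sum_{i'=0}^{r}c_{i'}^{q^h}P(i'+h)=0$ for every $h$; since $c_r\neq 0$ this solves $P(r+h)$ in terms of $P(h),\dots,P(r-1+h)$. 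Running $h=0,1,\dots,n-1-r$ and using the initial vanishing, we obtain $P(i)=0$ for all $i\in\Z_n$, i.e.\ $\sum_{j=0}^{r-1}\mu_j C_j=0$ as full columns, contradicting the independence of $C_0,\dots,C_{r-1}$ established above. Hence $\det A_{r-1}\neq 0$, which completes the proof. I expect the propagation of the partial column relation through the row recurrence — verifying that the cyclic indexing closes up consistently and that the iteration indeed reaches every index $i$ before wrapping around — to be the delicate point that must be written out with care.
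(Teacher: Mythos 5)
Your proof is correct, and it reaches the conclusion by a genuinely different route from the paper. The paper proceeds in two steps: first it invokes Csajb\'ok's $q$-subresultant result to get $\det A[0,\ldots,k-1\,|\,n-k,\ldots,n-1]\neq 0$ when $\rank A=k$, hence independence of the first $k$ rows; then, assuming $\det A_{k-1}=0$, it produces a nonzero vector $\mathbf b$ in the ($\tau$-invariant) row space having $k$ consecutive zero coordinates, observes that the Dickson matrix built on the orbit of $\mathbf b$ has rank at least $k+1$, and derives $\rank A\geq k+1$, a contradiction. You instead translate row relations into annihilators of $\operatorname{im}f$, use the subspace polynomial to get both the independence of $R_0,\ldots,R_{r-1}$ and a monic length-$(r+1)$ cyclic recurrence among the rows, obtain column independence by passing to $A^T$ (correctly identified as the Dickson matrix of the adjoint), and then propagate a hypothetical dependence of the truncated first $r$ columns through the recurrence to a dependence of the full columns. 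The propagation you flagged as delicate does close up: for $h=0,\ldots,n-1-r$ the window $\{h,\ldots,h+r-1\}$ never wraps past $n-1$, so induction gives $P(r),\ldots,P(n-1)=0$ with no circularity. Both arguments ultimately rest on the same elementary fact --- a nonzero linearized polynomial of $q$-degree $d$ has kernel of dimension at most $d$ --- but yours is self-contained (no appeal to the external corner-minor result), at the cost of being somewhat longer; the paper's is shorter but leans on the cited lemma and on the slightly less transparent ``consecutive zeros force high rank'' step.
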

\begin{proof}
Let $\tau$ be the $\F_q$-linear map $(a_0,a_1,\ldots,a_{n-1})\mapsto (a_{n-1},a_1,\ldots,a_{n-2})^q$ of $\F_{q^n}^n$ and let $\mathbf{a}$ be $(a_0,a_1,\ldots,a_{n-1})$. Then the rows of $A$ form a full orbit under the action of $\tau$ of a vector $\mathbf{a} \in \F_{q^n}^n$ and $\rank A = \dim\langle \mathbf{a}^{\tau^i},i=0,1,\ldots, n-1 \rangle$. Let $\rank A = k >0$. Then, by \cite{CS}, $\det A[0,1,\ldots,k-1|n-k,n-k+1,\ldots,n-1] \neq 0$. Hence, the first $k$ rows of $A$ are linearly independent, that is

$$\dim\langle \mathbf{a}^{\tau^i},i=0,1,\ldots, k-1 \rangle=k.$$

\noindent Let $\F_{q^n} = W_1 \oplus W_2$, where $W_1=\{(x_0,x_1,\ldots,x_n) \in \F_{q^n} : x_i=0 \, \forall \, i \leq k-1\}$ and $W_2= \{(x_0,x_1,\ldots,x_n) \in \F_{q^n} : x_i=0 \, \forall \, i \geq k\}$. Suppose that $\det A_{k-1}=0$, then the projection of $\langle \mathbf{a}^{\tau^i},i=0,1,\ldots, k-1 \rangle$ from $W_2$ onto $W_1$ has dimension lower than $k$, that is $\langle \mathbf{a}^{\tau^i},i=0,1,\ldots, k-1 \rangle \cap W_2 \neq 0$. Let $\mathbf{b} \in \langle \mathbf{a}^{\tau^i},i=0,1,\ldots, k-1 \rangle \cap W_2$, $\mathbf{b} \neq \mathbf{0}$. Since $\langle \mathbf{a}^{\tau^i},i=0,1,\ldots, n-1 \rangle$ is set-wise fixed by $\tau$, we get

$$\langle \mathbf{b}^{\tau^i},i=0,1,\ldots, n-1 \rangle \subseteq \langle \mathbf{a}^{\tau^i},i=0,1,\ldots, n-1 \rangle.$$

\noindent The non-zero vector $\mathbf{b}$ is in $W_2$, hence it  has at least $k$ consecutive zero components. Therefore, it is easy to see that the rank of the Dickson matrix having the orbit of $\mathbf{b}$ as rows has rank at least $k+1$. Hence,

$$ k+1 \leq \dim \langle \mathbf{b}^{\tau^i},i=0,1,\ldots, n-1 \rangle \leq \dim \langle \mathbf{a}^{\tau^i},i=0,1,\ldots, n-1 \rangle,$$

\noindent a contradiction.
\end{proof}

\begin{corollary}\label{io}
  Dickson matrices with equal corresponding principal minors have the same rank.
\end{corollary}
\begin{proof}
  It is an easy application of Theorem \ref{rank}.
\end{proof}

Let $d$ be a divisor of $n$, by $d\mathbb{Z}_n$ we will denote the subgroup of index $d$ of $(\mathbb{Z}_n,+)$, that is $d\mathbb{Z}_n=\{0,d,2d,\ldots,n-d\}$.

A function $f: x \in \F_{q^n} \mapsto \displaystyle \sum_{i=0}^{n-1}a_ix^{q^i} \in \F_{q^n}$ is clearly $\F_q$-linear, but we will say that $\F_{q^d}$ is the \emph{maximum field of linearity} for $f(x)$ if $d$ is the largest divisor of $n$ such that $f(x)$ is $\F_{q^d}$-linear.

\begin{proposition}\label{rank0}
  If the Dickson matrix $A$ associated to $f(x)$ is reducible, then  $f(x)$ is $\F_{d}$-linear, for some $d|n$, $d >1$.
\end{proposition}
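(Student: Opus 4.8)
The plan is to translate both notions involved — being $\F_{q^d}$-linear and being reducible — into purely combinatorial conditions on the \emph{support} $T:=\{i\in\Z_n : a_i\neq 0\}$ of the coefficient sequence of $f$, and then to exploit the fact that the zero pattern of a Dickson matrix depends only on differences in $\Z_n$, so that reducibility becomes the failure of strong connectivity of a Cayley (circulant) digraph.

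First I would record the algebraic characterization of the field of linearity. Expanding $f(\lambda x)=\sum_i a_i\lambda^{q^i}x^{q^i}$ and comparing with $\lambda f(x)=\sum_i a_i\lambda x^{q^i}$, one sees that $f$ is $\F_{q^d}$-linear (for $d\mid n$) precisely when $a_i(\lambda^{q^i}-\lambda)=0$ for every $\lambda\in\F_{q^d}$ and every $i$. Since the fixed field of $x\mapsto x^{q^i}$ in $\F_{q^n}$ is $\F_{q^{\gcd(i,n)}}$, this holds if and only if $a_i=0$ whenever $d\nmid i$, i.e. if and only if $T\subseteq d\Z_n$. Setting $d:=\gcd\big(T\cup\{n\}\big)$, we therefore get that $f$ is $\F_{q^d}$-linear and $\F_{q^d}$ is its maximum field of linearity; the proposition is thus equivalent to showing that reducibility of $A$ forces $d>1$.

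Next I would translate reducibility. By definition there is a non-trivial partition $\{\alpha,\beta\}$ of $\Z_n$ with $A[\alpha|\beta]=0$. Because $A[i|j]=a_{j-i}^{q^i}$, this says exactly that $a_{j-i}=0$ for all $i\in\alpha$, $j\in\beta$, that is $(\beta-\alpha)\cap T=\emptyset$, where $\beta-\alpha=\{\,j-i : i\in\alpha,\ j\in\beta\,\}$. In other words, in the digraph on $\Z_n$ with an arc $i\to j$ whenever $j-i\in T$ (the Cayley digraph $\mathrm{Cay}(\Z_n,T)$), there is no arc from $\alpha$ to $\beta$, so $A$ is reducible precisely when this digraph is not strongly connected.

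The combinatorial core is then a contrapositive argument showing that $d=1$ forces $A$ irreducible. Assume $A[\alpha|\beta]=0$ for a non-trivial partition and fix $i_0\in\alpha$. If some $t\in T$ satisfied $i_0+t\in\beta$, then $t=(i_0+t)-i_0\in\beta-\alpha$, contradicting $(\beta-\alpha)\cap T=\emptyset$; hence $i_0+t\in\alpha$ for every $t\in T$, so $\alpha$ is closed under translation by each element of $T$. When $d=\gcd(T\cup\{n\})=1$ the subgroup $\langle T\rangle$ equals $\Z_n$; iterating the closure we obtain $\alpha\supseteq i_0+\langle T\rangle=\Z_n$, so $\beta=\emptyset$, a contradiction. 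Hence reducibility gives $d>1$, and by the first step $f$ is $\F_{q^d}$-linear with $d>1$, as claimed. The only delicate point is this last passage from ``$\alpha$ closed under adding each generator of $T$'' to ``$\alpha=\Z_n$'', which rests on the standard fact that in a finite group the subsemigroup generated by a set coincides with the subgroup it generates; once that is invoked, the reachability argument is immediate.
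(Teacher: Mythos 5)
Your proof is correct and follows essentially the same route as the paper: the key observation in both is that reducibility forces $i+h\in\alpha$ for every $i\in\alpha$ and every $h$ in the support of $f$, so the support lies in the (necessarily proper, hence of the form $d\Z_n$ with $d>1$) stabilizer of $\alpha$ in $(\Z_n,+)$, which is exactly the condition for $\F_{q^d}$-linearity. The Cayley-digraph/strong-connectivity packaging and the contrapositive formulation are only cosmetic differences.
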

\begin{proof}
If there exists a non-trivial partition of $\mathbb{Z}_n$ into subsets $\alpha,\beta$  such that $\rank A[\alpha|\beta]=0$, then $a_{j-i}=0$ for all $i \in \alpha$ and $j \in \beta$. If $a_h=0$ for all $h \neq 0$, then $f(x)$ is $\F_{q^n}$-linear. Suppose that there exists an $h \in \Z_n \setminus \{0\}$ such that $a_h \neq 0$, hence $h$ cannot be written as $h=j-i$ for some $i \in \alpha$ and $j \in \beta$. Therefore, $h+\alpha=\alpha$ and hence there exists a non-trivial stabilizer of $\alpha$ in $(\Z_n,+)$, say $ d\mathbb{Z}_n$ for some $d\mid n$ and $h \in d\mathbb{Z}_n$. If $d=1$, then $\alpha=\Z_n$, a contradiction to our hypothesis. Then $d>1$ and $f(x)$ turns out to be $\F_{q^d}$-linear.
\end{proof}

\begin{proposition}\label{diagonal}
If $A$ and $B$ are diagonally similar Dickson matrices, then

$$B=diag(\lambda,\lambda^q,\ldots,\lambda^{q^{n-1}})^{-1}A \, diag(\lambda,\lambda^q,\ldots,\lambda^{q^{n-1}})$$

\noindent for some $\lambda \in \F_{q^n}^*$, and the associated linearized polynomials $f(x)$ and $g(x)$ are such that $g(x)=\lambda^{-1}f(\lambda x)$. If $\F_{q'}$, $\F_q \subseteq \F_{q'} \subseteq \F_{q^n}$, is the maximum field of linearity for $f(x)$ and $g(x)$, then $\lambda$ is unique modulo $\F_{q'}^*$.
\end{proposition}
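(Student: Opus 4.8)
The plan is to begin with an arbitrary invertible diagonal matrix realizing the similarity and to show that, up to a harmless scalar, its diagonal is forced to be a Frobenius orbit $(\lambda,\lambda^q,\dots,\lambda^{q^{n-1}})$. Write $D=\mathrm{diag}(d_0,\dots,d_{n-1})$ with all $d_i\neq 0$ and $B=D^{-1}AD$, so that $B[i|j]=\frac{d_j}{d_i}a_{j-i}^{q^i}$. Normalising $e_i:=d_i/d_0$ (hence $e_0=1$), the first row of $B$ reads off the Dickson coefficients $b_j=e_ja_j$, and imposing that $B$ itself be a Dickson matrix, i.e.\ $B[i|j]=b_{j-i}^{q^i}$, I would cancel $a_{j-i}^{q^i}$ to obtain the cocycle relation
\[
\frac{e_{i+k}}{e_i}=e_k^{q^i}\qquad (i\in\Z_n,\ k\in S),
\]
where $S=\{k:a_k\neq 0\}$ is the support of $f$.

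First I would propagate this relation from $S$ to the subgroup $\langle S\rangle$ of $(\Z_n,+)$: taking $i=0$ reads off $e_{k_1+k_2}=e_{k_1}e_{k_2}^{q^{k_1}}$, and a short check then gives the relation for $k_1+k_2$ and for $-k$, so that it holds for every $k\in\langle S\rangle$. Since $f$ is $\F_{q^d}$-linear exactly when $d\mid i$ for all $i\in S$, the maximum field of linearity $\F_{q'}=\F_{q^d}$ corresponds to $d=\gcd(S\cup\{n\})$, whence $S\subseteq d\Z_n$ and $\langle S\rangle=d\Z_n$. In particular the relation holds for $k=d$; iterating it around the cycle $jd$, $j=0,\dots,n/d$, yields $e_{jd}=e_d^{\,1+q^d+\cdots+q^{(j-1)d}}$ and, on closing up at $jd\equiv 0$, the consistency condition $e_d^{\,(q^n-1)/(q^d-1)}=1$.

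The crux is then producing the single scalar $\lambda$. Because $d\mid n$ we have $q^d-1\mid q^n-1$, so the image of $x\mapsto x^{q^d-1}$ on $\F_{q^n}^*$ is exactly the subgroup $\{y\in\F_{q^n}^*:y^{(q^n-1)/(q^d-1)}=1\}$. The consistency condition places $e_d$ in this image, so $\lambda^{q^d-1}=e_d$ is solvable in $\F_{q^n}^*$. For $i=jd\in d\Z_n\supseteq S$ I then get $\lambda^{q^i-1}=(\lambda^{q^d-1})^{1+q^d+\cdots+q^{(j-1)d}}=e_{jd}=e_i$, so $a_i\lambda^{q^i-1}=a_ie_i=b_i$ for every $i$. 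Thus $\tilde D^{-1}A\tilde D$ with $\tilde D=\mathrm{diag}(\lambda,\lambda^q,\dots,\lambda^{q^{n-1}})$ shares its first row with $B$; as a Dickson matrix is determined by its first row, $B=\tilde D^{-1}A\tilde D$, and reading $b_i=a_i\lambda^{q^i-1}$ as an identity of coefficients is precisely $g(x)=\lambda^{-1}f(\lambda x)$.

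For the uniqueness clause I would take two admissible scalars $\lambda_1,\lambda_2$ and set $\mu=\lambda_1\lambda_2^{-1}$; then $\mu^{q^i-1}=1$, i.e.\ $\mu\in\F_{q^{\gcd(i,n)}}$, for every $i\in S$, so $\mu\in\bigcap_{i\in S}\F_{q^{\gcd(i,n)}}=\F_{q'}$, while conversely any $\mu\in\F_{q'}^*$ satisfies $\mu^{q^i-1}=1$ whenever $d\mid i$ and hence carries one admissible $\lambda$ to another. This yields uniqueness modulo $\F_{q'}^*$. I expect the main obstacle to be the existence step: recognising that the cyclic consistency condition forced by the Dickson shape of $B$ is identical to the solvability condition for $\lambda^{q^d-1}=e_d$ is exactly what makes the special diagonal form attainable; the propagation to $\langle S\rangle$ and the uniqueness argument are then routine.
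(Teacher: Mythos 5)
Your proof is correct and follows essentially the same route as the paper's: both extract from the Dickson condition on $B=D^{-1}AD$ a multiplicative cocycle on the diagonal entries of $D$ over the support of $f$, propagate it along the subgroup $d\Z_n$ generated by that support, and produce the single scalar $\lambda$ by solving $\lambda^{q^d-1}=e_d$ (the paper phrases this last step as a rank-one factorization of the $2\times (n/d)$ array formed by the $\lambda_i$ with $i\in d\Z_n$, which is the same solvability statement). If anything, you are more explicit than the paper at the two delicate points, namely the cyclic consistency condition $e_d^{(q^n-1)/(q^d-1)}=1$ that guarantees $\lambda$ exists, and the use of the full support $S$ (so that $\langle S\rangle=d\Z_n$ with $\F_{q^d}$ the maximum field of linearity) rather than the subgroup generated by a single nonzero coefficient.
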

\begin{proof}
The statement is trivially true if $\F_{q'} = \F_{q^n}$. Suppose that $\F_{q'} \subsetneq \F_{q^n}$ and hence $A$ is not diagonal.
Let $D=diag(\lambda_0,\lambda_1,\ldots,\lambda_{n-1})$ with $\lambda_i \neq 0 \, \forall \, i=0,1,\ldots,n-1$, and let $B=D^{-1}AD$, hence $B[i|j]=a_{j-i}^{q^i}\lambda_i^{-1}\lambda_j$. By $B[i+h|j+h]=B[i|j]^{q^h}$, we get $a_{j-i}^{q^{i+h}}\lambda_{i+h}^{-1}\lambda_{j+h}=a_{j-i}^{q^{i+h}}(\lambda_i^{-1}\lambda_j)^{q^h}$, so for every $i,j$ such that $a_{j-i} \neq 0$,
$\det \left(                                                                                                                                                               \begin{array}{cc}
\lambda_{i+h} & \lambda_{j+h} \\
\lambda_i^{q^h} & \lambda_j^{q^h} \\
\end{array}
\right)=0
$.

Let $a_h\neq 0$ for some $h \neq 0$ and let $d\mathbb{Z}_n$ be the subgroup of $(\Z,+)$ generated by $h$.  Let $M=\left(
         \begin{array}{cccc}
           \lambda_0 & \lambda_{d} & \cdots & \lambda_{n-d} \\
           \lambda_{n-d}^{q^d}& \lambda_{0}^{q^d} & \cdots & \lambda_{n-2d}^{q^d} \\
         \end{array}
       \right)
$, then $\rank M=1$.  Therefore, $(\lambda_0,\lambda_d,\ldots,\lambda_{n-d})=\mu (\lambda,\lambda^{q^d},\ldots,\lambda^{q^{n-d}})$ for some $\mu, \lambda \in \F_{q^n}^*$. Hence,

$$B[d\mathbb{Z}_n|d\mathbb{Z}_n]=diag(\lambda,\lambda^{q^d},\ldots,\lambda^{q^{n-d}})^{-1} A[d\mathbb{Z}_n|d\mathbb{Z}_n]\, diag(\lambda,\lambda^{q^d},\ldots,\lambda^{q^{n-d}}).$$

\noindent By $B[d\mathbb{Z}_n+i|[d\mathbb{Z}_n+i]=B[d\mathbb{Z}_n|[d\mathbb{Z}_n]^{q^i}$ $\forall \, i=1,2,\ldots,d-1$, we get

$$B=diag(\lambda,\lambda^q,\ldots,\lambda^{q^{n-1}})^{-1}A \, diag(\lambda,\lambda^q,\ldots,\lambda^{q^{n-1}}).$$

\noindent Suppose that there exist $\lambda_1,\lambda_2$ such that

$$B=diag(\lambda_i,\lambda_i^q,\ldots,\lambda_i^{q^{n-1}})^{-1}A \, diag(\lambda_i,\lambda_i^q,\ldots,\lambda_i^{q^{n-1}})$$

\noindent for $i=1,2$. Then $\lambda_1^{q^j-1}=\lambda_2^{q^j-1}$ $\forall \, j$ such that $a_{j} \neq 0$, hence $\lambda_2=\nu \lambda_1$, where $\nu \in \F_{q'}^*$.
\end{proof}

Let $A'$ be $A+diag(d_0,d_1,\cdots,d_{n-1})$. If $\alpha$ and $\beta$ are disjoint subsets of $\Z_n$, then $A[\alpha|\beta]=A'[\alpha,\beta]$. Hence, if $\rank A' = 1$, then $A$ is  Loewy-reducible. A rank 1 Dickson matrices is of the form:

$$
A=\left(
    \begin{array}{cccc}
      \lambda a & \lambda a^q & \cdots & \lambda a^{q^{n-1}} \\
       \lambda^q a & \lambda^q a^q & \cdots & \lambda^q a^{q^{n-1}} \\
      \cdots  & \cdots  & \cdots  & \cdots \\
     \lambda^{q^{n-1}} a & \lambda^{q^{n-1}} a^q & \cdots & \lambda^{q^{n-1}} a^{q^{n-1}} \\
    \end{array}
  \right)
=\left(
   \begin{array}{c}
     \lambda \\
     \lambda^q \\
     \dots \\
     \lambda^{q^{n-1}} \\
   \end{array}
 \right)\left(
          \begin{array}{c}
            a,  a^q, \cdots, a^{q^{n-1}} \\
          \end{array}
        \right)
$$

\noindent with $a\lambda \neq 0$.
We will show that for rank 1 Dickson matrices, in Theorem \ref{sim2} the hypothesis of being irreducible can be dropped.

\begin{proposition}\label{club}
Let $A$   be a Dickson matrix such that $\rank (A+diag(d,d^q,\cdots,d^{q^{n-1}}))=1$ and let $B$ a Dickson matrix such that  $A$ and $B$ have equal corresponding principal minors, of all orders, then $A$  is diagonally similar to $B$.
\end{proposition}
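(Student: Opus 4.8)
The plan is to reduce the whole statement to a single rank computation. Since $A+\mathrm{diag}(d,d^q,\dots,d^{q^{n-1}})$ is a Dickson matrix of rank $1$, the normal form recalled just before the proposition shows it equals $R$ with $R[i|j]=\lambda^{q^i}a^{q^j}$ for some $\lambda,a\in\F_{q^n}^*$; writing $w:=\lambda a\neq 0$ and $D:=\mathrm{diag}(d,d^q,\dots,d^{q^{n-1}})$ we have $A=R-D$. A diagonal similarity $\mathrm{diag}(\mu,\dots,\mu^{q^{n-1}})^{-1}A\,\mathrm{diag}(\mu,\dots,\mu^{q^{n-1}})$ fixes $D$, replaces $(\lambda,a)$ by $(\lambda\mu^{-1},a\mu)$, and preserves the product $w$; hence the claim is \emph{equivalent} to showing that $B+D$ is again a rank-$1$ Dickson matrix whose off-diagonal generator has the same product $w$. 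I stress that Theorem \ref{sim2} cannot be invoked here: every off-diagonal block $A[\alpha|\beta]$ is a restriction of the outer product $(\lambda^{q^i})(a^{q^j})$ and so has rank $1$, whence $A$ is Loewy-reducible as soon as $n\geq 4$. Circumventing this is exactly the content of the proposition.

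The engine of the argument is an exact formula for the principal minors. For $S\subseteq\Z_n$ the matrix $A[S|S]=uv^{T}-D[S|S]$ is a rank-$1$ update of a diagonal matrix, with $u=(\lambda^{q^i})_{i\in S}$ and $v=(a^{q^j})_{j\in S}$, so the matrix determinant lemma gives
\[\det A[S|S]=(-1)^{|S|}\Big(\prod_{i\in S}d^{q^i}\Big)\Big(1-\sum_{i\in S}c^{q^i}\Big),\qquad c:=w/d\]
(assuming $d\neq0$). As $B$ has equal corresponding principal minors, $\det B[S|S]$ equals the same expression. I would then expand the perturbed minor by the standard diagonal-expansion identity $\det\big(B[S|S]+D[S|S]\big)=\sum_{T\subseteq S}\big(\prod_{i\in T}d^{q^i}\big)\det B[S\setminus T\mid S\setminus T]$. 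Substituting the formula above and factoring out $\prod_{i\in S}d^{q^i}$ collapses the sum to $\sum_{R\subseteq S}(-1)^{|R|}\big(1-\sum_{i\in R}c^{q^i}\big)$, the finite difference over the Boolean lattice of a function that is constant plus a sum of single-index terms. Such a function is annihilated for $|S|\geq 2$ (the constant part gives $\sum_R(-1)^{|R|}=0$, and each index contributes $\sum_{R\ni i}(-1)^{|R|}=0$ when $|S|\ge2$). Hence $\det\big(B[S|S]+D[S|S]\big)=0$ for all $S$ with $|S|\geq 2$, while for $|S|=1$ it equals $w^{q^i}\neq 0$. This cancellation is the only real obstacle; everything else is formal.

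Finally I would feed this into Theorem \ref{rank}. The matrix $B':=B+D$ is a Dickson matrix (a sum of Dickson matrices), its leading principal minors of order $\geq 2$ all vanish by the previous step, and $B'[0\mid0]=w\neq0$; therefore $\rank B'=1$. By the rank-$1$ normal form, $B'[i|j]=\rho^{q^i}\beta^{q^j}$, and comparing the diagonal of $B=B'-D$ with the already-known value $b_0=w-d$ forces $\rho\beta=w$. Thus $B=R'-D$ has exactly the shape of $A$ with the same product $w$, and taking $\mu=\lambda/\rho$ yields $B=\mathrm{diag}(\mu,\dots,\mu^{q^{n-1}})^{-1}A\,\mathrm{diag}(\mu,\dots,\mu^{q^{n-1}})$, the desired diagonal similarity (the transpose alternative of Theorem \ref{sim2} is ruled out automatically). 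The degenerate case $d=0$, where $A$ itself has rank $1$, is handled in the same spirit but more directly: Theorem \ref{rank} applied to $B$ gives $\rank B=1$ immediately, and the same $\mu=\lambda/\rho$ works.
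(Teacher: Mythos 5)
Your proof is correct and follows essentially the same route as the paper's: perturb both matrices by the same Dickson diagonal $D=\mathrm{diag}(d,d^q,\ldots,d^{q^{n-1}})$, show that $B+D$ again has rank $1$, and then match the two rank-one normal forms through their common diagonal entry to produce the explicit diagonal similarity (the paper phrases this last step as conjugating each of $A+D$ and $B+D$ into a constant-row matrix and comparing diagonals, which is the same computation as your $\rho\beta=w$, $\mu=\lambda/\rho$). The only difference is that where the paper obtains $\rank(B+D)=\rank(A+D)=1$ directly from Corollary \ref{io} applied to the perturbed matrices (which have equal corresponding principal minors by the same diagonal-expansion identity you use), you re-derive this via the matrix determinant lemma and an inclusion--exclusion cancellation over the Boolean lattice --- a more computational but equivalent justification that likewise bottoms out in Theorem \ref{rank}.
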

\begin{proof}
The matrices $A'=A+diag(d,d^q,\cdots,d^{q^{n-1}})$ and $B'=B+diag(d,d^q,\cdots,d^{q^{n-1}})$ have have equal corresponding principal minors too.  By Corollary \ref{io}, $\rank B'=\rank A'=1$, hence

$$A'=\left(
   \begin{array}{c}
     \lambda \\
     \lambda^q \\
     \dots \\
     \lambda^{q^{n-1}} \\
   \end{array}
 \right)\left(
          \begin{array}{c}
            a, a^q, \cdots, a^{q^{n-1}} \\
          \end{array}
        \right)$$

  and

  $$B'=\left(
   \begin{array}{c}
     \mu \\
     \mu^q \\
     \dots \\
     \mu^{q^{n-1}} \\
   \end{array}
 \right)\left(
          \begin{array}{cccc}
            b, b^q, \cdots, b^{q^{n-1}} \\
          \end{array}
        \right)$$

\noindent with $\lambda a\neq 0 \neq \mu b$. Then

 $$
 diag(\lambda,\lambda^q,\cdots,\lambda^{q^{n-1}})^{-1}A' diag(\lambda,\lambda^q,\cdots,\lambda^{q^{n-1}})= \left(
    \begin{array}{cccc}
      \lambda a & \lambda^q a^q & \cdots & \lambda^{q^{n-1}} a^{q^{n-1}} \\
       \lambda a & \lambda^q a^q & \cdots & \lambda^{q^{n-1}} a^{q^{n-1}} \\
      \cdots  & \cdots  & \cdots  & \cdots \\
     \lambda a & \lambda^{q} a^q & \cdots & \lambda^{q^{n-1}} a^{q^{n-1}} \\
    \end{array}
  \right)
 $$

 \noindent and

  $$
 diag(\mu,\mu^q,\cdots,\mu^{q^{n-1}})^{-1}B' diag(\mu,\mu^q,\cdots,\mu^{q^{n-1}})= \left(
    \begin{array}{cccc}
      \mu b & \mu^q b^q & \cdots & \mu^{q^{n-1}} b^{q^{n-1}} \\
       \mu b & \mu^q b^q & \cdots & \mu^{q^{n-1}} b^{q^{n-1}} \\
      \cdots  & \cdots  & \cdots  & \cdots \\
     \mu b & \mu^{q} b^q & \cdots & \mu^{q^{n-1}} b^{q^{n-1}} \\
    \end{array}
  \right)
 $$

\noindent have equal principal minors too, hence the same diagonal, so $\lambda a =\mu b$.   Therefore,

\[ diag(\lambda,\lambda^q,\cdots,\lambda^{q^{n-1}})^{-1}A' \, diag(\lambda,\lambda^q,\cdots,\lambda^{q^{n-1}})=\]
\[ diag(\mu,\mu^q,\cdots,\mu^{q^{n-1}})^{-1}B' \, diag(\mu,\mu^q,\cdots,\mu^{q^{n-1}}),\]

 \noindent hence

\[ diag(\lambda,\lambda^q,\cdots,\lambda^{q^{n-1}})^{-1}A \, diag(\lambda,\lambda^q,\cdots,\lambda^{q^{n-1}})=\]
 \[diag(\mu,\mu^q,\cdots,\mu^{q^{n-1}})^{-1}B \, diag(\mu,\mu^q,\cdots,\mu^{q^{n-1}}),\]

 \noindent and $B$ is diagonally similar to $A$.
\end{proof}

\medskip

The hypothesis of being Loewy-irreducible can be dropped also for $n=4$.

\begin{theorem}\label{quattro}
  Let $A$ be a Dickson irreducible matrix of order 4, and let $B$ a Dickson matrix such that $A$ and $B$ have equal corresponding principal minors. Then $B$ is diagonally similar to $A$ or $A^T$.
\end{theorem}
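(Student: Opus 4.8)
The plan is to complement Theorem \ref{sim2}: if $A$ is Loewy-irreducible the conclusion is immediate from Theorem \ref{sim2}, so the entire content is to treat, by hand, an irreducible $A$ that is Loewy-reducible. Fix then a partition $\{\alpha,\beta\}$ of $\Z_4$ with $|\alpha|=|\beta|=2$ and $\rank A[\alpha|\beta]=\rank A[\beta|\alpha]=1$. Using the shift-invariance $\rank A[\alpha|\beta]=\rank A[\alpha+h|\beta+h]$, up to the cyclic action there are only two partitions to consider, and since transposition preserves both the hypothesis and the desired conclusion I may use it freely. For the \emph{alternating} partition $\{0,2\}\,|\,\{1,3\}$ one has $A[\{0,2\}|\{1,3\}]=\left(\begin{smallmatrix} a_1 & a_3\\ a_3^{q^2}& a_1^{q^2}\end{smallmatrix}\right)$, of rank $1$ iff $a_1^{1+q^2}=a_3^{1+q^2}$; for the \emph{consecutive} partition $\{0,1\}\,|\,\{2,3\}$ one has $A[\{0,1\}|\{2,3\}]=\left(\begin{smallmatrix} a_2 & a_3\\ a_1^{q}& a_2^{q}\end{smallmatrix}\right)$, of rank $1$ iff $a_2^{1+q}=a_1^q a_3$.

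Next I would turn ``equal corresponding principal minors'' into equations for the entries $b_i$ of $B$. The $1\times1$ minors give $b_0=a_0$; using $A[\alpha+1|\beta+1]=A[\alpha|\beta]^{q}$ to identify the minors inside each cyclic orbit, the $2\times2$ principal minors reduce to $b_1b_3^{q}=a_1a_3^{q}$ and $b_2^{1+q^2}=a_2^{1+q^2}$, the $3\times3$ principal minors reduce (after substituting these) to the single relation $b_1^{1+q}b_2^{q^2}+b_2b_3^{q+q^2}=a_1^{1+q}a_2^{q^2}+a_2a_3^{q+q^2}$, and the order-$4$ minor gives $\det B=\det A$. One checks directly that $A^{T}$ is again a Dickson matrix, with associated sequence $a_{-k}^{q^{k}}$, i.e. $(a_0,a_3^{q},a_2^{q^2},a_1^{q^3})$, and that its entries satisfy this system; by Proposition \ref{diagonal} so does every matrix in the diagonal-similarity orbit of $A$ or of $A^{T}$. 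The goal is to show these two orbits exhaust the solutions.

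Before the main computation I would clear the degenerate configurations. If $a_1=a_3=0$ then $f$ is $\F_{q^2}$-linear, and by Proposition \ref{rank0} together with Theorem \ref{dlinear} the problem descends to a pair of irreducible order-$2$ Dickson matrices over $\F_{q^2}$, where Theorem \ref{sim1} gives the claim. In the consecutive case, if one of $a_1,a_3$ vanishes then $a_2=0$ as well and $f$ is a binomial, for which the system above is solved directly and yields $B$ diagonally similar to $A$ or to $A^{T}$. Finally, in the consecutive case with all entries nonzero, $A+\mathrm{diag}(d,d^{q},d^{q^2},d^{q^3})$ has rank $1$ for a suitable $d$ precisely when $N_{q^4|q}(a_1)=N_{q^4|q}(a_2)$; in that club subcase Proposition \ref{club} applies and forces diagonal similarity to $A$.

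This leaves the genuinely new situation, namely $a_1,a_3\neq0$ with $A$ not covered by Proposition \ref{club}. Here I would eliminate $b_3$ through $b_3^{q}=a_1a_3^{q}/b_1$, let $b_2$ run over the $q^2+1$ solutions of $b_2^{1+q^2}=a_2^{1+q^2}$, and reduce the $3\times3$ relation together with $\det B=\det A$ to a multiplicative system over $\F_{q^4}$ in the single unknown $b_1$. The main obstacle is exactly this last step: for an arbitrary Loewy-reducible $4\times4$ matrix the equal-principal-minor system genuinely admits solutions outside the orbits of $A$ and $A^{T}$, so the argument must exploit the rigidity of the Dickson form of $B$ — the fact that the $b_i$ are locked together by the cyclic Frobenius relations — in combination with the determinant equation, to discard every spurious root and force $(b_1,b_2,b_3)$ into the diagonal-similarity orbit of $(a_1,a_2,a_3)$ or of $(a_3^{q},a_2^{q^2},a_1^{q^3})$. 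This yields $B$ diagonally similar to $A$ or to $A^{T}$, completing the proof.
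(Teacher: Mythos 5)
Your setup is sound and, up to normalization, reproduces exactly the system the paper works with: $b_0=a_0$, $b_2^{1+q^2}=a_2^{1+q^2}$, $b_1b_3^q=a_1a_3^q$, the single order-$3$ relation $b_1^{1+q}b_2^{q^2}+b_2b_3^{q+q^2}=a_1^{1+q}a_2^{q^2}+a_2a_3^{q+q^2}$, and $\det A=\det B$. But the proof stops at precisely the point where the theorem is actually proved: you write that the argument ``must exploit the rigidity of the Dickson form \dots to discard every spurious root'' and then assert the conclusion. That is the whole content of the statement, and it is not carried out. The resolution is more concrete and less mysterious than you fear: after normalizing $b_2=a_2$ by a diagonal conjugation and eliminating $b_3$ via $b_3^q=a_1a_3^q/b_1$, the order-$3$ relation multiplied by $b_1^{1+q}$ becomes a quadratic in the single quantity $b_1^{1+q}$ which factors as
\[
a_2^{q^2}\bigl(b_1^{1+q}-a_1^{1+q}\bigr)\bigl(b_1^{1+q}-a_2^{1-q^2}a_3^{q+q^2}\bigr)=0 ,
\]
and the two roots correspond exactly to the diagonal-similarity orbits of $A$ and of $A^T$ (whose first row is $(a_0,a_3^q,a_2^{q^2},a_1^{q^3})$); the elements with $(b_1/a_1)^{q+1}=1$ are precisely those of the form $v^{q-1}$, $v\in\F_{q^2}^*$, which is what produces the conjugating diagonal matrix. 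There are no spurious solutions to discard: the Dickson structure has already been used in collapsing all principal minors to these few equations. The cases $a_2=0$ and $b_1=0$ then need the determinant equation and a separate (short) analysis, which the paper supplies and you only gesture at.

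Two further remarks on the architecture. First, your reduction via Theorem~\ref{sim2} and the alternating/consecutive Loewy partitions is legitimate but unnecessary: the paper's computation is uniform in the irreducible order-$4$ case and never invokes Loewy-reducibility, so the partition analysis only adds cases without shortening the work. Second, the degenerate case $a_1=a_3=0$ that you propose to handle by descent to $\F_{q^2}$ cannot occur, since it makes $A[\{0,2\}|\{1,3\}]$ the zero matrix and hence $A$ reducible, contrary to hypothesis; and the descent you sketch (via Theorem~\ref{dlinear}, which is a statement about linear sets rather than about the minor system) would in any case need justification. As it stands the proposal is a correct reduction to a system of equations plus an unproved claim that the system has no extraneous solutions, so it does not constitute a proof.
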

\begin{proof}
Let $a_i:=A[0|i]$ and $b_i:=B[0|i]$, $i=0,1,2,3$.  By $A[0|0]=B[0|0]$, we get $b_0=a_0$. By $\det A[0,2|0,2]= \det B[0,2|0,2]$, we get $b_2^{q^2+1}=a_2^{q^2+1}$, hence $b_2=u^{q^2-1}a_2$ for some $u \in \F_{q^4}^*$. Then $B'=\, diag(u,u^q,u^{q^2},u^{q^3}) \, B \, diag(u,u^q,u^{q^2},u^{q^3})^{-1}$ has $B'[0|2]=a_2$, so we replace $B$ by $B'$. By $\det A[0,1|0,1]=\det B[0,1|0,1]$, we get
\begin{equation}\label{uno}
 a_1a_3^q=b_1b_3^q
\end{equation}
 and by $\det A[0,1,2|0,1,2]= \det B[0,1,2|0,1,2]$, we get
\begin{equation}\label{due}
a_1^{q+1}a_2^{q^2}+a_2a_3^{q+q^2}=b_1^{q+1}a_2^{q^2}+a_2b_3^{q+q^2}.
\end{equation}
Let $a_2 \neq 0$. Suppose that $b_1 \neq 0$, then by \rif{uno} $b_3^q=\frac{a_1a_3^q}{b_1}$. Plugging that in \rif{due}, we get $b_1^{q+1} \in \{a_1^{q+1},a_2^{1-q^2}a_3^{q+q^2}\}$. If $b_1^{q+1}=a_1^{q+1}$, then $b_1=v^{q-1}a_1$ and $b_3=v^{q-1}a_3$ for some $v \in \F_{q^2}^*$, hence $B=\, diag(v,v^q,v,v^{q}) \, A \, diag(v,v^q,v,v^{q})^{-1}$. If $b_1^{q+1}=a_2^{1-q^2}a_3^{q+q^2}$, then $b_1=v^{q-1}a_2^{1-q}a_3^q$ and $b_3=v^{q-1}a_1^{q^3}a_2^{1-q^3}$ for some $v \in \F_{q^2}^*$, hence  $B=\, diag(\frac{v}{a_2},(\frac{v}{a_2})^q,(\frac{v}{a_2})^{q^2},(\frac{v}{a_2})^{q^3}) \, A^T \, diag(\frac{v}{a_2},(\frac{v}{a_2})^q,(\frac{v}{a_2})^{q^2},(\frac{v}{a_2})^{q^3})^{-1}$. Suppose that $b_1=0$, then $a_1a_3=0$ by \rif{uno}. If $a_1=0$, then, by \rif{due}, $b_3=v^{q-1}a_3$ for some $v \in \F_{q^2}^*$, hence $B=\, diag(\frac{v}{a_2},(\frac{v}{a_2})^q,(\frac{v}{a_2})^{q^2},(\frac{v}{a_2})^{q^3}) \, A^T \, diag(\frac{v}{a_2},(\frac{v}{a_2})^q,(\frac{v}{a_2})^{q^2},(\frac{v}{a_2})^{q^3})^{-1}$. If $a_3=0$, then, by \rif{due}, $b_3=v^{1-q}a_1^{q^3}a_2^{1-q^3}$ for some $v \in \F_{q^2}^*$, hence  $B=$

\noindent $diag(\frac{v}{a_2},(\frac{v}{a_2})^q,(\frac{v}{a_2})^{q^2},(\frac{v}{a_2})^{q^3})\, A^T \, diag(\frac{v}{a_2},(\frac{v}{a_2})^q,(\frac{v}{a_2})^{q^2},(\frac{v}{a_2})^{q^3})^{-1}$. Let $a_2=0$, then $(a_1,a_3)\neq (0,0)$ as $\F_q$ is the maximum field of linearity for the linear function associated to $A$. By $\det A=\det B$, $a_0=b_0$ and \rif{uno}, we get
\begin{equation}\label{tre}
  N_{q^4|q}(a_1)+ N_{q^4|q}(a_3)= N_{q^4|q}(b_1)+ N_{q^4|q}(b_3).
\end{equation}
By \rif{uno} and \rif{tre}, then also $(b_1,b_3)\neq (0,0)$. Let $b_1 \neq 0$, then  by \rif{uno} $b_3^q=\frac{a_1a_3^q}{b_1}$.  Plugging that in \rif{due}, we get $N_{q^4|q}(b_1)=N_{q^4|q}(a_1)$ or $N_{q^4|q}(b_1)=N_{q^4|q}(a_3)$. In the first case we get $b_1=a_1v^{q-1}$, $b_3=a_3v^{q^3-1}$ for some $v \in \F_{q^4}^*$, hence $B=$ $\, diag(v,v^q,v^{q^2},v^{q^3})^{-1} \, A \, diag(v,v^q,v^{q^2},v^{q^3})$. If $N_{q^4|q}(b_1)=N_{q^4|q}(a_3)$, then $b_1=a_3v^{q-1}$, for some $v \in \F_{q^4}^*$. Hence $a_3\neq 0$ and we can set $t:=\frac{v}{a_3}$. So  $b_1=a_3^qt^{q-1}$ and $b_3=a_1^{q^3}t^{q^3-1}$ for $t \in \F_{q^4}^*$, hence
$B=\, diag(t,t^q,t^{q^2},t^{q^3})^{-1} \, A^T \, diag(t,t^q,t^{q^2},t^{q^3})$.
If $b_3\neq 0$, just replace $A$ and $B$ by $A^T$ and $B^T$ respectively.
\end{proof}

We will need the following result to show the feature of Loewy-reducible Dickson matrices.

\begin{lemma}{\cite[Lemma 1]{loewy}}\label{reduction}
 Let $A$ be an irreducible matrix of order $n$ over any field. Suppose that $\{\alpha, \beta\}$ and $\{\gamma,\delta\}$ are partitions of $\mathbb{Z}_n$ such that $\alpha \cap \gamma \neq \emptyset$ and $\beta \cap \delta \neq \emptyset$. If $\rank A[\alpha, \beta]=\rank A[\gamma|\delta]=1$, then $\rank A[\alpha \cap \gamma|\beta \cup \delta]=\rank A[\alpha \cup \gamma|\beta \cap \delta]=1$.
\end{lemma}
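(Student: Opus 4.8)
The plan is to pass to the common refinement of the two partitions and read everything off the rank‑one factorizations. Set $P=\alpha\cap\gamma$, $Q=\alpha\cap\delta$, $R=\beta\cap\gamma$, $S=\beta\cap\delta$; since $\{\alpha,\beta\}$ and $\{\gamma,\delta\}$ are partitions of $\Z_n$, the sets $P,Q,R,S$ partition $\Z_n$, and $\alpha=P\cup Q$, $\beta=R\cup S$, $\gamma=P\cup R$, $\delta=Q\cup S$. The hypotheses $\alpha\cap\gamma\neq\emptyset$ and $\beta\cap\delta\neq\emptyset$ say precisely that $P\neq\emptyset$ and $S\neq\emptyset$, while the two conclusions become $\rank A[P\,|\,Q\cup R\cup S]=1$ and $\rank A[P\cup Q\cup R\,|\,S]=1$. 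I would dispose of the lower bound at once: $\{P,\,Q\cup R\cup S\}$ and $\{P\cup Q\cup R,\,S\}$ are nontrivial partitions of $\Z_n$ (each block is nonempty, using $P\neq\emptyset$ and $S\neq\emptyset$), so irreducibility of $A$ guarantees that neither $A[P\,|\,Q\cup R\cup S]$ nor $A[P\cup Q\cup R\,|\,S]$ is the zero matrix. Hence in each case it suffices to prove the upper bound $\rank\le 1$.

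Next I would record the factorizations. From $\rank A[\alpha|\beta]=1$ there are nonzero vectors $(x_i)_{i\in\alpha}$, $(y_j)_{j\in\beta}$ with $A[i|j]=x_iy_j$ on $\alpha\times\beta$, and from $\rank A[\gamma|\delta]=1$ there are nonzero $(s_i)_{i\in\gamma}$, $(t_j)_{j\in\delta}$ with $A[i|j]=s_it_j$ on $\gamma\times\delta$. The overlap block $A[P|S]$ lies inside both products (as $P\subseteq\alpha\cap\gamma$, $S\subseteq\beta\cap\delta$), giving $x_iy_j=A[i|j]=s_it_j$ for $i\in P$, $j\in S$. If $A[P|S]\neq 0$, choose $i_0\in P$, $j_0\in S$ with $A[i_0|j_0]\neq 0$; then $x_{i_0},y_{j_0},s_{i_0},t_{j_0}$ are all nonzero and $x_iy_{j_0}=s_it_{j_0}$ yields $x_i=c\,s_i$ on $P$ for a nonzero scalar $c$. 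Reading the $P$-row block region by region (its $Q$-part is $s_it_j$, its $R$- and $S$-parts are $x_iy_j$) and using $x_i\propto s_i$ on $P$, every row becomes a scalar multiple of $(x_i)_{i\in P}$, so $\rank A[P\,|\,Q\cup R\cup S]\le 1$; the symmetric relation $y_j\propto t_j$ on $S$ handles the column block $A[P\cup Q\cup R\,|\,S]$.

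The delicate case, which I expect to be the main obstacle, is $A[P|S]=0$. Then the outer products vanish, so $x_iy_j=0$ on $P\times S$ forces ``$x_i=0$ for all $i\in P$'' or ``$y_j=0$ for all $j\in S$'', and likewise $s_it_j=0$ forces ``$s_i=0$ on $P$'' or ``$t_j=0$ on $S$''. Each alternative is a zero block: the vanishing of $x$ on $P$ gives $A[P\,|\,R\cup S]=0$, that of $y$ on $S$ gives $A[P\cup Q\,|\,S]=0$, that of $s$ on $P$ gives $A[P\,|\,Q\cup S]=0$, and that of $t$ on $S$ gives $A[P\cup R\,|\,S]=0$. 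I would rule out two of the four combinations by irreducibility: if $x$ and $s$ both vanish on $P$, then $A[P\,|\,R\cup S]=0$ and $A[P\,|\,Q\cup S]=0$ glue to $A[P\,|\,Q\cup R\cup S]=0$, contradicting the lower bound of the first paragraph; symmetrically, $y$ and $t$ both vanishing on $S$ gives $A[P\cup Q\cup R\,|\,S]=0$, again impossible. Since at least one alternative from each pair must hold, the surviving configuration is one of the two \emph{crossed} pairs.

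Consequently the only possibilities are ($x=0$ on $P$ and $t=0$ on $S$) or its mirror ($y=0$ on $S$ and $s=0$ on $P$). In the first, $A[P\,|\,R\cup S]=0$ annihilates the $R$- and $S$-columns of the $P$-row block, leaving only $A[P|Q]$, an outer product $s_it_j$ of rank $\le 1$, while $A[P\cup R\,|\,S]=0$ annihilates the $P$- and $R$-rows of the $S$-column block, leaving only $A[Q|S]$, an outer product $x_iy_j$ of rank $\le 1$; the mirror configuration is identical after interchanging the two factorizations. Combining each of these upper bounds with the irreducibility lower bound of the first paragraph gives rank exactly one for both blocks, which is the assertion. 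The only genuine work is the bookkeeping of the four regions $P,Q,R,S$ and the verification that, in the degenerate case $A[P|S]=0$, precisely the crossed pairs of vanishing conditions survive.
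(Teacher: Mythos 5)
The paper does not actually prove this lemma: it is imported verbatim as a citation to Loewy's paper (\cite[Lemma 1]{loewy}), so there is no internal proof to compare against. Your argument is a correct, self-contained proof. The common refinement $P=\alpha\cap\gamma$, $Q=\alpha\cap\delta$, $R=\beta\cap\gamma$, $S=\beta\cap\delta$ is the right bookkeeping; the lower bound $\rank\geq 1$ via irreducibility of $A$ applied to the nontrivial partitions $\{P,\,Q\cup R\cup S\}$ and $\{P\cup Q\cup R,\,S\}$ is exactly what is needed; and the case split on whether the overlap block $A[P|S]$ vanishes is handled correctly -- in the nondegenerate case the two outer-product factorizations are glued by the proportionality $x_i=c\,s_i$ on $P$ (resp.\ $y_j\propto t_j$ on $S$), and in the degenerate case your elimination of the two ``uncrossed'' vanishing patterns by irreducibility is sound, after which each target block reduces to a single rank-$\le 1$ outer-product piece ($A[P|Q]$ or $A[Q|S]$) padded with zeros. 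Two cosmetic remarks: the phrase ``every row becomes a scalar multiple of $(x_i)_{i\in P}$'' should read that row $i$ equals $x_i$ times one fixed row vector (i.e.\ the block is the outer product of the column $(x_i)_{i\in P}$ with a common row vector); and in the crossed case with $Q=\emptyset$ the first block would be identically zero, which contradicts the irreducibility lower bound -- this only means that subcase cannot occur, so the conclusion still stands, but it is worth a sentence. Neither point is a gap.
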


\begin{theorem}\label{classification}
Let $A$ be a Loewy-reducible Dickson matrix with respect to the partition $\{\alpha,\beta\}$, $|\alpha| \leq |\beta|$, of $\Z_n$. Then there exists a partition $\{\gamma, \delta\}$ of $\Z_n$ such that either  $\gamma = d\Z_n$, for some $1<d\mid n$, or $|\gamma|=2$,  with respect to which $A$ is Loewy-reducible.
\end{theorem}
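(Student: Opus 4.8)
The plan is to run everything off the cyclic symmetry of Dickson matrices together with Lemma \ref{reduction}. Since $A[\alpha+h|\beta+h]=A[\alpha|\beta]^{q^h}$, the shifted partition $\{\alpha+h,\beta+h\}$ is again one for which $A$ is Loewy-reducible, for every $h\in\Z_n$; in particular $A[\gamma|\delta]$, $A[\delta|\gamma]$, $A[\gamma+h|\delta+h]$ and $A[\delta+h|\gamma+h]$ all have rank $1$. I will assume $A$ irreducible so that Lemma \ref{reduction} applies (if $A$ is reducible then by Proposition \ref{rank0} the associated map is $\F_{q^d}$-linear for some $d>1$, and one is reduced to a Dickson matrix of order $n/d$ over $\F_{q^d}$). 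Among all partitions making $A$ Loewy-reducible, I choose $\{\gamma,\delta\}$ with $k:=\min(|\gamma|,|\delta|)=|\gamma|$ as small as possible, so $2\le k\le n/2$ and no such partition has smaller block of size in $\{2,\dots,k-1\}$. The target is to prove that either $k=2$ or $\gamma$ is a single coset of a subgroup, the latter yielding $\gamma=d\Z_n$ after one shift.

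The engine is two different ways of feeding a shift into Lemma \ref{reduction}. Fix $h$ with $\gamma+h\neq\gamma$ and $\gamma\cap(\gamma+h)\neq\emptyset$, and put $c:=|\gamma\cap(\gamma+h)|\in\{1,\dots,k-1\}$. Applying the lemma to $\{\gamma,\delta\}$ and the shift $\{\gamma+h,\delta+h\}$, and to $\{\delta,\gamma\}$ and $\{\delta+h,\gamma+h\}$ for the transposed block, shows that \emph{provided} $\delta\cap(\delta+h)\neq\emptyset$ both off-diagonal blocks of $\{\gamma\cap(\gamma+h),\,\delta\cup(\delta+h)\}$ have rank $1$; its smaller part has size $c$. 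Applying the lemma instead to $\{\gamma,\delta\}$ and the \emph{complementary} shift $\{\delta+h,\gamma+h\}$, and to $\{\delta,\gamma\}$ and $\{\gamma+h,\delta+h\}$, shows with no extra hypothesis that both blocks of $\{\gamma\setminus(\gamma+h),\,\delta\cup(\gamma+h)\}$ have rank $1$; its smaller part has size $k-c$. By minimality of $k$, neither smaller part can have size in $\{2,\dots,k-1\}$, so each has size $\le 1$: the first gives $c\le 1$ and the second gives $k-c\le 1$. Hence $c=1$ and $k-c=1$, i.e. $k=2$.

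This settles every non-stabilising shift when $k<n/2$, since then $|\delta|>n/2$ and $\delta\cap(\delta+h)\neq\emptyset$ automatically. The only surviving alternative is that $\gamma$ has no non-stabilising difference, i.e. $\gamma-\gamma\subseteq S_\gamma$, where $S_\gamma=\{h:\gamma+h=\gamma\}=e\Z_n$ is the stabiliser. A trivial $S_\gamma$ would give $\gamma-\gamma=\{0\}$ and $k=1$, impossible, so $e>1$. Being $e\Z_n$-invariant, $\gamma$ is a union of cosets of $e\Z_n$, and $\gamma-\gamma\subseteq e\Z_n$ forces these cosets to coincide; thus $\gamma$ is a single coset, and translating by a representative (still a Loewy-reducible partition, by shift invariance) gives $\gamma=e\Z_n=d\Z_n$ with $d=e$.

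The step I expect to be the crux is the \emph{complementary} reduction. The naive intersection reduction alone yields only $c\le1$, which shrinks $\gamma$ to a single point and stalls at size one; one must also test $A$ against the shift of the complementary block, producing a partition whose small part has size exactly $k-c$, and it is the clash $c=1$ versus $c=k-1$ that pins down $k=2$. The one delicate point is the boundary $k=n/2$, where $\delta\cap(\delta+h)$ can be empty for some shifts; there I would check that unless $\gamma$ is already a coset there exists a non-stabilising $h$ with $\delta+h\neq\gamma$ (hence $\delta\cap(\delta+h)\neq\emptyset$), so the same dichotomy applies, the extremal outcomes being precisely $\gamma=d\Z_n$ (with $d=2$) or $|\gamma|=2$.
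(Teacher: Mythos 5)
Your proof is correct and follows essentially the same route as the paper's: shift-invariance of the rank conditions for Dickson matrices combined with Lemma \ref{reduction}, applied both to the shifted partition $\{\gamma+h,\delta+h\}$ and to the complementary pairing $\{\delta+h,\gamma+h\}$, with the same terminal analysis showing that a stalled $\gamma$ must be a single coset of its stabiliser; the paper merely organises this as an iterative descent on the size of the small block rather than your extremal choice of minimal $k$. The boundary worry you flag at $k=n/2$ is in fact vacuous: if $h\in\gamma-\gamma$ then $\gamma+h$ meets $\gamma$, so $\gamma+h\neq\delta$, equivalently $\delta+h\neq\gamma$, and hence $\delta\cap(\delta+h)\neq\emptyset$ automatically.
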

\begin{proof}
 By $\rank A[\alpha|\beta] = \rank A[\beta|\alpha] = 1$, we get  $\rank A[\alpha + h|\beta + h] = $

  \noindent$ \rank A[\beta + h|\alpha + h] = 1$ $\forall \, h \in \Z_n$ and $\{\alpha+h,\beta+h\}$ is also a partition of $\Z_n$. Let $h \in \Z_n$ such that the map $m \mapsto m+h$ does not fix $\alpha$ (hence $\beta$), and such that $\alpha \cap \alpha + h \neq \emptyset \neq \beta \cap \beta +h$. By  $\rank A[\alpha + h|\beta + h] =$ $\rank A[\alpha|\beta]=1$ and Lemma \ref{reduction}, we get $\rank A[\alpha \cap \alpha +h|\beta \cup \beta +h]=1$. By $\rank A[\beta + h|\alpha + h] = \rank A[\beta|\alpha] =1$ and Lemma \ref{reduction}, we get
$\rank A[\beta \cup \beta +h|\alpha \cap \alpha+h]=1$. If $|\alpha \cap \alpha+h| \geq 2$, then $A$ is  Loewy-reducible with respect to $\{\alpha \cap \alpha +h,\beta \cup \beta+h\}$. If $|\alpha \cap \alpha+h|=1$, then $|\alpha \cap \beta+h|=|\alpha|-1$. Also, $\beta\cap \alpha+h \neq \emptyset$ as $\alpha+h \neq \alpha$. By  $\rank A[\beta + h|\alpha + h] =\rank A[\alpha|\beta]=1$ and Lemma \ref{reduction}, we get $\rank A[\alpha \cap \beta +h|\beta \cup \alpha +h]=1$. By $\rank A[\alpha + h|\beta + h] = \rank A[\beta|\alpha] =1$ and Lemma \ref{reduction}, we get $\rank A[\beta \cup \alpha +h|\alpha \cap \beta+h]=1$. If $|\alpha|-1=1$, then $|\alpha|=2$ as in the statement. If $|\alpha|-1 \geq 2$, then $A$ is Loewy-reducible with respect to the partition $\{\alpha\cap \beta +h,\beta \cup \alpha +h\}$. We can apply the reduction step until we get an $\alpha$ such that $\alpha=2$, unless  $\forall \, h \in \Z_n$ not fixing $\{\alpha,\beta\}$, either  $\alpha+h\cap\alpha=\emptyset$ or $\beta\cap \beta+h=\emptyset$. If $\beta\cap \beta+h=\emptyset$, then $\beta+h=\alpha$, hence $|\alpha|=|\beta|=\frac{n}{2}$ and $\alpha=2\Z_n$. If $\alpha+h=\alpha$ or $\alpha +h \cap \alpha=\emptyset$ $\forall \, h \in \Z_n$, then there exists a non-trivial stabilizer of $\alpha$, say $d\Z_n$, such that $\forall \, h \notin d\Z_n$, $\alpha+h\subset \beta$. Let $\overline{\alpha}$ and $\overline{\beta}$ be the image of $\alpha$ and $\beta$ in $\Z_d$, then, $\overline{\alpha}+i=\overline{\alpha}$ only if $i=0$ and  $\overline{\alpha}+i\subset \beta$ $\forall \, i \in \Z_d\setminus \{0\}$. Hence $|\overline{\alpha}|=1$, so $\alpha$ consists of one orbit under the action of $(\Z_d,+)$. Up the action of $(\Z_n,+)$, we can assume that $\alpha=d\Z_n$. In this case, we can not apply Lemma \ref{reduction} to $\{\alpha,\beta\}$ and $\{\alpha+h,\beta+h\}$ nor to  $\{\alpha,\beta\}$  and  $\{\beta+h,\alpha+h\}$.
\end{proof}

\begin{proposition}\label{divisor}
Let $A$ be a  Loewy-reducible Dickson matrix with respect to the partition $\{d\Z_n,\Z_n\setminus d\Z_n\}$, $1<d \mid n$, then the associated linear function is

$$f(x)+\lambda \displaystyle \sum_{i=1}^{d-1}b_iTr_{\F_{q^n}|\F_{q^d}}(ax)^{q^i}$$

\noindent for some $\lambda,a \in \F_{q^n}^*$, $b_i \in \F_{q^d}$ not all zero and $f(x)$ $\F_{q^d}$-linear.
\end{proposition}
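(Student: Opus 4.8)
The plan is to read off, from the two rank-one conditions defining Loewy-reducibility with respect to $\{\alpha,\beta\}=\{d\Z_n,\,\Z_n\setminus d\Z_n\}$, enough multiplicative relations among the coefficients $a_0,\dots,a_{n-1}$ of $f$ to force the stated shape. Split $f=f_0+f_1$, where $f_0=\sum_{k\equiv 0\,(d)}a_kx^{q^k}$ is automatically $\F_{q^d}$-linear and $f_1=\sum_{k\not\equiv 0\,(d)}a_kx^{q^k}$. Since both off-diagonal blocks have rank exactly $1$, they are nonzero, hence $f_1\neq 0$; writing $\bar k$ for the residue of $k$ modulo $d$, the goal is to prove $a_k=\lambda\,b_{\bar k}\,a^{q^k}$ for $k\not\equiv 0\pmod d$, which is equivalent to $f_1(x)=\lambda\sum_{i=1}^{d-1}b_i\,Tr_{\F_{q^n}|\F_{q^d}}(ax)^{q^i}$ once one recognizes $\sum_p(ax)^{q^{dp}}=Tr_{\F_{q^n}|\F_{q^d}}(ax)$.

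First I would exploit $\rank A[\alpha|\beta]=1$, where the entry in the row indexed by $dp$ and column $j\not\equiv 0$ is $a_{j-dp}^{q^{dp}}$. Each such row is a Frobenius image of the data $(a_k)_{k\not\equiv 0}$, hence nonzero because $f_1\neq0$, so the rank-one block is a single outer product; comparing the row indexed by $dp$ with the one indexed by $0$ gives $a_{k+dp}=w_p\,a_k^{q^{dp}}$ for all $k\not\equiv 0$, with a scalar $w_p=u_0/u_p$ that is \emph{independent of the residue class}. Iterating this relation produces the cocycle identity $w_{p+p'}=w_{p'}\,w_p^{q^{dp'}}$ for the cyclic group $\mathrm{Gal}(\F_{q^n}/\F_{q^d})=\langle x\mapsto x^{q^d}\rangle$. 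By Hilbert's Theorem 90 this $1$-cocycle is a coboundary, so $w_p=\beta_0^{q^{dp}-1}$ for some $\beta_0\in\F_{q^n}^*$; putting $\tilde a_k:=\beta_0 a_k$ and $\lambda:=\beta_0^{-1}$, the relation becomes $\tilde a_{k+dp}=\tilde a_k^{q^{dp}}$, so within each residue class the coefficients are Frobenius conjugates of one seed $\tilde a_r$ ($r=1,\dots,d-1$), and $a_k=\lambda\tilde a_k$.

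Next I would feed $\rank A[\beta|\alpha]=1$ into this description. Since $\lambda^{q^i}$ is merely a row factor, the rank of $A[\beta|\alpha]$ equals that of the matrix with entries $\tilde a_{dp'-i}^{q^i}$; a short index computation (tracking that $dp'-i$ lies in the residue class $\overline{-i}=d-\bar i$) rewrites this, after collapsing equal rows, as $X_s^{q^{dp'}}$ with $X_s:=\tilde a_s^{\,q^{-s}}$ and $s=d-\bar i\in\{1,\dots,d-1\}$. Rank one then forces $X_s/X_{s'}\in\F_{q^d}$ for all seeds, i.e. $X_s^{q^d}/X_s=:Q$ is independent of $s$, which is exactly $\tilde a_s^{\,q^d-1}=Q^{q^s}$. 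Computing $N_{\F_{q^n}|\F_{q^d}}(\tilde a_s^{\,q^d-1})=\tilde a_s^{\,q^n-1}=1$ gives $N_{\F_{q^n}|\F_{q^d}}(Q)=1$, so by the norm form of Hilbert 90 there is $a\in\F_{q^n}^*$ with $Q=a^{q^d-1}$. Hence $(\tilde a_s/a^{q^s})^{q^d-1}=1$, i.e. $b_s:=\tilde a_s/a^{q^s}\in\F_{q^d}$; propagating through each class via $\tilde a_{s+dp}=\tilde a_s^{\,q^{dp}}$ and $b_s^{q^{dp}}=b_s$ yields $\tilde a_k=b_{\bar k}a^{q^k}$, whence $a_k=\lambda\,b_{\bar k}a^{q^k}$. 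The $b_i$ are not all zero since $f_1\neq0$, and $f_0$ furnishes the $\F_{q^d}$-linear summand, giving the claimed expression.

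The main obstacle I expect is bookkeeping rather than conceptual: one must track residues modulo $d$ and the attached Frobenius exponents carefully enough that both rank-one hypotheses collapse to the clean statements \emph{``$w_p$ is a $1$-cocycle''} and \emph{``$X_s/X_{s'}\in\F_{q^d}$''}. Once the problem is phrased this way, the two applications of Hilbert 90 — first in cocycle form to normalize the within-class behaviour, then in norm form to synchronize the distinct residue classes through a single element $a$ — are the substantive steps and complete the proof.
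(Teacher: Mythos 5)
Your proof is correct and follows essentially the same route as the paper: the rank-one block $A[d\Z_n\,|\,\Z_n\setminus d\Z_n]$ forces the coefficients within each nonzero residue class mod $d$ to be Frobenius conjugates of a seed up to common scalars, and the transposed block then synchronizes the classes through a single element $a$ with coefficients $b_i\in\F_{q^d}$. Your explicit cocycle/Hilbert~90 normalization of the scalars $w_p$ is in fact more careful than the paper's direct assertion that $(a_i,a_{i+d},\ldots,a_{i+n-d})$ is a single scalar multiple of $(a_i,a_i^{q^d},\ldots,a_i^{q^{n-d}})$, which as literally written only becomes accurate after the $\lambda$-renormalization that you carry out.
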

\begin{proof}
Let $a_i:=A[0|i]$. By $\rank A[d\Z_n|i+d\Z_n]\leq 1$, we get $(a_i,a_{i+d},\ldots,a_{i+n-d})=$ $\lambda_i(a_i,a_i^{q^d},\ldots,a_i^{q^{n-d}})$ $\forall \, i=1,2,\ldots,d-1$. If $a_i=0$ for all $i=1,2,\ldots,d-1$, then  $\rank A[dZ_n|\Z_n\setminus d\Z_n]=0$, a contradiction. Hence there exists $i \in \{1,2,\ldots,d-1\}$ such that $a_i \neq 0$. Also, $\rank A[0,d|1,2,\ldots,d-1]= 1$, and hence $\lambda_j=\lambda_i$ for all $j$ such that $a_j \neq 0$. So we get $(a_i,a_{i+d},\ldots,a_{i+n-d})=$ $\lambda(a_i,a_i^{q^d},\ldots,a_i^{q^{n-d}})$ $\forall \, i=1,2,\ldots,d-1$ for some $\lambda \in \F_{q^n}^*$. If $d=2$, then the associated linear function is $f(x)+\lambda Tr_{q^n|q^2}(a_1^{q^{n-1}}x)^q$. Let $d>2$.  By $\rank A[1,2,\ldots,d-1|d\Z_n]=1$, we get $(a_1,a_2,\ldots,a_{d-1})=(a^qb_1,a^{q^2}b_2,\ldots,a^{q^{d-1}}b_{d-1})$ for some $a \in \F_{q^n}$, $b_i \in \F_{d}$, $i=1,2,\ldots,d-1$. Hence the linear function associated to $A$ is  $f(x)+\lambda \displaystyle \sum_{i=1}^{d-1}b_iTr_{\F_{q^n}|\F_{q^d}}(ax)^{q^i}$, for some $f(x)$ $\F_{q^d}$-linear. If $\lambda a=0$, or  $b_i=0 \forall \, i=1,2,\ldots,d-1$, then $\rank A[dZ_n|\Z_n\setminus d\Z_n]=0$, a contradiction.
\end{proof}

\section{Proof of the main results}

Let $r=2$. Therefore $\dim_{\F_q} U=n$. The linear set $L_U$ is regarded as the set of projective points

	\[\{\la {\bf u} \ra_{\F_{q^n}} \colon {\bf u}\in U\setminus \{{\bf 0} \}\}\subset \PG(1,q^n)\]

\noindent as well as the spread elements of $\PG(2n-1,q)$

\[\Pi_P=\{\lambda (a,b), \lambda \in \F_{q^n} \colon \Pi_P \cap \PG(U,q) \neq \emptyset.\]

\noindent Let $\perp$ be the polarity induced by

\[\mathbf{b}:((x_1,y_1),(x_2,y_2)) \in \F_{q^n}^2 \mapsto Tr_{q^n|q}(x_1y_2-x_2y_1)\in \F_q.\]
\medskip

\noindent Then, the group PSL$(2,q^n)$ is a subgroup of the symplectic group of $\perp$, hence, by the transitivity of   PSL$(2,q^n)$, we can assume \textit{without loss of generality} that the projective point $P_{\infty}(0,1)$ is not in  $L_U$ in the remaining of the Section. Hence, $U=\{(x,f(x)), x \in \F_{q^n}\}$ and $f(x)=\displaystyle \sum_{i=0}^{n-1}a_ix^{q^i}$. We now also assume that $\F_{q}$ is the maximum field of linearity for $L_U$, hence, the greatest common divisor of the integers $\{ i| a_i \neq 0 \}$ must be 1. By \cite{CsMP23}, that implies that there exists at least a point of weight 1 in $L_U$. Let $A$ be the Dickson matrix associated to $f(x)$. Then, by Proposition \ref{rank0}, $A$ is irreducible.

For the sake of completeness, we include here a proof of Theorem \ref{main3} not involving the variety $\mathcal{V}_{2n}$ but only Dickson matrices.

\begin{proof}
Let $U=\{(x,f(x)), x \in \F_{q^n}\}$, $W=\{(x,g(x)), x \in \F_{q^n}\}$ and let $A$ and $B$ be the Dickson matrices associated to $f(x)$ and $g(x)$ respectively. We have $L_U=L_W$. A point $P=(1,a)$ belongs to $L_U$ if and only if $f(x)=ax$ for some non-zero $x \in \F_{q^n}$, hence we must have $\det (A-diag(a,a^q,\ldots,a^{q^{n-1}}))=0$. We easily get

\begin{equation}\label{carpol}
p_A(\lambda)=\det (A-diag(\lambda,\lambda^q,\ldots,\lambda^{q^{n-1}}))=\displaystyle \sum_{I\subseteq \Z_n}\displaystyle\prod_{i \in I}(-1)^{|I|}\lambda^{q^i}\det A[\Z_n\setminus I|\Z_n \setminus I]
\end{equation}

\noindent and hence the leading term is $(-1)^n\lambda^{1+q+\cdots+q^{n-1}}$. The point $P=(1,a)$ has  weight $k$ if and only if $\rank (A-diag(a,a^q,\ldots,a^{q^{n-1}}))=n-k$  and hence, by Theorem \ref{rank}, if and only if
$\det(A[0,1,\ldots,j|0,1,\ldots,j]-diag(a,a^q,\ldots,a^{q^{j}})) =0$ $\forall \, j \geq n-k$ and
$\det(A[0,1,\ldots,n-k-1|0,1,\ldots,n-k-1]-diag(a,a^q,\ldots,a^{q^{n-k-1}})) \neq 0$. Let $A':= A-diag(a,a^q,\ldots,a^{q^{n-1}})$, then $\lambda=0$ is a root of $p_{A'}(\lambda)$ with multiplicity $1+q+q^2+\cdots + q^{k-1}$ and so is $\lambda = a$ for $p_A(\lambda)$. Since $|L_{U}|=\displaystyle \sum_{i=1}^{n}t_i$ and $\displaystyle \sum_{i=1}^{n}t_i\left(\frac{q^i-1}{q-1}\right)=1+q+\cdots + q^{n-1}$, where $t_i$ is the number of points of $L_{U}$ of weight $i$,  all the roots of $p_A(\lambda)$ are in $\F_{q^n}$. Since $p_A(\lambda)$ and $p_B(\lambda)$ have leading coefficient $(-1)^n$, they have the same roots if and only if $p_A(\lambda)= p_B(\lambda)$. By \rif{carpol}, we have the statement.
\end{proof}

If $B$ is diagonally similar to $A$, that is $B= diag(\lambda,\lambda^q,\ldots,\lambda^{q^{n-1}}) A \, diag(\lambda,\lambda^q,\ldots,\lambda^{q^{n-1}})^{-1}$, then, by Proposition \ref{diagonal},
the linear function associated to $B$ is $\lambda f(\lambda^{-1} x)$, hence $W=\{(x,\lambda f(\lambda^{-1} x), x \in \F_{q^n}\}= \lambda U$.

If $B$ is diagonally similar to $A^{T}$, and $W$ is the linear set associated to $B$, then $W=\lambda U^{\perp}$.

A \textit{club} $L_U$ is a rank $n$ linear set of $\PG(1,q^n)$ with one point of weight $n-1$, hence $U=\{(x,ax + \lambda Tr_{q^n|q}(bx)), x \in \F_{q^n}\}$ for some $\lambda, b \in \F_{q^n}^*$ and the projective point of weight $n-1$ is $P=(1,a)$.

In the following, we will denote by $W$ an $\F_q$-vector subspace of dimension $n$ of $V$.

By the results of the previous Section, we immediately get the following ones.

\begin{theorem}
For $n \leq 3$, $L_U=L_W$ if and only if $W=\lambda U$ or $W=\lambda U^{\perp}$ for some $\lambda \in \F_{q^n}^*$.
\end{theorem}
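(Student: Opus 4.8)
The plan is to reduce everything to the Dickson matrix machinery developed above and then invoke the classical principal-minor theorem. First I would dispose of the ``if'' direction, which is essentially immediate. If $W=\lambda U$ then $\la \lambda u \ra_{\F_{q^n}} = \la u \ra_{\F_{q^n}}$ for every $u$, so $L_W=L_U$; and if $W=\lambda U^{\perp}$ then $L_{U^{\perp}}=L_U$ by the polarity argument recalled in the Introduction, while $L_{\lambda U^{\perp}}=L_{U^{\perp}}$, so again $L_W=L_U$.

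For the ``only if'' direction, assume $L_U=L_W$. Using the transitivity of $\mathrm{PSL}(2,q^n)$, which lies inside the symplectic group of $\perp$ and hence commutes with the operation $U\mapsto U^{\perp}$, I would put the configuration in standard position so that $P_{\infty}=(0,1)\notin L_U=L_W$. If the maximum field of linearity of $L_U=L_W$ strictly contains $\F_q$, then since $n\in\{2,3\}$ is prime it must equal $\F_{q^n}$; in that degenerate case $U$ is a one-dimensional $\F_{q^n}$-subspace, $L_U$ is a single point, and the equal-dimension constraint forces $W=U=1\cdot U$, so we are done. Otherwise $\F_q$ is the maximum field of linearity and we may write $U=\{(x,f(x))\}$ and $W=\{(x,g(x))\}$ with associated Dickson matrices $A$ and $B$.

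The core of the argument is then a chain of results already established. By Theorem \ref{main3}, the hypothesis $L_U=L_W$ is equivalent to $A$ and $B$ having equal corresponding principal minors of all orders. Since $\F_q$ is the maximum field of linearity, Proposition \ref{rank0} guarantees that $A$ is irreducible. As $n\leq 3$, Theorem \ref{sim1} now applies and yields that $A$ is diagonally similar to $B$ or to $B^T$; transposing, and using that a diagonal matrix is symmetric and that $A^T$ is again a Dickson matrix, this says equivalently that $B$ is diagonally similar to $A$ or to $A^T$. In each case Proposition \ref{diagonal} upgrades the abstract diagonal conjugator to one of the cyclic shape $\mathrm{diag}(\lambda,\lambda^q,\ldots,\lambda^{q^{n-1}})$, and the translation recorded immediately before the statement turns the two cases into $W=\lambda U$ and $W=\lambda U^{\perp}$ respectively.

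Since the mathematical substance is entirely imported, no genuinely hard computation remains; the only delicate point is the interface between Theorem \ref{sim1} and Proposition \ref{diagonal}. Theorem \ref{sim1} produces merely an arbitrary invertible diagonal conjugator, whereas the geometric conclusion requires the conjugator to have the special cyclic form. I therefore expect the main obstacle to be verifying that Proposition \ref{diagonal} indeed forces this shape once both matrices are Dickson, including the check that $A^T$ is itself a Dickson matrix so that the proposition can legitimately be applied in the transpose case.
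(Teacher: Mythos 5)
Your proposal is correct and follows exactly the paper's route: the paper's proof is simply ``Apply Theorem \ref{sim1}'', relying on the same chain (standard position with $P_{\infty}\notin L_U$, Theorem \ref{main3} for equal principal minors, Proposition \ref{rank0} for irreducibility, Proposition \ref{diagonal} for the cyclic form of the conjugator, and the remarks preceding the statement translating diagonal similarity to $A$ or $A^T$ into $W=\lambda U$ or $W=\lambda U^{\perp}$). The extra points you flag are real but harmless: $A^T$ is indeed again a Dickson matrix (with $k$-th coefficient $a_{n-k}^{q^k}$), and the degenerate case is excluded by the paper's standing assumption that $\F_q$ is the maximum field of linearity.
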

\begin{proof}
Apply Theorem \ref{sim1}.
\end{proof}

\begin{theorem}
For $n=4$, $L_U=L_W$ if and only if $W=\lambda U$ or $W=\lambda U^{\perp}$ for some $\lambda \in \F_{q^n}^*$.
\end{theorem}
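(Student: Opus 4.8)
The plan is to deduce the statement, under the standing assumptions of this section, from the classification of order-$4$ Dickson matrices carried out in Theorem~\ref{quattro}. I would first dispose of the (easy) ``if'' direction, which is already recorded in the Introduction: if $W=\lambda U$ then $L_W=L_U$ at once, while if $W=\lambda U^{\perp}$ then $L_W=L_{U^{\perp}}=L_U$ by the property of the polarity $\perp$ recalled there. Thus only the ``only if'' direction needs an argument.

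For the forward implication, write $U=\{(x,f(x))\colon x\in\F_{q^n}\}$ and $W=\{(x,g(x))\colon x\in\F_{q^n}\}$, which is legitimate since $P_{\infty}\notin L_U=L_W$, and let $A$ and $B$ be the Dickson matrices of $f$ and $g$. Assume first that $\F_q$ is the maximum field of linearity. Then $A$ is irreducible by Proposition~\ref{rank0}, and by Theorem~\ref{main3} the equality $L_U=L_W$ is equivalent to $A$ and $B$ having equal corresponding principal minors. I would then invoke Theorem~\ref{quattro} verbatim: $B$ is diagonally similar to $A$ or to $A^{T}$. In the first case Proposition~\ref{diagonal} turns the diagonal similarity into $g(x)=\lambda f(\lambda^{-1}x)$, that is $W=\lambda U$; in the second case the same proposition, applied to $A^{T}$, yields $W=\lambda U^{\perp}$, exactly as explained immediately before the theorem. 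This settles the generic situation, and here essentially all the work has already been done in proving Theorem~\ref{quattro}.

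The step I expect to cause trouble is the case in which $\F_q$ is \emph{not} the maximum field of linearity, for then $A$ is $\F_{q^d}$-linear for some divisor $d>1$ of $4$, hence reducible, and Theorem~\ref{quattro} does not apply. If $d=4$ the linear set is a single point and $W=\lambda U$ is forced. If $d=2$ the plan is to descend to the subfield $\F_{q^2}$, over which $L_U=L_W$ is an equality of maximum-rank linear sets of $\PG(1,q^4)$ of rank $n'=2$; the already proved case $n\le 3$ then gives $W=\mu U$ or $W=\mu U^{\perp'}$, where $\perp'$ is the polarity of $\PG(1,q^4)$ defined over $\F_{q^2}$. The first alternative is conclusion (1). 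The real obstacle is the second: I must check that for $d=2$, $n=4$ the polarity $\perp'$ coincides with the ordinary $\perp$, so that $W=\mu U^{\perp'}$ is nothing but $W=\lambda U^{\perp}$ and no new family appears. This identification rests on the transitivity $Tr_{q^4|q}=Tr_{q^2|q}\circ Tr_{q^4|q^2}$ together with the non-degeneracy of $Tr_{q^2|q}$ and the $\F_{q^2}$-linearity of $U$; carrying it out cleanly is the heart of the exceptional case, and is precisely the reason the genuinely new ``generalized perp'' family can only arise once $n\ge 6$.
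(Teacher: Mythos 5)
Your argument for the main case is exactly the paper's proof, which consists of the single line ``Apply Theorem~\ref{quattro}'' (together with the remarks preceding it that translate diagonal similarity to $A$ or $A^{T}$ into $W=\lambda U$ or $W=\lambda U^{\perp}$ via Proposition~\ref{diagonal}). The exceptional case you worry about is excluded by the standing assumption, made at the start of Section~4, that $\F_q$ is the maximum field of linearity of $L_U$, so your extra descent to $\F_{q^2}$ is not needed (and, as written, it would also require justifying that $W$ is itself $\F_{q^2}$-linear before the rank-$2$ case can be invoked).
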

\begin{proof}
Apply Theorem  \ref{quattro}.
\end{proof}

\begin{theorem}
If $L_U$ is a club, then $L_U=L_W$ if and only if $W=\lambda U$ for some $\lambda \in \F_{q^n}^*$.
\end{theorem}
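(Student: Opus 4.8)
The plan is to reduce the statement to Proposition~\ref{club}, whose whole point is that the irreducibility hypothesis of Theorem~\ref{sim2} may be dropped precisely when the matrix is a rank-one perturbation of a diagonal. First I would invoke the normalization already fixed at the start of this Section: since $L_U=L_W$ is proper, we may assume $P_\infty=(0,1)\notin L_U=L_W$, so that $U=\{(x,f(x)):x\in\F_{q^n}\}$ and $W=\{(x,g(x)):x\in\F_{q^n}\}$ with associated Dickson matrices $A$ and $B$. Because $L_U$ is a club, its unique point of weight $n-1$ lies in $L_U$ and hence differs from $P_\infty$; thus it is an affine point $P=(1,a)$, and $f(x)=ax+\lambda Tr_{q^n|q}(bx)$ for some $\lambda,b\in\F_{q^n}^*$.

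The key step is to show that $A-\mathrm{diag}(a,a^q,\ldots,a^{q^{n-1}})$ has rank $1$. This follows from the weight--rank dictionary recorded in the proof of Theorem~\ref{main3} (via Theorem~\ref{rank}): a point $(1,a)$ has weight $k$ exactly when $\rank\big(A-\mathrm{diag}(a,a^q,\ldots,a^{q^{n-1}})\big)=n-k$, so weight $n-1$ gives rank $1$. Alternatively one checks it by hand: writing $f(x)=(a+\lambda b)x+\sum_{i=1}^{n-1}\lambda b^{q^i}x^{q^i}$, the $(i,j)$ entry of $A-\mathrm{diag}(a,a^q,\ldots,a^{q^{n-1}})$ equals $\lambda^{q^i}b^{q^j}$, which is manifestly the outer product of $(\lambda^{q^i})_i$ and $(b^{q^j})_j$, hence of rank $1$.

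With this in hand the argument is immediate. Setting $d=-a$, the matrix $A+\mathrm{diag}(d,d^q,\ldots,d^{q^{n-1}})$ has rank $1$. By Theorem~\ref{main3}, the hypothesis $L_U=L_W$ means that $A$ and $B$ have equal corresponding principal minors of all orders. Proposition~\ref{club} then yields that $A$ is diagonally similar to $B$, and Proposition~\ref{diagonal}, together with the consequence already extracted earlier in this Section, forces $g(x)=\lambda f(\lambda^{-1}x)$ for some $\lambda\in\F_{q^n}^*$, that is $W=\lambda U$. The converse implication is the trivial observation, noted in the Introduction, that $W=\lambda U$ gives $L_W=L_U$.

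I do not expect a genuine obstacle here: the theorem is essentially a corollary of Proposition~\ref{club}. The only point requiring care is that the Dickson matrix of a club may itself be Loewy-reducible, so Theorem~\ref{sim2} cannot be applied directly; it is exactly to cover this degenerate situation that Proposition~\ref{club} was proved, and the rank-one computation above is what makes it applicable.
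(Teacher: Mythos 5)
Your proposal is correct and follows exactly the paper's route: the paper's entire proof is ``Apply Proposition~\ref{club}'', and your rank-one computation for $A-\mathrm{diag}(a,a^q,\ldots,a^{q^{n-1}})$ together with the appeal to Theorem~\ref{main3} and Proposition~\ref{diagonal} is precisely the justification left implicit there.
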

\begin{proof}
Apply Proposition \ref{club}.
\end{proof}

We observe that the last three theorems are included in \cite{CsMP}.

We will need  the following result.

\begin{theorem}\label{hyperplane}
Let $U=\{(x,f(x)), x \in \F_{q^n}\}$, $W=\{(y,g(y)), y \in \F_{q^n}\}$,

\noindent $U'=\{(x,f(x)), x \in \F_{q^n}\colon Tr_{q^n|q}(ax)=0 \}$ and $W'=\{(y,g(y)), y \in \F_{q^n} \colon Tr_{q^n|q}(by)=0 \}$, with $a,b \in \F_{q^n}^*$. If $L_U=L_W$ and $L_{U'}=L_{W'}$, then $W=\frac{a}{b} U$.
\end{theorem}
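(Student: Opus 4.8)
The plan is to reduce to the case $a=b$ and then prove $f=g$. First I would exploit scaling: replacing $U$ by $\frac{a}{b}U$ turns $f$ into $\tilde f(x)=\frac{a}{b}f(\frac{b}{a}x)$, leaves the common linear set unchanged, since $L_{\frac{a}{b}U}=L_U=L_W$, and sends $U'$ to $\frac{a}{b}U'=\{\frac ab(x,f(x))\colon Tr_{q^n|q}(ax)=0\}$, which is exactly the section of $\frac ab U$ cut by $Tr_{q^n|q}(b\,\cdot\,)=0$ on the first coordinate; hence $L_{\frac{a}{b}U'}=L_{U'}=L_{W'}$. After renaming $\frac ab U$ as $U$, I may therefore assume $a=b$ and must prove $W=U$, i.e. $f=g$, the conclusion $W=\frac ab U$ being equivalent to this. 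A further common scaling by an element of $\F_{q^n}^*$ even normalises $a=b=1$, so that $U'=\{(x,f(x))\colon Tr_{q^n|q}(x)=0\}$ and likewise for $W'$.

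Next I would record what $L_U=L_W$ already gives. By Theorem \ref{main1} the two linear sets have equal weight at every point, and by Theorem \ref{main3} the Dickson matrices $A$ and $B$ of $f$ and $g$ have equal corresponding principal minors. For a point $P=\la(1,c)\ra$ of weight one, its weight space is $\la x_c\ra_{\F_q}\subset U$ with $f(x_c)=cx_c$, and $\la y_c\ra_{\F_q}\subset W$ with $g(y_c)=cy_c$; every point of weight $k\ge 2$ lies automatically in both $L_{U'}$ and $L_{W'}$, since a weight space of dimension $k$ meets the trace hyperplane in dimension at least $k-1\ge 1$. Consequently the hypothesis $L_{U'}=L_{W'}$ is equivalent to the single statement that for every weight-one point $Tr_{q^n|q}(x_c)=0\iff Tr_{q^n|q}(y_c)=0$.

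The core is then to upgrade this trace condition to the equality of all weight-one spaces, which forces $f=g$. I would first check that the weight-one generators $\{x_c\}$ span $\F_{q^n}$ over $\F_q$: there is at least one weight-one point by \cite{CsMP23}, the weight-$\ge 2$ points are few, and their weight spaces form a union of proper $\F_q$-subspaces whose complement contains an $\F_q$-basis (the only way weight-one generators could fail to span a given trace hyperplane is via a weight-$(n-1)$ point whose space is that hyperplane, a configuration to be dispatched directly). Granting this, I would invoke the classification of Section 3: equal principal minors force $B$ to be diagonally similar to $A$ or to $A^T$, or $A$ to be Loewy-reducible. In the diagonal case $W=\nu U$, so $y_c=\nu x_c$ and the condition reads $Tr_{q^n|q}(x)=0\iff Tr_{q^n|q}(\nu x)=0$ on a spanning set of weight-one generators; since these fill the hyperplane $\{Tr_{q^n|q}(x)=0\}$, the two $\F_q$-functionals $x\mapsto Tr_{q^n|q}(x)$ and $x\mapsto Tr_{q^n|q}(\nu x)$ have the same kernel, hence are proportional, whence $\nu\in\F_q^*$ and $W=\nu U=U$.

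The main obstacle is the remaining cases, where $B$ is diagonally similar to $A^T$ (so $W=\nu U^{\perp}$) or $A$ is Loewy-reducible (the families of Proposition \ref{divisor} and Theorem \ref{classification}). There the $W$-generator of a weight-one point is $\nu x'_c$, with $x'_c$ generating $\ker(\hat f-c\,\mathrm{id})$ for the adjoint $\hat f$ (resp. the corresponding exotic generator), and there is no a priori linear relation between $x_c$ and $x'_c$. The difficulty is exactly to show that one trace hyperplane cannot cut $L_U$ and $L_W$ in the same sublinear set unless these generators agree up to $\F_q$-scalars — that is, one must establish the rigidity of the trace-zero pattern on the spanning set of weight-one points and thereby exclude the perp and Loewy-reducible partners, reducing them to the diagonal case already handled. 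This is where the hyperplane hypothesis does the real work, and I expect it to require the finer interplay between $f$ and $\hat f$ (or the $\F_{q^d}$-substructure in the Loewy-reducible case) rather than a purely formal manipulation.
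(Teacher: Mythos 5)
Your proposal has a genuine gap: the entire argument is only carried out in the case where $B$ is diagonally similar to $A$ (i.e.\ $W=\nu U$), and you explicitly defer the cases $W=\nu U^{\perp}$ and the Loewy-reducible families to ``the finer interplay between $f$ and $\hat f$'' without actually excluding them. But those are precisely the cases the theorem exists to rule out -- e.g.\ for a pseudoregulus pair $U=\{(x,ax^{q^i})\}$, $W=\{(y,by^{q^j})\}$ with $i\neq j$ one has $L_U=L_W$ with equal weights everywhere and all weight-one generators spanning $\F_{q^n}$, so nothing in what you have established so far distinguishes this from the diagonal case; the whole content of the statement is that the extra hyperplane condition kills such pairs, and that step is missing. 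Your reduction of $L_{U'}=L_{W'}$ to the condition $Tr_{q^n|q}(ax_c)=0\iff Tr_{q^n|q}(by_c)=0$ on weight-one points is correct (weight-$\ge 2$ points automatically survive the hyperplane cut on both sides, and the weights agree by Theorem \ref{main1}), but turning that into rigidity of the generators in the perp and Loewy-reducible cases is exactly the hard part, and no mechanism for it is proposed.

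The paper avoids the classification entirely and argues by a direct computation that is worth knowing: writing $\overline{U'(\F_{q^n})}$ for the projection of $U'(\F_{q^n})$ onto $V_0\oplus\cdots\oplus V_{n-2}$, the condition $L_{U'}=L_{W'}$ becomes (exactly as in Theorem \ref{main3}, one dimension down) the equality of all corresponding principal minors of the two $(n-1)\times(n-1)$ matrices
\[
A[\Z_n\setminus\{n-1\}|\Z_n\setminus\{n-1\}]^T-\tfrac{1}{a^{q^{n-1}}}\bigl(a,a^q,\ldots,a^{q^{n-2}}\bigr)^T A[\Z_n\setminus\{n-1\}|n-1]^T
\]
and its analogue for $B$ and $b$. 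Comparing just the diagonals of these matrices, together with the equality of the diagonals of $A$ and $B$ coming from $L_U=L_W$, yields $b_i=a_i\bigl(\tfrac{b}{a}\bigr)^{q^i-1}$ for all $i$, i.e.\ $B=diag(\tfrac{b}{a},\ldots,(\tfrac{b}{a})^{q^{n-1}})^{-1}A\,diag(\tfrac{b}{a},\ldots,(\tfrac{b}{a})^{q^{n-1}})$ and hence $W=\tfrac{a}{b}U$ in one stroke, with no case distinction. If you want to salvage your route, you would need to prove the rigidity statement you flag as the ``main obstacle''; as written, the proposal does not prove the theorem.
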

\begin{proof}
We will adopt the cyclic representation of $\PG(2n-1,q)$ (see Section 2).

Let $A$ and $B$ the Dickson matrices associated to $U$ and $W$, respectively. By Theorem \ref{main3}, $A$ and $B$ have the same principal minors. By \rif{dimension}, we have that for every $P \in \Pi_0=\PG(1,q^n)$,

$$\la P^{\sigma^i}, i=0,1,\ldots, n-1\ra \cap U' \neq \mathbf{0} \Leftrightarrow \la P^{\sigma^i}, i=0,1,\ldots, n-1\ra \cap U'(\F_{q^n}) \neq \mathbf{0}.$$

\noindent Let $\overline{U'(\F_{q^n})}$ be the projection of $U'(\F_{q^n})$ from $\displaystyle \opl_{i \neq 0}V_i$ to $V_0$. Then, clearly, if  $\la P^{\sigma^i}, i=0,1,\ldots, n-1\ra \cap U'(\F_{q^n}) \neq \mathbf{0}$, then  $\la P^{\sigma^i}, i=0,1,\ldots, n-2 \ra \cap \overline{U'(\F_{q^n})} \neq \mathbf{0}$. On the other hand, by \cite[Lemma 4]{CsMP23}, if   $\la P^{\sigma^i}, i=0,1,\ldots, n-2 \ra \cap \overline{U'(\F_{q^n})} \neq \mathbf{0}$, then   $\la P^{\sigma^i}, i=0,1,\ldots, n-1\ra \cap U'(\F_{q^n}) \neq \mathbf{0}$. Hence,

$$\la P^{\sigma^i}, i=0,1,\ldots, n-1\ra \cap U' \neq \mathbf{0} \Leftrightarrow \la P^{\sigma^i}, i=0,1,\ldots, n-2 \ra \cap \overline{U'(\F_{q^n})} \neq \mathbf{0}.$$

\noindent We have that $U'(\F_{q^n})=\{(\mathbf{x}',\mathbf{x}'A^T), \mathbf{x}'=(x_0,x_1,\ldots,x_{n-2},-\frac{1}{a^{q^{n-1}}}\displaystyle\sum_{i=0}^{n-2}a^{q^i}x_i), x_i \in \F_{q^n}\}$. Let $M$ be the matrix

\bigskip

$$\left(
    \begin{array}{ccccc}
        &     a_0-a_{n-1} \frac{a}{a^{q^{n-1}}} & a_{n-1}^q-a_{n-2}^q \frac{a}{a^{q^{n-1}}} & \cdots & a_2^{q^{n-2}}  -a_1^{q^{n-2}}\frac{a}{a^{q^{n-1}}} \\
         &      a_1   -a_{n-1}\frac{a^q}{a^{q^{n-1}}} & a_0^q  -a_{n-2}^q \frac{a^q}{a^{q^{n-1}}} & \cdots & a_3^{q^{n-2}} -a_1^{q^{n-2}}\frac{a^q}{a^{q^{n-1}}} \\
        I_{n-1}  & \vdots & \vdots & \vdots & \vdots \\
         &   \vdots & \vdots & \vdots & \vdots \\
         &    a_{n-2}   -a_{n-1}\frac{a^{q^{n-2}}}{a^{q^{n-1}}} & a_{n-3}^q -a_{n-2}^q \frac{a^{q^{n-2}}}{a^{q^{n-1}}}   & \cdots & a_{0}^{q^{n-2}} -a_1^{q^{n-2}}\frac{a^{q^{n-2}} }{a^{q^{n-1}}}  \\
    \end{array}
  \right),
$$
\bigskip

\noindent that is, $M$ is an $(n-1)\times (2n-2)$ matrix such that its rows are a basis for $\overline{U'(\F_{q^n})}$. Then, $X(U')$ is defined by $\det N=0$, where $N$ is the matrix

 $$
\left(
     \begin{array}{cccccccc}
        &  &  &  &  &  &  &    \\
        &  &  & M &  &  &  &   \\
        &  &  &  &  &  &  &    \\
       x & 0&  \cdots & 0 & y & 0 &  \cdots & 0   \\
       0 & x^q &  \cdots & 0 & 0 & y^q &  \cdots & 0  \\
        \cdots &  \cdots &  \cdots &  \cdots &  \cdots &  \cdots &  \cdots &  \cdots \\
       0 & 0 &  \cdots & x^{q^{n-2}} & 0 & 0 &  \cdots & y^{q^{n-2}}  \\
     \end{array}
   \right).
 $$

\medskip

\noindent The same is true for $W'$, hence, as in Theorem \ref{main3}, $L_{U'}=L_{W'}$ if and only if

\medskip
 $A[\Z_n\setminus \{n-1\}|\Z_n\setminus \{n-1\}]^T-\frac{1}{a^{q^{n-1}}}\left(
                                       \begin{array}{c}
                                         a \\
                                         a^q \\
                                         \vdots \\
                                         a^{q^{n-2}} \\
                                       \end{array}
                                     \right)A[\Z_n\setminus \{n-1\}|n-1]^T$
                                     \medskip

\noindent and
\medskip
$B[\Z_n\setminus \{n-1\}|\Z_n\setminus \{n-1\}]^T-\frac{1}{b^{q^{n-1}}}\left(
                                       \begin{array}{c}
                                         b \\
                                         b^q \\
                                         \vdots \\
                                         b^{q^{n-2}} \\
                                       \end{array}
                                     \right)B[\Z_n\setminus \{n-1\}|n-1]^T$
\medskip

\noindent have the same principal minors. Hence the two matrices have the same diagonal, as well as $A$ and $B$, so

$$
a_0^{q^i}-a_{n-1-i}^{q^i}\frac{a^{q^i}}{a^{q^{n-1}}} = b_0^{q^i}-b_{n-1-i}^{q^i}\frac{b^{q^i}}{b^{q^{n-1}}}= a_0^{q^i}-b_{n-1-i}^{q^i}\frac{b^{q^i}}{b^{q^{n-1}}}, i=0,1,\ldots,n-2.
$$

Therefore, $b_i=a_i(\frac{b}{a})^{q^i-1}$, for $ i=0,1,\ldots,n-2$, hence

\noindent $B=diag (\frac{b}{a},(\frac{b}{a})^q,\ldots,(\frac{b}{a})^{q^{n-1}}) ^{-1} \, A \, diag (\frac{b}{a},(\frac{b}{a})^q,\ldots,(\frac{b}{a})^{q^{n-1}})$ and $W=\frac{a}{b}U$.

\end{proof}

From now on, we will assume that $n \geq 5$.

\begin{theorem}\label{pseudo}
Let $A$ is a  Loewy-reducible  Dickson matrix with respect to the partition $\{\alpha,\beta\}$ with $|\alpha|=2$, and $U=\{(x,f(x)), x \in \F_{q^n}\}$ the associated $\F_q$-vector space. Then one of the following possibilities must occur:
 \begin{enumerate}
   \item there exists $a \in \F_{q^n}$ such that $\rank (A - diag(a,a^q,\ldots,a^{q^n-1}))=1$ and hence $L_U$ is a club;
   \item $L_U$ is of pseudoregulus type;
   \item  $f(x)=ax+ \lambda(b_0Tr_{q^n|q^d}(cx)+b_1Tr_{q^n|q^d}(cx)^q+\cdots + b_{d-1}Tr_{q^n|q^d}(cx)^{q^{d-1}})$ for some $ 1 < d|n$ and $\lambda, c \in \F_{q^n}^*, b_i \in \F_{q^d} \, i=1,2,\ldots, d-1$ not all zero, and $(1,a)$ is a point of $L_U$ of weight at least $n-d$.
 \end{enumerate}
\end{theorem}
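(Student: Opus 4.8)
The plan is to normalize the partition, convert the two rank-one conditions into explicit relations among the coefficients of $f$, and then read off the three cases according to $d=\gcd(m,n)$.

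First I would use the shift-invariance $\rank A[\alpha|\beta]=\rank A[\alpha+h|\beta+h]$ (valid for Dickson matrices because $A[\alpha|\beta]^q=A[\alpha+1|\beta+1]$) to assume $\alpha=\{0,m\}$ with $1\le m\le n-1$. Writing $a_i:=A[0|i]$, the condition $\rank A[\{0,m\}|\beta]=1$ says that rows $0$ and $m$ are proportional on the columns $j\notin\{0,m\}$: either row $0$ vanishes there, in which case $f(x)=a_0x+a_mx^{q^m}$ is a binomial, or there is a constant $c$ with $a_{j-m}^{q^m}=c\,a_j$ for every $j\ne 0,m$. Dually, $\rank A[\beta|\{0,m\}]=1$ yields a constant $c'$ with $a_{m+l}=(c')^{q^l}a_l$ for every $l\ne 0,-m$ (or a vanishing column, again forcing a binomial). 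I would then dispose of the binomial case immediately: since $\F_q$ is the maximum field of linearity we must have $\gcd(m,n)=1$, so after subtracting the linear part $a_0x$ the space $U$ becomes $\{(x,a_mx^{q^m})\}$, projectively equivalent to $\{(x,x^{q^m})\}$, a linear set of pseudoregulus type. This gives case (2), and the degenerate subcases $c=0$ or $c'=0$ produce a binomial as well.

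In the main case $c,c'\ne 0$, set $d=\gcd(m,n)$ and analyse the orbits of $k\mapsto k+m$ on $\Z_n$; each has length $n/d$, the orbit of $0$ being $d\Z_n=\langle m\rangle$. The row recurrence links the coefficients inside each orbit and fails only at the indices $0$ and $-m$, both lying in $d\Z_n$; thus within every non-central orbit the recurrence closes up, and because $\F_q$ is the maximum field of linearity there must be a nonzero coefficient off $d\Z_n$. Travelling once around such an orbit forces the root-of-unity relation whose exponent reduces modulo $q^n-1$ to that of $N_{q^n|q^d}$, i.e. $N_{q^n|q^d}(c)=1$, equivalently $c=\lambda^{q^m-1}$ for some $\lambda\in\F_{q^n}^*$. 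Feeding this back into the row recurrence, and using the column recurrence to handle the two broken links and to tie the different orbits together, I would solve the system and obtain $a_j=\lambda\,b_{\,j\bmod d}\,c^{q^j}$ with $b_i\in\F_{q^d}$ not all zero; equivalently $f(x)=a_0x+\lambda\sum_{i=0}^{d-1}b_i\,Tr_{q^n|q^d}(cx)^{q^i}$. (This form agrees with that produced by Proposition \ref{divisor} for the partition $d\Z_n$, which serves as a consistency check.)

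Finally, the image of $f(x)-a_0x$ lies in $\lambda\F_{q^d}$, which has $\F_q$-dimension $d$, so $\rank\bigl(A-\mathrm{diag}(a_0,a_0^q,\dots,a_0^{q^{n-1}})\bigr)\le d$ and hence the point $(1,a_0)$ has weight $\ge n-d$. If $d=1$ this rank is exactly $1$, so $f$ is a club and we are in case (1); if $d>1$ we are in case (3). The main obstacle is the middle step: the row relation alone does not determine the coefficients, since its recurrence is interrupted at $0$ and $-m$ and never connects distinct orbits, so the real work is to combine the row and column relations together with the norm condition on $c$ in order to force the $b_i$ into $\F_{q^d}$ and to make all orbits share the single parameter $c$.
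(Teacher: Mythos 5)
Your route is genuinely different from the paper's and, in outline, it is correct, but be aware that the step you yourself flag as ``the real work'' is exactly the step the paper's argument is designed to avoid. The paper stays in the cyclic representation: from $\rank A[\beta|\alpha]=1$ it extracts a single nonzero vector $v_0+v_i\in U(\F_{q^n})\cap(V_0\oplus V_i)$, observes that its $\sigma$-orbit forces $\{\lambda v_0+\lambda^{q^{n-i}}v_i^{\sigma^{n-i}}:\lambda\in\F_{q^n}\}$ into $U$, and then the whole trichotomy falls out of the dichotomy $\dim_{\F_{q^n}}\la v_0,v_i^{\sigma^{n-i}}\ra=2$ (pseudoregulus) versus $=1$ (a point of weight $\ge n-d$, hence club or trace form); only at the very end is the second condition $\rank A[\alpha|\beta]=1$ invoked, to force $(b_1,\ldots,b_{d-1})$ into $\lambda\F_{q^d}^{d-1}$. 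Your coefficient-chasing version does close, and here is the identity that does it: writing the row relation as $a_l^{q^m}=c\,a_{l+m}$ and the column relation as $a_{l+m}=(c')^{q^l}a_l$ (both for $l\neq 0,-m$), their combination gives $a_l^{q^m-1}=c\,(c')^{q^l}$ for every such $l$ with $a_l\neq 0$; once Hilbert 90 lets you write $c'=c_0^{q^m-1}$ (using $N_{q^n|q^d}(c')=1$ from a full circuit of a non-central orbit, which exists because $\F_q$ is the maximum field of linearity), this says $(a_l/c_0^{q^l})^{q^m-1}=c$ is \emph{independent of $l$}, and since the solution set of $y^{q^m-1}=c$ is a single coset $\lambda\F_{q^d}^*$ (because $\gcd(q^m-1,q^n-1)=q^d-1$), you get $a_l=\lambda\beta_l c_0^{q^l}$ with $\beta_l\in\F_{q^d}$ all at once, across all orbits and with a common $c_0$; the column recurrence then forces $\beta_{l+m}=\beta_l$, i.e. $\beta_l=b_{l\bmod d}$, with only $a_0$ left free (absorbed into the $ax$ term). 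So your plan is sound, but as written it stops short of the decisive computation, whereas the paper replaces that computation by one geometric observation. What your approach buys is uniformity (everything is read off the matrix) and an explicit consistency check against Proposition 8; what the paper's buys is that the pseudoregulus case emerges without ever assuming $f$ is a binomial -- note that in your write-up case (2) appears only through the degenerate subcases, which is in the end equivalent, but that equivalence is itself a small fact needing the argument above.
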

\begin{proof}
We will adopt the cyclic representation of $\PG(2n-1,q)$ (see Section 2). Let $R=(e_0,e_0A^{T})$, then $\la R,R^{\sigma},\ldots,R^{\sigma^{n-1}} \ra =U(\F_{q^n})$. Up the action of $(\Z_n,+)$, we can assume that $\alpha=\{0,i\}$, hence $\rank A[ j \neq 0,i|0,i]=1$. Hence, $\dim \la R,R^{\sigma^i} \ra \cap V_0\oplus V_i = \dim \la R,R^{\sigma^i} \ra - \rank A[ j \neq 0,i|0,i] = 1$ and there exists $v_0 \in V_0, v_i \in  V_i$ such that $v_0+v_i \in U(\F_{q^n})$. If one of the two vectors is $\mathbf{0}$, then $U(\F_{q^n})$ contains a non-zero vector $v_j$ of $V_j$ for some $j \in \{0,i\}$, hence $\la v_j,v_j^{\sigma},\ldots,v_j^{\sigma^{n-1}}\ra = U(\F_{q^n})$ and $U= \la v_j \ra_{\F_{q^n}}$, that is a $\F_{q^n}$-vector space, a contradiction to the hypothesis that $\F_q$ is the maximum field of linearity. Hence $v_0$ and $v_1$ are both non-zero vectors. Since $U(\F_{q^n})^{\sigma}=U(\F_{q^n})$,  $\la (v_0+v_i)^{\sigma^j}, j=0,1,\ldots, n-1 \ra_{\F_{q^n}}$ is a subspace of $U(\F_{q^n})$, hence $\left\{\displaystyle\sum_{j=0}^{n-1}\lambda^{q^j} (v_0 +v_i)^{\sigma^j}, \lambda \in \F_{q^n}\right\}$ is a subspace of $U$. The projection of $U$ from $\displaystyle\opl_{h >0}V_h$ on $V_0$ contains the $\F_q$-vector space $U'=$ $\{\lambda v_0+ \lambda^{q^{n-i}}v_i^{\sigma^{n-i}}, \lambda \in \F_{q^n}\}$. If $\dim \la v_0,v_i^{\sigma^{n-i}}\ra_{\F_{q^n}} = 2$, then $\dim_{\F_q} U'=n$, hence $U=U'$ and $L_U$ is a linear set of pseudoregulus type. Let $\dim \la v_0,v_i^{\sigma^{n-i}}\ra_{\F_{q^n}} = 1$, hence there exists $\mu \in \F_{q^n}^*$ such that $v_i^{\sigma^{n-i}}=\mu v_0$. Hence the vector space $U'=\{(\lambda+\mu\lambda^{q^{n-i}})v_0, \lambda \in \F_{q^n}\}$ is contained in $U$. Let $P=v_0$. The $\F_q$-linear equation in $\lambda$ $\lambda+\mu\lambda^{q^{n-i}}=0$ has 1 or $q^d$ solutions, where $d=GCD(n,i)$. If $\lambda+\mu\lambda^{q^{n-i}}=0$ only for $\lambda=0$, $w_{L_U}(P)=n$, yielding again a contradiction. Hence, $\lambda+\mu\lambda^{q^{n-i}}=0$ has $q^d$ solutions and $w_{L_U}(P) \geq n-d$. Let  $P=(1,a)$. If $d=1$, then  $w_{L_U}(P)=n-1$, hence $L_U$ is a club and then $\rank (A - diag(a,a^q,\ldots,a^{q^n-1}))=1$. If $d>1$, then $U'$ is an $\F_{q^d}$ vector space of dimension $\frac{n}{d}-1$ and hence  the kernel of $f(x) -ax$ contains the kernel of $Tr_{q^n|q^d}(cx)$ for some $c \in \F_{q^n}^*$. The dimension of the space of the $\F_q$-linear maps with kernel containing a fixed $(n-d)$-subspace is $dn$, hence

\noindent $f(x)-ax \in \{b_0Tr_{q^n|q^d}(cx)+b_1Tr_{q^n|q^d}(cx)^q+\cdots + b_{d-1}Tr_{q^n|q^d}(cx)^{q^{d-1}},b_i \in \F_{q^n}\}$.  In this way, we get $\rank A[\Z_n\setminus \{0,i\}|0,i]=1$. We also must have

\noindent $\rank A[0,i|\Z_n\setminus \{0,i\}]=1$, hence $\det A[0,i|j_1,j_2]=0$ $\forall \, j_1,j_2 \neq 0,i$. Since

 $$A[0,i|j_1,j_2] = \left(
                                                                           \begin{array}{cc}
                                                                             b_{h_1}c^{q^{j_1}} & b_{h_2}c^{q^{j_2}} \\
                                                                             b_{h_1}^{q^i}c^{q^{j_1}} &  b_{h_2}^{q^i}c^{q^{j_2}} \\
                                                                           \end{array}
                                                                         \right)$$

\noindent with $h_i \in \Z_d$ such that $h_i \equiv j_i \mod d$,  $\det A[0,i|j_1,j_2]=0$ if and only if

$$\det \left(
                                                                           \begin{array}{cc}
                                                                             b_{h_1} & b_{h_2} \\
                                                                             b_{h_1}^{q^i} &  b_{h_2}^{q^i} \\
                                                                           \end{array}
                                                                         \right)=0.$$

\noindent As $d=GCD (n,i)$, that implies that $(b_1,\ldots,b_{d-1})=\lambda (b_1',\ldots,b_{d-1}')$, with $b_i' \in \F_{q^d}$ $\forall \, i=1,2,\ldots, d-1$.
\end{proof}

Let $L_U$ a linear set of pseudoregulus type. Then $U=\{\lambda v_0+ \lambda^{q^i} v_1, \lambda \in \F_{q^n}\}$ for some vectors  $v_0,v_1 \in V$ such that $\dim_{\F_{q^n}} \la v_0,v_1\ra=2$ and $1=GCD (n,i)$. Up to the action of $SL(2,q^n)$, we can map $\{v_0,v_1\}$ to $\{\mu_0(1,0), \mu_1(0,1)\}$ for some $\mu_0,\mu_1 \in \F_{q^n}^*$, so that we obtain $\{(\lambda\mu_0,\lambda^{q^i}\mu_1), \lambda \in \F_{q^n}\}$. Finally, by multiplying by $\mu_0^{-1}$, we can assume that $U=\{(x, ax^{q^i}), x \in \F_{q^n}\}$ for some $a \in \F_{q^n}^*$, $i$ such that $GCD (n,i)=1$.
In \cite{CsZ}, it has been proved that for any $W=\{(x,bx^{q^j}),  x \in \F_{q^n}\}$ with $GCD (n,j)=1$ and $N_{q^n|q}(b)=N_{q^n|q}(a)$, $L_W=L_U$. If $j=i$, then $W=\lambda U$ and if $j=n-i$, then $W=\lambda U^{\perp}$. For every $j \neq i,n-1$, $W$ is not equal to $\lambda U$ nor $\lambda U^{\perp}$ for any $\lambda \in F_{q^n}^*$. By \cite[Th.14]{LSZ2015}, we can derive $L_U=L_W$ if and only if $W$ is in the form just described. We include here a proof of the result in our setting.

\begin{theorem}
  Let $U=\{(x,a x^{q^i}), x \in \F_{q^n}\}$,  $a\neq 0$, $GCD(n,i)=1$, and let $W$ be an $n$-dimensional vector space over $\F_q$ such that $L_U=L_W$. Then $W=\{(y,by^{q^j}), y \in \F_{q^n} \}$ with $GCD(n,j)=1$ and $N_{q^n|q}(b)=N_{q^n|q}(a)$.
\end{theorem}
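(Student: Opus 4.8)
The plan is to work entirely with the Dickson matrices and the principal-minor criterion of Theorem \ref{main3}. Since $(0,1)=P_\infty\notin L_U=L_W$ (such a point would force $x=0$), we may write $U=\{(x,f(x))\}$ with $f(x)=ax^{q^i}$ and $W=\{(y,g(y))\}$ with $g(y)=\sum_{k}b_ky^{q^k}$; let $A$ and $B$ be the associated Dickson matrices. First I would record the shape of $A$: its $(k,l)$-entry $a_{l-k}^{q^k}$ is nonzero only when $l-k\equiv i$, so a principal minor $\det A[S|S]$ is nonzero only if the shift $l\mapsto l+i$ maps $S$ onto itself; as $\gcd(i,n)=1$ this happens only for $S=\emptyset$ or $S=\Z_n$. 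Hence \emph{every proper principal minor of $A$ vanishes}, while $\det A=(-1)^{n-1}N_{q^n|q}(a)\neq 0$. By Theorem \ref{main3} the same holds for $B$: all proper principal minors of $B$ vanish and $\det B=\det A\neq 0$, so $B$ has full rank by Corollary \ref{io}.

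The heart of the argument is then a statement about Dickson matrices alone: \emph{a Dickson matrix whose proper principal minors all vanish and whose determinant is nonzero is the matrix of a monomial $by^{q^j}$}. Writing $T=\{k\in\Z_n\setminus\{0\}:b_k\neq 0\}$ for the support of $g$, the $1\times1$ minors give $b_0=0$ and the $2\times2$ minors give $b_kb_{-k}=0$, i.e. $T\cap(-T)=\emptyset$. To force $|T|=1$ I would view $B$ through the Cayley digraph on $\Z_n$ with an arc $k\to k+t$ for each $t\in T$, so that $\det B[S|S]$ is the signed sum over the cyclic covers $\pi$ of $S$ by such arcs. The key observation is that distinct covers produce distinct monomials $\prod_k b_{\pi(k)-k}^{q^k}$, since the base-$q$ expansion of the exponent $\sum_{\pi(k)-k=m}q^k$ of each $b_m$ recovers $\pi$; hence a proper set $S$ carrying a \emph{unique} cover yields a single nonzero term, contradicting the vanishing of $\det B[S|S]$. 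Assuming $|T|\geq 2$, such an $S$ is produced in two cases: if some $t\in T$ has $e=\gcd(t,n)>1$ one takes the proper orbit $e\Z_n$ (on which only displacements in $T\cap e\Z_n$ survive, recursing on $\Z_{n/e}$); if every $t\in T$ is coprime to $n$ one fixes distinct $s,t\in T$, follows one $s$-arc then $t$-arcs back to $0$, and obtains a simple cycle of length $<n$ whose induced subdigraph carries no other cover. Either way a proper principal minor is nonzero, a contradiction; hence $T=\{j\}$ and $g(y)=by^{q^j}$.

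It remains to pin down $j$ and $b$. If $d=\gcd(n,j)>1$ then $y\mapsto by^{q^j}$ is $\F_{q^d}$-linear, so $\F_{q^d}$ would be a field of linearity of $L_W=L_U$, contradicting that $\F_q$ is the maximum field of linearity of the pseudoregulus $L_U$; thus $\gcd(n,j)=1$. Finally, $B$ is itself of single-cyclic-diagonal type, so $\det B=(-1)^{n-1}N_{q^n|q}(b)$ equals $\det A=(-1)^{n-1}N_{q^n|q}(a)$, whence $N_{q^n|q}(b)=N_{q^n|q}(a)$, as required.

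The main obstacle is the crux lemma, and within it the guarantee that the proper index set $S$ carries a \emph{unique} cyclic cover: if several covers of $S$ existed they could in principle cancel, so the two constructions (the subgroup orbit in the non-coprime case, and the one-$s$-then-$t$ cycle in the coprime case) must be chosen so that the induced subdigraph on $S$ admits exactly one cover. The Frobenius twist in the entries $b_{\pi(k)-k}^{q^k}$ is what rules out accidental cancellation once uniqueness is in hand, but establishing that uniqueness — equivalently, that no chord of $S$ lies in $T$ — is the delicate combinatorial step. As a cross-check, one can instead verify that the weight distribution of $L_W$ agrees with that of $L_U$ by Theorem \ref{main1}, so that $L_W$ consists entirely of points of weight $1$; this excludes the club and trace-type alternatives of Theorem \ref{pseudo} and confirms that only the monomial (pseudoregulus) shape survives.
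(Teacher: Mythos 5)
Your reduction is the same as the paper's: you observe that every proper principal minor of $A$ vanishes while $\det A\neq 0$, transfer this to $B$ via Theorem \ref{main3}, and then try to show that a Dickson matrix with this property must be supported on a single cyclic diagonal. The difference — and the problem — lies in how you establish that last step. The paper does it by a short induction on the \emph{interval} principal minors: with $j=\min\{h\colon b_h\neq 0\}$ and the inductive hypothesis $b_{n-1}=\cdots=b_{n-h+1}=0$, the submatrix $B[0,\ldots,j+h-1\,|\,0,\ldots,j+h-1]$ becomes lower triangular apart from the single entry $b_j$ in position $(0,j)$ and its shifts, so the minor collapses to the single monomial $b_j^{1+q+\cdots+q^{h-1}}b_{n-h}^{q^h+\cdots+q^{h+j-1}}$; its vanishing forces $b_{n-h}=0$. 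The choice of index sets makes the ``unique cover'' issue disappear by inspection.

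Your cycle-cover argument, by contrast, has a genuine gap exactly where you flag it. For the chosen proper set $S=\{0,s,s+t,\ldots\}$ you need $\det B[S|S]\neq 0$, and the expansion of this minor is a signed sum over \emph{all} permutations of $S$ whose displacements lie in the full support $T$, not just over those using $s$- and $t$-arcs. When $|T|\geq 3$ the induced digraph on $S$ can contain many chords, and even in the case $T=\{s,t\}$ chords occur (e.g.\ $n=5$, $s=2$, $t=1$ gives $S=\{0,2,3,4\}$ with the extra arcs $2\to4$ and $3\to0$). Your observation that distinct covers yield distinct monomials only shows the minor is not identically zero \emph{as a polynomial} in the $b_m$; it does not exclude numerical cancellation among several covers for the particular values at hand, so you really do need uniqueness of the cover, and that is nowhere proved. (The constraint $\sum_{x\in S}(\pi(x)-x)=0$ restricts the label multiset of an alternative cover but does not by itself rule one out.) The non-coprime branch can be closed more cheaply than you suggest — if $e=\gcd(t,n)>1$ then $B[e\Z_n|e\Z_n]$ is itself a Dickson matrix all of whose proper principal minors vanish, so by Theorem \ref{rank} it is either invertible (a nonzero proper principal minor of $B$, contradiction) or zero (contradicting $b_t\neq 0$) — but the coprime branch, which is the main case, is not established as written. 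The closing steps ($\gcd(j,n)=1$ via the maximum field of linearity, and $N_{q^n|q}(b)=N_{q^n|q}(a)$ from $\det A=\det B$) are fine and agree with the paper.
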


\begin{proof}
  Let $A$ be the matrix associated to $f(x)=ax^{q^i}$, then $A[h|k]\neq 0$ if and only if $k=h+i$. Then $\det A[\alpha|\alpha] \neq 0$ if and only if $\forall \, h \in \alpha$, $h+i \in \alpha$. Since $GCD(n,i)=1$, $\det A[\alpha|\alpha] \neq 0$ if and only if $\alpha=\Z_n$. Let $B$ be a Dickson matrix with the same principal minors as $A$. Then $b_0=0$ and $\det B[0,h|0,h]=b_hb_{n-h}^{q^{h}}=0$ $\forall \, h=1,2,\ldots,n-1$. Let $j=\min\{h \in \{1,2,\ldots,n-1\} \colon b_h \neq 0\}$. If $j=n-1$, then also $B$ has weight row 1. Let $j<n-1$. We prove by induction on $h$ that $b_{n-h}=0$ $\forall \, h=1,2,\ldots,n-j-1$. We have

  $$\det B[0,1,\ldots,j|0,1,\ldots,j]=b_j \det B[1,2,\ldots,j|0,1,\ldots,j-1].$$

\noindent The matrix $B[1,2,\ldots,j|0,1,\ldots,j-1]$ is lower triangular with diagonal $(b_{n-1}^q,b_{n-1}^{q^2},\ldots,b_{n-1}^{q^{j}})$, hence $b_{n-1}=0$. Assume that $b_{n-h'}=0$ $\forall \,h' <h $, then
   $$\det B[0,1,\ldots, j+h-1|0,1,\ldots,j+h-1]=$$

   $$b_j^{1+q+\ldots q^{h-1}}\det B[h,h+1,\ldots,h+j-1|0,1,\ldots,j-1]=$$
   $$b_j^{1+q+\ldots q^{h-1}}b_{n-h}^{q^h+q^{h+1}+\cdots +q^{h+j-1}},$$
hence $b_{n-h}$=0. Therefore, $B$ is a matrix with row weight 1. We observe that $\det A=N_{q^n|q}(a)\det C$ where $C$ is a circulant such $\det C=\displaystyle\prod_{i=0}^{n-1}\omega^i$ and $\omega$ is a primitive $n$th root of unit, hence  $\det A=N_{q^n|q}(a)$. In the same way, we obtain $\det B=N_{q^n|q}(b_j)$. By $\det A = \det B$ we get the statement.
\end{proof}

The two cases left to study are case $(3)$ of Theorem \ref{pseudo} and when $A$ is Loewy-reducible with respect to a partition $\{d\Z_n, \Z_n\setminus d\Z_n\}$ for some $1<d \mid n$. By Proposition \ref{divisor} and Theorem \ref{pseudo}, in both cases we can write $f(x)=f'(x)+\lambda \displaystyle\sum_{i=1}^{d-1}b_iTr_{q^n|q^d}(ax)^{q^i}$, for some $1<d \mid n$, $\lambda, a \in \F_{q^n}^*$, $b_i \in \F_{q^d}$ not all zero, $f'(x)$ $\F_{q^d}$-linear.
By replacing $U=\{(x,f(x)), x \in \F_{q^n}\}$ by $\lambda^{-1}U$, we can assume  $f(x)=f'(x)+\displaystyle\sum_{i=1}^{d-1}b_iTr_{q^n|q^d}(ax)^{q^i}$. The space $U$ contains the $(n-d)$-dimensional subspace

\noindent $U_d:=\{(x,f'(x)), x \in \F_{q^n} \colon Tr_{q^n|q^d}(ax)=0\}$, which is in fact an $\F_{q^d}$-vector space, hence every point of $L_{U_d}$ has weight multiple of $d$. Let $\xi \in \F_{q^n}$ be such that $Tr_{q^n|q^d}(a\xi)=1$,  then $\F_q$-subspace of $U$ $U_{\xi}:=\{(\xi x,f(\xi)x+\displaystyle\sum_{i=1}^{d-1}b_ix^{q^i}), x \in \F_{q^d}\}$ is contained in $\{(\xi x,f(\xi)x+y), x,y \in \F_{q^d}\} =\langle (\xi,f(\xi)),(0,1)\rangle_{\F_{q^d}}$ which, in turn, induces a subline $\ell_{\xi} \cong \PG(1,q^d)$. Clearly, $U=U_d\oplus U_{\xi}$. Let $u \in U_{\xi}\setminus \{\mathbf{0}\}$, then $U'=\la U_d, u \ra_{\F_{q^d}}$ is an $\F_{q^d}$-vector space of dimension $\frac{n}{d}$ and $L_{U'}$ is an $\F_{q^d}$-linear set of $\PG(1,q^n)$ maximum rank. Since $\dim_{\F_q} U'\cap U \geq n-d+1$, $L_{U'}\subset L_U$. Also, let $u_1, u_2 \in U_{\xi}\setminus \{\mathbf{0}\}$, $U_i:=\la U_d, u_i\ra_{\F_{q^d}}$, $i=1,2$, then, by Theorem \ref{hyperplane}, $L_{U_1}=L_{U_2}$ if and only if $U_1=U_2$. By $U_d\cap U_{\xi}=\mathbf{0}$, $U_1=U_2$ if and only if $u_2= \mu u_1$ for some $\mu \in \F_{q^d}^*$. Therefore,

$$
L_U=\displaystyle\bigcup_{u_i \in U_{\xi}\setminus \{\mathbf{0}\}} L_{\langle U_d, u_i \rangle_{\F_{q^d}}}
$$

\noindent and $L_{\langle U_d, u_1 \rangle_{\F_{q^d}}} = L_{\langle U_d, u_2 \rangle_{\F_{q^d}}}$ if and only if $u_1$ and $u_2$ determine the same point of the subline $\ell_{\xi}$.

Let $W_\xi = \{(\xi y, f(\xi)y +g(x)), y \in \F_{q^d}\}$ be such that $L_{W_{\xi}} = L_{U_{\xi}}$ and let $W = U_d \oplus W_{\xi}$. Then, $L_U = L_W$. If $W_{\xi} = \mu U_{\xi}$ for some $\mu \in \F_{q^d}$, then $W = \mu U$, as $\mu U_d = U_d$. If $W_{\xi} = \mu U_{\xi}^{\perp_d}$ for some $\mu \in \F_{q^d}$, then $W$ is the generalized perp of $U$ (see Introduction) and $W \neq \lambda U^{\perp}$ unless $\{(x,f(x)'), x \in \F_{q^n}\} = \lambda \{(x,f(x)'), x \in \F_{q^n}\}^{\perp}$ for some $\lambda \in \F_{q^n}^*$. If $L_{W_{\xi}} = L_{U_{\xi}}$ is a linear set of pseudoregulus type and $W_{\xi} \neq \mu U_{\xi}$, $W_{\xi} \neq \mu U_{\xi}^{\perp_d}$ for any $\mu \in \F_{q^d}$, then $W \neq \lambda U$ and $W \neq \lambda U^{\perp}$ for any $\lambda \in \F_{q^n}^*$. Suppose that $\displaystyle\sum_{i=1}^{d-1}b_iy^{q^i}=f''(y)+\displaystyle\sum_{j=1}^{e-1}b'_jTr_{q^d|q^e}(a'y)^{q^i}$ for some $ 1< e \mid d$, $f''(y)$ $\F_{q^e}$-linear, $b_j' \in \F_{q^e}$, $j=1,2,\ldots,e-1$, and $a' \in \F_{q^d}^*$, then

\noindent $f'(x)+\displaystyle\sum_{i=1}^{d-1}b_iTr_{q^n|q^d}(ax)^{q^i}= f'(x)+f''(Tr_{q^n|q^d}(ax))+\displaystyle\sum_{j=1}^{e-1}b_j' Tr_{q^d|q^e}(a'Tr_{q^n|q^d}(ax))^{q^j}=$

\noindent $f'''(x)+\displaystyle\sum_{j=1}^{e-1}b_j' Tr_{q^n|q^e}(a'ax)^{q^j}$, with $f'''(x)$ $\F_{q^e}$-linear, hence we should look on the reducibility of the matrix associated to $\displaystyle\sum_{j=1}^{e-1}b_j' y^{q^j}$ and we can reduce to one of the cases above described. In the last Theorem of this Section, we will prove that $L_U = L_W$ if and only if $W$ is in the form just described.

\begin{theorem}\label{last}
Let $U=\{(x,f'(x)+\lambda \displaystyle\sum_{i=1}^{d-1}b_iTr_{q^n|q^d}(ax)^{q^i}), x \in \F_{q^n}\}$, for some $1<d \mid n$, $\lambda, a \in \F_{q^n}^*$, $b_i \in \F_{q^d}$ not all zero, $f'(x)$ $\F_{q^d}$-linear. Then $L_U = L_W$ if and only if $W = U_d \oplus W_{\xi}$ and $L_{W_{\xi}} = L_{U_{\xi}}$ in $\ell_{\xi}$.
\end{theorem}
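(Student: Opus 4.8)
The plan is to establish the two implications separately: the forward one is immediate from the decomposition set up just before the statement, and the converse carries the real content.

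For the \emph{if} direction, assume $W=U_d\oplus W_\xi$ with $L_{W_\xi}=L_{U_\xi}$ in $\ell_\xi$. Exactly the computation that produced
\[
L_U=\bigcup_{u\in U_\xi\setminus\{\mathbf 0\}}L_{\langle U_d,u\rangle_{\F_{q^d}}}
\]
applies verbatim to $W$, giving $L_W=\bigcup_{w\in W_\xi\setminus\{\mathbf 0\}}L_{\langle U_d,w\rangle_{\F_{q^d}}}$. Since $L_{\langle U_d,\cdot\rangle_{\F_{q^d}}}$ depends only on the point of $\ell_\xi$ represented by its second generator (as already shown, $L_{\langle U_d,u_1\rangle}=L_{\langle U_d,u_2\rangle}$ iff $u_1,u_2$ give the same point of $\ell_\xi$), and $L_{W_\xi}=L_{U_\xi}$ means the two unions run over the same points of $\ell_\xi$, the unions coincide and $L_U=L_W$.

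For the \emph{only if} direction, assume $L_U=L_W$. The crucial first step is to show that $W$ contains a scalar multiple of the $\F_{q^d}$-subspace $U_d$; after replacing $W$ by $\mu^{-1}W$, which does not change $L_W$, we may then assume $U_d\subseteq W$. Let $A$ and $B$ be the Dickson matrices of $f$ and of the map $g$ defining $W$. By Theorem~\ref{main3}, $A$ and $B$ have equal corresponding principal minors, so in particular the blocks $A[d\Z_n|d\Z_n]$ and $B[d\Z_n|d\Z_n]$ do. By the shape of $f$ (cf.\ Proposition~\ref{divisor}), the trace summands contribute only to entries indexed by residues $\not\equiv 0\pmod d$, so $A[d\Z_n|d\Z_n]$ is exactly the Dickson matrix over $\F_{q^d}$ of the $\F_{q^d}$-linear map $f'$; the same formula shows $B[d\Z_n|d\Z_n]$ is the Dickson matrix over $\F_{q^d}$ of the $\F_{q^d}$-linear part $g'$ of $g$. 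Reading Theorem~\ref{main3} over the field $\F_{q^d}$, the equality of the principal minors of these two blocks says that the graphs of $f'$ and $g'$ define the same $\F_{q^d}$-linear set of $\PG(1,q^n)=\PG(1,(q^d)^{n/d})$; combined with Theorem~\ref{dlinear}, which forces every $\F_{q^d}$-line of $U_d$ to reappear (suitably scaled) inside $W$, and Theorem~\ref{main1}, which fixes the common weights, this pins the $\F_{q^d}$-part of $W$ down to $U_d$ up to a single scalar, as required.

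With $U_d\subseteq W$ arranged, the final step splits off the subline. Writing $\widetilde\ell_\xi=\langle(\xi,f(\xi)),(0,1)\rangle_{\F_{q^d}}$ for the $\F_q$-space carrying $\ell_\xi$, one checks $U_d\cap\widetilde\ell_\xi=\{\mathbf 0\}$ directly from $Tr_{q^n|q^d}(a\xi)=1$; since $\dim_{\F_q}U_d=n-d$, setting $W_\xi:=W\cap\widetilde\ell_\xi$ and verifying $\dim_{\F_q}W_\xi=d$ gives $W=U_d\oplus W_\xi$. Then $L_{W_\xi}$ and $L_{U_\xi}$ are $\F_q$-linear sets of the subline $\ell_\xi\cong\PG(1,q^d)$, and the decomposition of $L_U=L_W$ forces them to cover the same points; applying Theorem~\ref{hyperplane} over $\F_{q^d}$ (equivalently, the already-established injectivity $L_{\langle U_d,u_1\rangle}=L_{\langle U_d,u_2\rangle}\Leftrightarrow$ same point of $\ell_\xi$) upgrades this to $L_{W_\xi}=L_{U_\xi}$, finishing the proof. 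I expect the main obstacle to be precisely the consistency in the converse: Theorem~\ref{dlinear} controls one $\F_{q^d}$-line of $U_d$ at a time, each with its own scalar, whereas the conclusion needs a single scalar valid for all of $U_d$ simultaneously, and the device I would rely on to remove this ambiguity is the restriction of the equal-principal-minors condition to the block $d\Z_n$ together with Theorem~\ref{main3} read over $\F_{q^d}$.
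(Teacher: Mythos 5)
Your ``if'' direction is fine and coincides with the union decomposition the paper sets up before the statement. The converse, however, has a genuine gap at exactly the point you flag as the crux. You claim that the equality of the corresponding principal minors of $A[d\Z_n|d\Z_n]$ and $B[d\Z_n|d\Z_n]$, read through Theorem~\ref{main3} over $\F_{q^d}$, ``pins the $\F_{q^d}$-part of $W$ down to $U_d$ up to a single scalar.'' It does not: that equality only yields $L_{\{(x,f'(x))\}}=L_{\{(x,g'(x))\}}$ as $\F_{q^d}$-linear sets, and equality of linear sets does not force the underlying subspaces to be scalar multiples of one another --- that is the entire subject of this paper (pseudoregulus-type and perp pairs are counterexamples). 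The paper closes this gap with Theorem~\ref{hyperplane} applied over $\F_{q^d}$ to the \emph{pair} of coincidences $L_{\{(x,f'(x))\}}=L_{\{(x,g'(x))\}}$ and $L_{U_d}=L_{W_d}$, where $U_d$ and $W_d$ are the hyperplane sections cut by $Tr_{q^n|q^d}(ax)=0$ and $Tr_{q^n|q^d}(a'x)=0$; it is this two-level information that produces the single scalar $a'/a$. You invoke Theorem~\ref{hyperplane} only at the very end, for the subline, where it is not the tool that is needed.

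Two further steps are missing. First, everything above presupposes that $g$ has the shape $g'(x)+\sum_{i=1}^{d-1}c_i\,Tr_{q^n|q^d}(a'x)^{q^i}$ with, eventually, $c_i\in\F_{q^d}$; without this there is no candidate $a'$, no reason that $B[d\Z_n|d\Z_n]$ encodes ``the $\F_{q^d}$-linear part of $g$'' in a usable way, and no reason that $W\cap\widetilde\ell_\xi$ has dimension $d$ (Grassmann only gives $\dim_{\F_q}(W\cap\widetilde\ell_\xi)\geq 2d-n$, which is negative). The paper derives this form from Theorem~\ref{dlinear} (which supplies an $(\tfrac{n}{d}-1)$-dimensional $\F_{q^d}$-subspace $W_d\subseteq W$) together with the observation that each component $h_i$ of $g-g'$ must vanish on $W_d$ and hence factor through a trace. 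Second, your closing claim that ``the decomposition of $L_U=L_W$ forces them to cover the same points'' is unjustified: the constituents $L_{\langle U_d,u\rangle_{\F_{q^d}}}$ overlap, and a point of $\ell_\xi$ may lie in $L_U$ without lying in $L_{U_\xi}$, so neither the union nor $L_U\cap\ell_\xi$ determines $L_{U_\xi}$. The paper avoids this entirely by comparing the $d\times d$ Dickson matrices $A'$ and $B'$ built from the $b_i$ and $c_i$: the principal minors of $A[0,\ldots,d-1|0,\ldots,d-1]$ and $B[0,\ldots,d-1|0,\ldots,d-1]$ factor as $\prod_{i\in I}a^{q^i}$ times those of $A'$ and $B'$, so $A'$ and $B'$ have equal corresponding principal minors and Theorem~\ref{main3}, applied on the subline, gives $L_{U_\xi}=L_{W_\xi}$ directly.
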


\begin{proof}
Let $W=\{(x,g(x)), x \in \F_{q^n}\}$ and let $B$ be the matrix associated to $g(x)$. Let $g(x)=g'(x)+h(x)$, where $g'$ is the linear function associated to $B[d\Z_n|d\Z_n]$, hence $g'(x)$ is $\F_{q^d}$-linear. By Theorem \ref{dlinear}, $W$ contains an $\F_{q^d}$-vectors space of dimension $\frac{n}{d}-1$, say $W_d$. Let $h(x)=\displaystyle \sum_{i=1}^{d-1}h_i(x)^{q^i}$, where $h_i(x)$ is $\F_{q^d}$-linear, for $i=1,2,\ldots,d-1$. We have $g(\lambda x)= g'(\lambda x)+\displaystyle \sum_{i=1}^{d-1}\lambda^{q^i}h_i(x)^{q^i}=\lambda g(x)=\lambda g'(x)+\lambda \displaystyle \sum_{i=1}^{d-1}h_i(x)^{q^i}$ for every $\lambda \in \F_{q^d}$, $x \in W_d$, hence $h_i(x)=0$ for every $x \in W_d$,  $i=1,2,\ldots,d-1$. Hence we get $g(x)=g'(x)+\displaystyle\sum_{i=1}^{d-1}c_iTr_{q^n|q^d}(a'x)^{q^i}$, with $a'\neq 0$. Since $A[d\Z_n|d\Z_n]$ and $B[d\Z_n|d\Z_n]$ have the same principal minors and $L_{U_d}=L_{W_d}$, by Theorem \ref{hyperplane},
\medskip

\noindent $B[d\Z_n|d\Z_n]= diag(\frac{a'}{a},(\frac{a'}{a})^q,\ldots,(\frac{a'}{a})^{q^{n-1}})^{-1} \, A[d\Z_n|d\Z_n] \, diag(\frac{a'}{a},(\frac{a'}{a})^q,\ldots,(\frac{a'}{a})^{q^{n-1}})$, hence, by substituting $W$ by $\frac{a'}{a}W$ and renaming the $c_i'$s, we get $g(x) = f(x)'+$ $\displaystyle\sum_{i=1}^{d-1}c_iTr_{q^n|q^d}(ax)^{q^i}$. Let $A'$ be the $d\times d$ matrix such that $A'[i|j]=b_{j-i}^{q^i}$, $i,j=0,1,\ldots,d-1$, and $B'$ be such that $B'[i|j]=c_{j-i}^{q^i}$, $i,j=0,1,\ldots,d-1$. The matrices

\noindent$A[0,1,\ldots,d-1|0,1,\ldots,d-1]$ and $B[0,1,\ldots,d-1|0,1,\ldots,d-1]$ have equal corresponding principal minors, hence $\displaystyle\prod_{i \in I}a^{q^i}\det A'[I|I]= \displaystyle\prod_{i \in I}a^{q^i}\det B'[I|I]$ for every subset $I$ of $\{0,1,\ldots,d-1\}$. Since $a\neq 0$, $A'$ and $B'$ have equal corresponding principal minors too. Hence

\noindent $\{(x,\displaystyle\sum_{i=1}^{d-1}b_ix^{q^i}), x \in \F_{q^d}\}$ and $\{(y,\displaystyle\sum_{i=1}^{d-1}c_iy^{q^i}), y \in \F_{q^d}\}$ determine the same set of points. Since $b_i \in \F_{q^d}$ $\forall \, i=1,2,\ldots,d-1$, we get that $\displaystyle\sum_{i=1}^{d-1}c_iy^{q^i-1} \in \F_{q^d}$ for every $y \in \F_{q^d}$, and hence $c_i \in \F_{q^d}$ $\forall \, i=1,2,\ldots,d-1$. By $L_{U_{\xi}}=L_{W_{\xi}}$, we get $L_U=L_W$.

\end{proof}

The construction of generalized perp or generalized pseudoregulus linear sets can be easily generalized for linear sets of different ranks in $\PG(r-1,q^n)$. Let $n$ not a prime and $1< d \mid n$.  Let $U_d$ be an $\F_{q^d}$ vector space of dimension $m \leq (r-1)\frac{n}{d}-1$ and let $U_1,W_1$ $\F_q$ vector spaces of dimension $d$ disjoint from $U_d$ such that they determine the same linear set $L_{U_1} = L_{W_1}$ of a subline $\ell \cong \PG(1,q^d)$ with $W_1 = \mu U_1^{\perp_d}$ for some $\mu \in \F_{q^d}^*$ or  $L_{U_1} = L_{W_1}$ is a linear set of psudoregulus type. Then $U:= U_d \oplus U_1$ and $W:= U_d \oplus W_1$ are $\F_q$ vector spaces of dimension $(m+1)d$ and $L_U = L_W$ with $W$ not necessarily equal to $\lambda U$ for some $\lambda \in \F_{q^n}^*$. We stress  that for $r > 2$, we can take also $d=n$. In fact, in \cite{marino},  it was already pointed out that a linear set of $\PG(r-1,q^n)$, $r > 2$  that is a cone with vertex a projective subspace (hence $\F_{q^n}$-linear) and base a linear set of pseudoregulus type, can be determined by vector spaces which are not one multiple of the other.

We conclude with a characterization of linear sets of maximum rank of $\PG(r-1,q^n)$ $U$ and $W$ such that $L_U=L_W$ and there exists at least a line intersecting $L_U=L_W$ in a linear set of pseudoregolus type.

\begin{theorem}
Let $U$ and $W$ be $\F_q$-vector spaces such that $\dim_{\F_q} U=\dim_{\F_q}W=(r-1)n$ and $L_U = L_W$ in $\PG(r-1,q^n)$. Let $U_0$ and $W_0$ be the intersection of $U$ and $W$ respectively with a line $\ell$ of $\PG(r-1,q^n)$, such that $\dim_{\F_q} U_0 = \dim_{\F_q} W_0 =n$ and$L_{U_0} = L_{W_0}$ is a linear set of pseudoregulus type with maximum field of linearity $\F_q$. Then either $W=\lambda U$ for some $\lambda \in \F_{q^n}^*$, or $L_U=L_W$ is cone with vertex a codimension 2 subspace of $\PG(r-1,q^n)$ disjoint from $\ell$ and base $L_{U_0}=L_{W_0}$.
\end{theorem}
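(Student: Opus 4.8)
The plan is to isolate, inside $U$ and $W$, the largest $\F_{q^n}$-subspace each contains, to use it as the candidate vertex, and then to reduce everything to the case where that subspace is trivial. Concretely, I would first fix coordinates with $\ell=\la e_0,e_1\ra$ and $P_\infty=(0,\dots,0,1)\notin L_U=L_W$, so that $U=\{(x_0,\dots,x_{r-2},\sum_k f_k(x_k))\}$ and similarly for $W$ with maps $g_k$; on $\ell$ the pseudoregulus normal form gives $U_0=\{(x,ax^{q^i},0,\dots,0)\}$ with $\gcd(n,i)=1$, while the $r=2$ classification of pseudoregulus linear sets gives $W_0=\{(y,by^{q^j},0,\dots,0)\}$ with $\gcd(n,j)=1$. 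Put $\hat V:=\{v\in U:\la v\ra_{\F_{q^n}}\subseteq U\}$; since $U$ is additive this is the largest $\F_{q^n}$-subspace contained in $U$, and its points are exactly the weight-$n$ points of $L_U$. Applying Theorem~\ref{dlinear} with $d=n$ (and, by symmetry, with the roles of $U,W$ exchanged) shows $\la u\ra_{\F_{q^n}}\subseteq U\iff\la u\ra_{\F_{q^n}}\subseteq W$, so the same $\hat V$ arises from $W$. Because every point of $L_{U_0}$ has weight $1$, $\hat V$ meets the plane $V_\ell=\la e_0,e_1\ra_{\F_{q^n}}$ trivially; hence $\hat V$ is disjoint from $\ell$ and $\dim_{\F_{q^n}}\hat V\le r-2$.

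If $\dim_{\F_{q^n}}\hat V=r-2$, then counting $\F_q$-dimensions in $\hat V\oplus U_0\subseteq U$ (the sum being direct as $\hat V\cap V_\ell=0$) forces $U=\hat V\oplus U_0$ and likewise $W=\hat V\oplus W_0$; a direct check then gives $L_U=\bigcup_{Q\in L_{U_0}}\la\mathbb P(\hat V),Q\ra$, the cone with vertex $\mathbb P(\hat V)$ of projective codimension $2$, disjoint from $\ell$, and base $L_{U_0}=L_{W_0}$ — the second alternative. Otherwise $\dim_{\F_{q^n}}\hat V\le r-3$, and I would pass to the quotient $V/\hat V$: this preserves $L_U=L_W$ and the pseudoregulus on the image of $\ell$, makes the largest common $\F_{q^n}$-subspace zero, and keeps $\dim>2$. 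Since $\hat V\subseteq W$ and $\hat V\subseteq\lambda U$, once the reduced configuration yields $\overline W=\lambda\overline U$ the two spaces already agree modulo $\hat V$, and therefore $W=\lambda U$.

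It then remains to settle the base case $\hat V=0$ with $r\ge3$, where no cone can occur because $L_U$ has no weight-$n$ point. Here I would invoke Theorem~\ref{main2}: with Dickson matrices $A_0,\dots,A_{r-2}$ and $B_0,\dots,B_{r-2}$, all corresponding mixed minors coincide. The pseudoregulus on $\ell$ makes this block rigid — $f_1$ is bijective, so $A_1$ is invertible, and $f_0=-f_1(a(\cdot)^{q^i})$ identifies $A_0$ with the product of $A_1$ and the invertible monomial Dickson matrix $M$ of $ax^{q^i}$ (full-cycle because $\gcd(n,i)=1$), and similarly for $B_0,B_1$. The heart of the proof is to feed this rigidity into the minor identities and deduce that a single $\lambda$ satisfies $B_k=D^{-1}A_kD$ with $D=\mathrm{diag}(\lambda,\dots,\lambda^{q^{n-1}})$ for every $k$, i.e. $W=\lambda U$; the hypothesis $\hat V=0$ is precisely what excludes the transposed (``perp'', $j=n-i$) alternative, which would create a nonzero vertex. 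This propagation of the minor conditions from the $\ell$-block to the remaining columns — a structured, vertex-free strengthening of Theorems~\ref{sim1}--\ref{sim2} for Dickson matrices constrained by a pseudoregulus — is the step I expect to be the main obstacle.
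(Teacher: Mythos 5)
Your reduction framework is sound and in places more careful than the paper's own write-up: identifying $\hat V$ (the maximal $\F_{q^n}$-subspace of $U$, equivalently the span of the weight-$n$ points, which coincides for $U$ and $W$ by Theorem \ref{main1}) as the candidate vertex, observing $\hat V\cap V_\ell=0$ because $L_{U_0}$ has only weight-one points, handling $\dim\hat V=r-2$ by a direct sum count, and quotienting by $\hat V$ otherwise, all check out. The paper instead simply reduces to $r=3$ by a coordinate choice and asserts the general case ``easily follows''.

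However, there is a genuine gap: the entire mathematical content of the paper's proof is precisely the step you defer as ``the main obstacle'', namely propagating the equal-minor conditions of Theorem \ref{main2} from the $\ell$-block to the other coordinate. You state the goal ($B_k=D^{-1}A_kD$ for a single $\lambda$) but give no argument for it, so the proposal proves only the easy structural shell around the theorem. For the record, the paper's argument (in coordinates where $U=\{(x,y,ax^{q^i}+f_1(y))\}$, $W=\{(x,y,bx^{q^j}+g_1(y))\}$, $f_1(y)=\sum_h a_hy^{q^h}$, $g_1(y)=\sum_h b_hy^{q^h}$) exploits the fact that the Dickson matrix of $ax^{q^i}$ has columns of weight one supported on a single full cycle of $\Z_n$: a mixed principal minor taking exactly one column from the $f_1$-block and the rest from the $f_0$-block is not identically zero only when the index set is $\{0\}\cup\{n-i,\dots,n-di\}$, and then it isolates the single coefficient $a_{n-di}$. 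Comparing with the analogous minors for $W$ forces either $i=j$ and $a_h=b_h$ for all $h$ (leading to $W=\lambda U$), or $i\neq j$ and $a_h=b_h=0$ for all $h\neq 0$, i.e.\ $f_1(y)=g_1(y)=cy$, which is exactly the cone case. Without this computation (or an equivalent one), the dichotomy in the statement is not established; note also that your heuristic that ``$\hat V=0$ excludes the transposed alternative'' is a consequence of this computation, not a substitute for it. A secondary quibble: your normalization $\ell=\la e_0,e_1\ra$ forces $f_1$ to be bijective and $f_0=-f_1(a(\cdot)^{q^i})$, which needlessly entangles the two blocks; the paper's choice of $\ell$ as the line $y=0$ keeps $A_0$ equal to the monomial Dickson matrix itself and makes the minor analysis tractable.
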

\begin{proof}
We prove the result for $r=3$, then it will easily follow for any $r \geq 3$.
Let $U=\{(x,y,f_0(x)+f_1(y)), x, y \in \F_{q^n}\}$, $W=\{(x,y,g_0(x)+g_1(y)), x, y \in \F_{q^n}\}$.
By the action of PGL$(3,q^n)$, we can assume that $f_0(x)=ax^{q^i}$. Then $g_0(x)=bx^{q^j}$ with $GCD(j,n)=GCD(i,n)=1$, $N_{q^n|q}(a) = N_{q^n|q}(b)$.  Let $A$ and $B$ be the matrices associated to $f_0$ and $g_0$ respectively, $A'$ such that $A'[h||0]=a_h$ $\forall \, h \in \Z_n$, $A'[h|k]=A[h|k]$ for every $k>0$, $h \in \Z_n$, $B'[h|0]=b_h$ $\forall \, h \in \Z_n$, $B'[h|k]=B[h|k]$ for every $k>0$, $\forall \, h \in \Z_n$. Suppose that $A'$ and $B'$ have equal corresponding principal minors. Since the columns of $A'$ distinct from the 0-th one have weight 1, we have that $\det A'[\{0\}\cup \alpha |\{0\}\cup \alpha ]$ is not identically 0 only if $\forall \, h \in \alpha, h+i \in \{0\}\cup \alpha$, hence $\alpha=\{n-i,n-2i,\ldots,n-di\}$ and $\det A'[\{0\}\cup \alpha |\{0\}\cup \alpha ]=a_{n-di}$. Since the subgroup of $(\Z_n,+)$ generated by $i$ is $\mathbb{Z}_n$, if $i=j$, then $a_h=b_h$ $\forall h=1,2,\ldots,n-1$.
  If $i \neq j$, then $\{i,2i,\ldots,di\} \neq \{j,2j,\ldots, dj\}$ unless $d=n$, hence $a_h=b_h=0$ $\forall h=1,2,\ldots,n-1$. Hence, by Theorem \ref{main2}, $f_1(y)=g_1(y)=cy$, so $L_U=L_W$ is a cone with vertex $(0,1,c)$ and base a linear set of pseudoregulus type.
\end{proof}


\bigskip
	\noindent
	Valentina Pepe\\
	Dipartimento di Scienze di Base ed Applicate per l' Ingegneria,\\
	``Sapienza" Universit\`{a}
	di Roma, \\
	Via Antonio Scarpa, 10, 00161 Roma, Italy\\
	{\em valentina.pepe@uniroma1.it}

\end{document}